\numberwithin{equation}{section}
\newtheorem{theorem}{Theorem}[section]
\newtheorem{lemma}[theorem]{Lemma}
\newtheorem{corollary}[theorem]{Corollary}
\newtheorem{proposition}[theorem]{Proposition}
\newtheorem{definition}[theorem]{Definition}
\newtheorem{remark}[theorem]{Remark}
\newcommand{\rd}{\mathrm{d}}
\newcommand{\bR}{\mathbb{R}}
\newcommand{\Sd}{\mathbb{S}}
\newcommand{\vs}{v_{*}}
\newcommand{\am}{a_{-}}
\newcommand{\ap}{a_{+}}
\newcommand{\xie}{\xi_{e}}
\newcommand{\p}{\varphi}
\newcommand{\K}{\mathcal{K}}
\newcommand*{\im}{\mathop{}\!\mathrm{i}}
\newcommand*{\e}{\mathop{}\!\mathrm{e}}
\begin{document}

\title{On the Measure Valued Solution to the Inelastic Boltzmann Equation with Soft Potentials}

\author{Kunlun Qi\footnote{Department of Mathematics, City University of Hong Kong, Hong Kong, People's Republic of China (kunlun.qi@my.cityu.edu.hk).}}   

\maketitle


\begin{abstract}
	The goal of this paper is to extend the existence result of measure-valued solution to the Boltzmann equation in elastic interaction, given by Morimoto-Wang-Yang, to the inelastic Boltzmann equation with moderately soft potentials, also as an extensive work of our preceding result in the inelastic Maxwellian molecules case.
	We prove the existence and uniqueness of measure-valued solution under Grad's angular cutoff assumption, as well as the existence of non-cutoff solution, for both finite and infinite energy initial datum, by a delicate compactness argument. 
	In addition, the moments propagation and energy dissipation properties are justified for the obtained measure-valued solution as well.
\end{abstract} 

{\small 
	{\bf Key words.} Boltzmann Equation, Measure valued solution, Fourier transform, Non-cutoff assumption, Inelastic, Probability measure, Soft potentials.
	
	{\bf AMS subject classifications.} Primary 35Q20, 76P05; Secondary  35H20, 82B40, 82C40.
	
}

\tableofcontents

\section{Introduction}
\label{sec:intro}

Consider the inelastic spatially homogeneous Boltzmann equation in $ \mathbb{R}^{3} $,
\begin{equation}\label{IB}
\partial_{t} f(t,v) = Q_{e}(f,f) (t,v)
\end{equation}
associated with the non-negative initial condition
\begin{equation}\label{F0}
f(0,v) = F_{0},
\end{equation}
where the unknown density function $ f=f(v,t) $ is regarded as the density function of a probability distribution, or more generally, a probability measure; and the initial datum $ F_{0} $ is also assumed to be a non-negative probability measure on $ \mathbb{R}^{3} $. In this case, the inelastic Boltzmann collision operator can be written as
\begin{equation}
Q_{e}(g,f) = \int_{\bR^{3}} \int_{\Sd^{2}} B\left(|v-v_{*}|, \frac{v-v_{*}}{|v-v_{*}|} \cdot \sigma\right) \left[ J g('\vs)f('v)- g(\vs)f(v) \right] \,\rd \sigma \,\rd \vs,
\end{equation}
where the relations between the pre-collisional velocities $ 'v, '\vs $ and the corresponded post-collisional velocities $ v,\vs $ are given as follows:
\begin{equation}
\left\{
\begin{array}{lr}
'v = \frac{v+\vs}{2} - \frac{1-e}{4e}(v-\vs) + \frac{1+e}{4e}|v-\vs|\sigma  &  \\
'\vs = \frac{v+\vs}{2} + \frac{1-e}{4e}(v-\vs) - \frac{1+e}{4e}|v-\vs|\sigma, &  
\end{array}
\right.
\end{equation}
note that $ e\in (0,1] $ is the so-called restitution coefficient, describing the inelastic affects in the collision between particles ($ e=1 $ denotes elastic collision) as in \cite{Villani2006granularmaterials}, which is assumed to be a constant in this paper; then, since the following discussion will be based on the weak formulation of the collision operator, here we generally denote $ J $ as the Jacobian of the transform from $ (v,\vs) $ to $ ('v ,'\vs) $, and one can refer to \cite{WeiZhang2012} for more detailed representation forms in non-constant restitution coefficient case.


\textit{1.1 Collision kernel.}

Here we consider the non-negative collision kernel $ B\left(|v-v_{*}|, \frac{v-v_{*}}{|v-v_{*}|}\cdot \sigma\right) $, for any unit vector $ \sigma\in\mathbb{S}^{2} $, which implies that $ B $ does not only depend on the deviation angle, but also the relative velocity, 
\begin{equation}\label{Bb}
B\left(|v-v_{*}|, \frac{v-v_{*}}{|v-v_{*}|}\cdot \sigma\right) = \Phi\left(|v-v_{*}|\right) b\left( \cos\theta \right), \quad \cos\theta =\frac{v-v_{*}}{|v-v_{*}|}\cdot \sigma, 
\end{equation}
where $ \Phi\left(|v-v_{*}|\right) = \left| v-v_{*} \right|^{\gamma} $ for some $ \gamma > -3 $. As in the elastic case, the $ \gamma > 0 $ case is called the hard potential, the $ -2 <  \gamma < 0 $ case is called the moderately soft potential, the $ -3 < \gamma < -2 $ case is called strongly soft potential, and the case $ \gamma = 0 $ refers to the Maxwellian molecule.\\
In addition, the angular collision part $b(\cos\theta)$ is an implicitly defined function, asymptotically behaving as, when $\theta\rightarrow 0^{+}$,
\begin{equation}\label{noncutoffnu}
\sin \theta b(\cos\theta) \big|_{\theta\rightarrow 0^{+}} \sim K\theta^{-1-2s}, \quad \text{for}\ 0<s<1 \ \text{and} \  K >0,
\end{equation}
i.e., it has a \textit{non-integrable singularity} when the deviation angle $ \theta $ is small. 
The range of deviation angle $ \theta $, namely the angle between pre- and post-collisional velocities, is a full interval $ \left[0,\pi\right] $, but it is customary to restrict it to $ \left[0,\pi/2\right] $ mathematically, replacing $ b(\cos\theta) $ by its ``symmetrized" version \cite{morimoto2016measure}:
\begin{equation}
\left[ b(\cos\theta) + b(\cos\left(\pi-\theta\right))\right]\mathbf{1}_{0\leq \theta \leq \frac{\pi}{2}}.
\end{equation}
As it has been long known, the main difficulty in establishing the well-posedness result for Boltzmann equation is that the singularity of the collision kernel $ b $ is not locally integrable in $ \sigma\in\Sd^{2} $. To avoid this, Harold Grad proposed the integrable assumption \cite{GradCutoff} on the collision kernel $ b $ by a ``cutoff " near singularity; however, here the full singularity condition for the collision kernel with \textit{non-cutoff assumption} is considered
\begin{equation}\label{noncutoffb}
\exists \alpha_{0} \in (0,2], \quad \text{such that} \quad \int_{0}^{\frac{\pi}{2}} \sin^{\alpha_{0}}\left(\frac{\theta}{2}\right) b(\cos\theta) \sin\theta \,\rd\theta < \infty,
\end{equation}
which can handle the strongly singular kernel $ b $ in \eqref{noncutoffnu} with some $ 0 < s < 1 $ and $ \alpha_{0}\in(2s, 2 ] $.

\textit{1.2 Weak formulation of the inelastic operator.}

%

For many purposes, it will be more convenient to work with the weak formulation\cite{GambaDiffusively}, which is quite natural form from the physical view of point like the Maxwellian molecule case in our precedent work \cite{Qi20} that: letting $ \psi(v) $ be a test function, and then by changing of variable and the symmetry argument, we have
\begin{equation}\label{weak}
\begin{split}
&\int_{\bR^{d}} Q_{e}(f,f)(v)\psi(v) \rd v\\
=& \int_{\bR^{3}} \int_{\bR^{3}} \int_{\Sd^{2}} b\left(\frac{v-\vs}{|v-\vs|}\cdot \sigma\right)\Phi \left(|v-v_{*}|\right) f(\vs) f(v) \left[ \psi(v')  - \psi(v) \right] \,\rd \sigma \,\rd \vs \,\rd v\\
=& \frac{1}{2}\int_{\bR^{3}} \int_{\bR^{3}} \int_{\Sd^{2}} b\left(\frac{v-\vs}{|v-\vs|}\cdot \sigma\right) \Phi\left(|v-v_{*}|\right) f(\vs) f(v) \left[ \psi(v') + \psi(\vs') - \psi(v) - \psi(\vs) \right] \,\rd \sigma \,\rd \vs \,\rd v
\end{split}
\end{equation}
such that the particular form of the inelastic collision law is only manifested in the test function $ \psi(v') $ and $ \psi(\vs') $, where the post-collisional velocities $ v', v'_{*} $ (with $ v,\vs $ taken as the pre-collisional velocities) are 
\begin{equation}\label{ve}
\left\{
\begin{array}{lr}
v' =  \frac{v+\vs}{2} + \frac{1-e}{4}(v-\vs) + \frac{1+e}{4}|v-\vs|\sigma &\\
v'_{*} = \frac{v+\vs}{2} - \frac{1-e}{4}(v-\vs) - \frac{1+e}{4}|v-\vs|\sigma.&
\end{array}
\right.
\end{equation}

\textit{1.3 Conservative and dissipative law.}

Furthermore, we also introduce another type of representation for the post-collisional velocities $ v'$ and $\vs' $, that is called the $ \omega $-form, 
\begin{equation}
\left\{
\begin{array}{lr}
v' = v - \frac{1+e}{2} \left[ (v-\vs)\cdot \omega \right]\omega &  \\
\vs' = \vs + \frac{1+e}{2} \left[ (v-\vs)\cdot \omega \right]\omega, &  
\end{array}
\right.
\end{equation}
from which, we can easily verify the conservation of momentum and dissipation of energy:
\begin{equation}
v+\vs = v'+\vs'; \quad |v'|^{2} + |\vs'|^{2} - |v|^{2} - |\vs|^{2} = -\frac{1-e^{2}}{2} \left[ (v-\vs)\cdot \omega \right] \leq 0.
\end{equation}
Note that in this case $ '\vs, 'v $ do not coincide with $ \vs', v' $ since the inelastic collisions are not revertible. Also note that,
\begin{equation}\label{QEcon}
\int_{\bR^{d}} Q_{e}(f,f)(v) \,\rd v = 0, \quad \int_{\bR^{d}} Q_{e}(f,f)(v) v \,\rd v = 0
\end{equation}
but
\begin{equation}\label{QEdiss}
\int_{\bR^{d}} Q_{e}(f,f)(v) |v|^{2} \,\rd v \leq 0.
\end{equation}

\section{Main Results}

\subsection{Motivation}
In recent years, the inelastic Boltzmann equation has been widely applied in the study of evolutionary phenomenon for granular gases, where the interactions are described by inelastic collisions; however, the mathematical kinetic theory of granular gases, especially for the general and physical non-cutoff model, is still restrictive. In the framework under cutoff assumption, Bobylev-Carrillo-Gamba have initially established the well-posed theory of the three dimensional inelastic Boltzmann equation with Maxwellian molecules in \cite{Bobylev2000inelastic}, and Mischler-Mouhot have further systematically study the inelastic hard sphere model in their series of work \cite{MM2006hardsphere1,MM2006hardsphere2}. Besides, to overcome the internally diffusive-driven property of the inelastic Boltzmann collision operator, a heat bath term is frequently added to keep the system out of the ``freezing" state, and there are lots of work devoted to this kind of model, see \cite{GambaDiffusively,BCT2006,MengLiu2020} and the references therein. More recently, not only for the constant restitution coefficients, some results of inelastic Boltzmann equation with non-constant restitution coefficient have been extended by Alonso and collaborators in \cite{Alonso2009, AL2010, AL2014CMP, ABCL2018}. And we refer to a recent textbook \cite{Brilliantov2004} by Brilliantov-P{\"o}schel for the developments from physical view of point, as well as a review paper \cite{Villani2006granularmaterials} by Villani for more mathematical progress in this field.

On the other hand, the theory about finding the measure-valued solution to the elastic Boltzmann equation has been well-developed in the past decade; since $ f $ in \eqref{IB} itself is density function, it is pretty natural to consider the measure-valued solution if the initial datum $ F_{0} $ is a probability measure as well. Initiated from the probability metric proposed by Toscani-Villani in \cite{ToscaniVillani1999}, Cannone-Karch in \cite{cannone2010infinite} further made pioneered contribution to obtain the well-posedness in the probability measure space of elastic Boltzmann equation with Maxwellian molecules, which doesn't exclude infinite energy case but only handles the mild singularity of non-cutoff collision kernel. Fortunately, Morimoto extended their results to the strong singularity as well as proving some smoothing effect in \cite{morimoto2012remark}. Besides, Lu-Mouhot showed existence of weak measure-valued solution without cutoff assumption for hard potential, having finite mass and energy, in \cite{LuMouhot2012,LuMouhot2015}. Moreover, Morimoto-Wang-Yang further studied the measure-valued solution in more general non-cutoff case (including finite energy and infinite energy initial datum, hard potential and soft potential), as well as the 
moments and smoothing  property in their series paper \cite{morimoto2016measure, MWY2015,CMWY2016}. 

Hence, considering the classical results above, our contribution in this paper is to establish the systematic existence theory of the measure-valued solution to the inelastic Boltzmann equation with soft potentials, where very few results are available to our best knowledge except for some relevant external-forced model \cite{WeiZhang2012} or Vlasov-Poisson-Boltzmann System \cite{CH2014}, and \cite{ETB2006} is referred for more detailed physical motivation for this model. In fact, we have already studied the Maxwellian molecules case in our preceding work \cite{Qi20}, based on which, here for more general collision kernel with soft potential, the existence and uniqueness of measure-valued solution under cutoff assumption are proved by Banach Contraction Theorem, and the existence of non-cutoff solution, in cases of both finite and infinite energy initial datum, is further established by a careful compactness argument. It is worth noting that, in contrast with the Maxwellian molecules case, the main difficulties lie in dealing with the soft potential collision kernel, which has singularity nearby the origin point; to achieve this, another cutoff function of soft potential that has worked for the elastic case \cite{morimoto2016measure} is introduced again, and will play a key role in handling the trouble-maker. What's more, the moments propagation property is shown by taking the similar strategy as the elastic case; and the energy dissipation feature, which is distinct for the inelastic model, is also justified by the delicate analysis. At last, the additional reason that motivates us to study the existence theory for the inelastic model is our recent progress \cite{HM19, HQ2020, HQY20} in the numerical approximation of Boltzmann collision operator; having the existence results in hand, we can further logically and heuristically apply them in the numerical simulation as well as the stability analysis of corresponded numerical solution.

\subsection{Main Theorems}
In this paper, we still try to look for the solution which is a probability measure $ F_{t} $ for any $ t \geq 0 $, \textit{i.e.}, $ F_{t} \in C\left([0,\infty), P_{\alpha}(\mathbb{R}^{3})  \right) $, where $ P_{\alpha}(\mathbb{R}^{3}) $ is the set of probability measures on $ \mathbb{R}^{3} $ with finite moments up to the order $ \alpha\in[0,2] $, which implies the possible existence of infinite energy solution, more precisely,
\begin{equation}\label{Palpha}
\begin{split}
P_{\alpha}(\mathbb{R}^{3}) = \{ F \in P_{0}(\mathbb{R}^{3})& \big|  \int_{\mathbb{R}^{3}} \,\rd F(v) =1, \ \int_{\mathbb{R}^{3}} |v|^{\alpha} \,\rd F(v) < \infty\\
&\text{and if}\ \alpha > 1, \int_{\mathbb{R}^{3}} v_{j} \,\rd F(v) = 0, \ j = 1,2,3 \}
\end{split}
\end{equation}
and 
the continuity of the map $ t\in [0,\infty)  \rightarrow F_{t} \in P_{\alpha}(\mathbb{R}^{3}) $ is in the weak sense that 
\begin{equation}
\lim\limits_{t \rightarrow t_{0}} \int_{\mathbb{R}^{3}} \psi \,\rd F_{t}(v) = \int_{\mathbb{R}^{3}} \psi \,\rd F_{t_{0}}(v), \quad \forall \psi \in C_{b}(\mathbb{R}^{3}),
\end{equation}
note that the space $ C_{b}(\mathbb{R}^{3}) $ includes all the continuous with certain type of decay condition at the infinity of $ v\in \mathbb{R}^{3} $, defined as following,
\begin{equation}
C_{b}(\mathbb{R}^{3}) : = \left\lbrace \psi \in C(\mathbb{R}^{3}); \quad  \sup_{v \in\mathbb{R}^{3}} \frac{\left| \psi(v)\right|}{\left\langle v \right\rangle^{\alpha} } < \infty, \quad \left\langle v \right\rangle = \sqrt{1+ |v|^{2}} \right\rbrace.
\end{equation}

In this paper, the main result is to show that the Cauchy problem of the inelastic Boltzmann equation admits a measure-valued solution in the case of moderately soft potential ($ -2 \leq \gamma < 0 $) for both finite and infinite energy initial datum. 

For the completeness, we first introduce the specific definition of measure-valued solution to the inelastic Boltzmann equation following the elastic definition in \cite{morimoto2016measure}.
\begin{definition}\label{measure}
	(Measure-valued solution to inelastic Boltzmann equation) Let $ e \in (0,1] $ and the collision kernel $ B $ satisfy the \eqref{Bb}-\eqref{noncutoffb}. For any $ F_{0} \in P_{\alpha}(\mathbb{R}^{3}) $ with $ 0 < \alpha \leq 2 $. We define $ F_{t} \in C\left([0,\infty), P_{\alpha}(\mathbb{R}^{3})  \right)  $ as a measure-valued solution to the Cauchy problem \eqref{IB}-\eqref{F0} if it satisfies:\\
	(1) For every $ \psi(v) \in C_{b}^{2}(\mathbb{R}^{3}) $ and $ t > 0 $,
	\begin{equation}
	\int_{0}^{t} \int_{\bR^{3}} \int_{\bR^{3}} \int_{\Sd^{2}} b\left(\frac{v-\vs}{|v-\vs|}\cdot \sigma\right) \left|v-v_{*}\right|^{\gamma} \left| \psi(v'_{*}) + \psi(v') - \psi(v_{*}) - \psi(v) \right| \,\rd \sigma \,\rd F_{\tau}(v) \,\rd F_{\tau}(v_{*}) \,\rd \tau 
	\end{equation}
	is finite.\\
	(2) For every $ \psi(v) \in C_{b}^{2}(\mathbb{R}^{3}) $,
	\begin{equation}\label{weakmeasure}
	\begin{split}
	\int_{\bR^{3}} \psi(v) \,\rd F_{t}(v) = &\int_{\bR^{3}} \psi(v) \,\rd F_{0} + \frac{1}{2} \int_{0}^{t} \int_{\bR^{3}} \int_{\bR^{3}} \int_{\Sd^{2}} b\left(\frac{v-\vs}{|v-\vs|}\cdot \sigma\right) \left|v-v_{*}\right|^{\gamma} [ \psi(v'_{*}) + \psi(v')\\
	& - \psi(v_{*}) - \psi(v) ] \,\rd \sigma \,\rd F_{\tau}(v) \,\rd F_{\tau}(v_{*}) \,\rd \tau 
	\end{split}
	\end{equation}
	(3) If $ \alpha \geq 1 $, then the momentum conservation law holds:
	\begin{equation}\label{momencon}
	\forall t \geq 0, \quad \int_{\bR^{3}} v_{j} \,\rd F_{t}(v) = \int_{\bR^{3}} v_{j}\,\rd F_{0}(v), \quad j=1,2,3.
	\end{equation}
	(4) If $ \alpha =2  $, then $ F_{t} \in C\left([0,\infty), P_{\alpha}(\mathbb{R}^{3})  \right)  $ and then energy dissipation law holds:
	\begin{equation}\label{enerdiss}
	\forall t \geq 0, \quad \int_{\bR^{3}} |v|^{2} \,\rd F_{t}(v) \leq \int_{\bR^{3}} |v|^{2} \,\rd F_{0}(v).
	\end{equation}
\end{definition}

\begin{remark}
	(i) Note that, compared with the elastic case, the post-collisional velocities $ v',v'_{*} $ above are described by \eqref{ve} in the inelastic collision process; (ii) The definition above does not require finite entropy condition, which is consistent with \cite[Definition 1.1]{morimoto2016measure}.
\end{remark}

As usual, we also defined the similar operator $ L^{e}_{B}[\psi](v,v_{*}) $ that, for any $ \psi\in C_{b}^{2}(\mathbb{R}^{3}) $,
\begin{equation}
L^{e}_{B}[\psi](v,v_{*}) := \int_{\Sd^{2}} b\left(\frac{v-\vs}{|v-\vs|}\cdot \sigma\right) \left|v-v_{*}\right|^{\gamma} \left[ \psi(v'_{*}) + \psi(v') - \psi(v_{*}) - \psi(v) \right] \,\rd \sigma.
\end{equation}

Now we're ready to state our main theorem:
\begin{theorem}\label{main1}
	Let $ e\in(0,1] $ and the collision kernel $ B $ satisfy the \eqref{Bb}-\eqref{noncutoffb} with $ -2 \leq  \gamma < 0 $. 
	
	(i) (Finite energy initial datum) For any initial datum $ F_{0} \in P_{2}\left(\mathbb{R}^{3}\right) $, then there exists a measure-valued solution $ F_{t} \in C\left(  \left[0,\infty\right), P_{2}\left( \mathbb{R}^{3} \right) \right) $ to problem \eqref{IB}-\eqref{F0}.
	
	(ii) (Infinite energy initial datum) For any initial datum $ F_{0} \in P_{\alpha}\left(\mathbb{R}^{3}\right) $ with $ \alpha \in \left( c_{\gamma,s} ,2 \right) $, where 
	\begin{equation}
	c_{\gamma,s} =
	\begin{cases}
	\max\{ \frac{\gamma}{2s}+ 1 , 0 \}   & \text{if} \ 0 < s < \frac{1}{2}, \\
	\max\{\gamma + 2s\} &  \text{if} \ \gamma+2s < 1 \ \text{and} \ \frac{1}{2} \leq s < 1, \\
	\frac{\gamma}{2s-1} + 2    & \text{if} \ \gamma+2s \geq 1.
	\end{cases}
	\end{equation}
	Then there exists a measure-valued solution $ F_{t} \in C\left(  \left[0,\infty\right), P_{\alpha}\left(\mathbb{R}^{3}\right) \right) $ to problem \eqref{IB}-\eqref{F0}.
\end{theorem}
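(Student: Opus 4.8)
\emph{Proof strategy.} The plan is to obtain $F_t$ as a limit of solutions of a doubly regularized equation, adapting the compactness scheme of Morimoto--Wang--Yang \cite{morimoto2016measure} to the inelastic collision law \eqref{ve} and to the singularity of the soft potential at $v=\vs$. For $n\in\mathbb N$ set $b_n(\cos\theta)=\min\{b(\cos\theta),n\}$, so that $b_n\uparrow b$ and each $b_n$ is integrable on $[0,\pi/2]$; for $\epsilon>0$ let $\Phi_\epsilon$ be a smooth bounded truncation of $|v-\vs|^\gamma$ with $\Phi_\epsilon(r)=r^\gamma$ for $r\ge\epsilon$ and $0\le\Phi_\epsilon(r)\le r^\gamma$ everywhere — this is the soft-potential cutoff of \cite{morimoto2016measure}, needed already to make the collision kernel bounded. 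For each fixed $(n,\epsilon)$ the kernel $b_n\,\Phi_\epsilon$ is bounded, the associated operator $Q_e^{n,\epsilon}$ is Lipschitz on $P_\alpha(\bR^3)$ for a suitable complete metric, and a Banach fixed-point (Picard iteration) argument yields a unique $F^{n,\epsilon}_t\in C([0,\infty),P_\alpha(\bR^3))$ solving the regularized weak equation; it conserves mass and momentum and dissipates energy because the $\omega$-form relations are insensitive to the truncations.

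\emph{Uniform a priori estimates.} The heart of the matter is a family of bounds independent of $(n,\epsilon)$. Splitting $|v-\vs|^\gamma$ over $\{|v-\vs|\le1\}$ and its complement and invoking a Povzner-type inequality adapted to the dissipative $\omega$-form, one estimates $\int_0^t\!\iint|L^{e,n,\epsilon}_B[\langle v\rangle^\alpha]|\,\rd F^{n,\epsilon}_\tau\,\rd F^{n,\epsilon}_\tau\,\rd\tau$ by the running $\alpha$-th moment and concludes, by Gr\"onwall's lemma, that $\sup_{[0,T]}\int\langle v\rangle^\alpha\,\rd F^{n,\epsilon}_t\le C(T,F_0)$. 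When $\alpha=2$, the energy bound $\int|v|^2\,\rd F^{n,\epsilon}_t\le\int|v|^2\,\rd F_0$ follows directly from the pointwise drop $|v'|^2+|v'_*|^2-|v|^2-|\vs|^2\le0$. The restriction $\alpha>c_{\gamma,s}$ in the infinite-energy regime is exactly the threshold under which this moment estimate closes, once the singular factor $|v-\vs|^\gamma$ has been balanced both against the cancellation coming from the $\rd\sigma$-integral (which vanishes like $\sin^2(\theta/2)$ near $\theta=0$, hence is integrable against \eqref{noncutoffb} for every $s<1$) and against the moment that is available. The same computation shows that $|L^e_B[\psi](v,\vs)|$ for $\psi\in C_b^2(\bR^3)$ is dominated by a kernel integrable against $\rd F^{n,\epsilon}_\tau\otimes\rd F^{n,\epsilon}_\tau$ uniformly in $(n,\epsilon)$ — here $\gamma>-2$ is used — so that $t\mapsto\int\psi\,\rd F^{n,\epsilon}_t$ is equicontinuous, uniformly in $(n,\epsilon)$.

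\emph{Compactness and passage to the limit.} The uniform moment bound yields tightness of $\{F^{n,\epsilon}_t\}$ (Prokhorov); together with the equicontinuity in time and a diagonal extraction over a countable $C_b^2$-dense family of test functions, an Arzel\`a--Ascoli argument produces a subsequence converging in $C([0,T],P_\alpha(\bR^3))$ — the weak topology being metrized by a Wasserstein-type distance — to some $F_t\in C([0,\infty),P_\alpha(\bR^3))$. To pass to the limit in \eqref{weakmeasure} one writes $L^{e,n,\epsilon}_B[\psi]-L^e_B[\psi]$ as the $b-b_n$ contribution, supported on small $\theta$ and made uniformly small by the $\sin^2(\theta/2)$-cancellation together with \eqref{noncutoffb}, plus the $\Phi-\Phi_\epsilon$ contribution, supported on $\{|v-\vs|<\epsilon\}$, where the integrand is $\le C_\psi|v-\vs|^{\gamma+2}\le C_\psi\,\epsilon^{\gamma+2}\to0$ uniformly since $\gamma+2>0$; with $(b_n,\Phi_\epsilon)$ frozen, the remaining integrand is bounded and continuous off the diagonal, so the weak convergence $F^{n,\epsilon}_\tau\otimes F^{n,\epsilon}_\tau\to F_\tau\otimes F_\tau$ completes the passage. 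Properties (1)--(4) of Definition~\ref{measure} then follow: (1) from the uniform estimate, (2) from the limit just obtained, (3) by testing with $C_b^2$-truncations of $v_j$ and passing to the limit, and (4) by testing with truncations of $|v|^2$, using the pointwise energy drop and the weak lower semicontinuity of $F\mapsto\int|v|^2\,\rd F$.

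\emph{Main obstacle.} The delicate point is the $(n,\epsilon)$-uniform control of the soft-potential singularity at $v=\vs$ — both to close the moment estimates and to remove the cutoff $\Phi_\epsilon$. In the measure-valued setting $F_\tau$ may carry atoms, so $F_\tau\otimes F_\tau$ may concentrate on the diagonal $\{v=\vs\}$, where $|v-\vs|^\gamma$ is not integrable; it is only the interplay between the soft-potential cutoff of \cite{morimoto2016measure} and the $\theta^2$-type gain of the angular integral — available precisely because $\gamma>-2$ — that saves the estimate. Making this trade-off quantitative for the inelastic collision geometry, which in turn pins down the sharp threshold $c_{\gamma,s}$ in the infinite-energy case, and establishing the inelastic Povzner inequality with constants independent of the regularization, is the technical core of the proof.
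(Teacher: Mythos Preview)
Your overall scheme --- regularize, obtain uniform moment bounds, extract a limit by compactness, pass to the limit in the weak formulation --- matches the paper's, and the moment-threshold analysis that produces $c_{\gamma,s}$ is driven by the same Taylor-expansion/interpolation device (compare your sketch with the paper's Proposition~\ref{infinitePro}). The substantive difference is \emph{where} the analysis lives. The paper carries out both the cutoff well-posedness and the compactness step on the Fourier side, via characteristic functions: cutoff existence (Theorem~\ref{local}) is proved by rewriting the equation as \eqref{phiequ} and running a contraction in $\mathcal K^\alpha$, the key Lipschitz estimate coming from the decay of $\hat\Phi_c$ (Lemma~\ref{PhiDecay}) through Lemma~\ref{GG}; compactness is then Arzel\`a--Ascoli for the sequence $\{\varphi^n\}$ on compacta of $(t,\xi)$ (Proposition~\ref{finitePro}), and the return to measures goes through the embedding \eqref{embed} and Lemma~\ref{DissiLemma}. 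Your route --- a Banach fixed point directly on $P_\alpha$ under a Wasserstein-type metric, and Prokhorov tightness in place of characteristic-function equicontinuity --- is the natural physical-space alternative (closer in spirit to Lu--Mouhot than to Cannone--Karch/Morimoto--Wang--Yang). It trades the algebra of the Bobylev identity for more direct moment control, but you then have to verify by hand the Lipschitz property of $Q_e^{n,\epsilon}$ on measures that the paper gets essentially for free from $\hat\Phi_c$.

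Two small points to tighten. First, you invoke ``$\gamma+2>0$'' both for the equicontinuity and for removing $\Phi_\epsilon$, but the statement includes $\gamma=-2$; at that endpoint $|v-v_*|^{\gamma+2}\equiv1$, so the $\epsilon^{\gamma+2}\to0$ trick fails and you need instead the argument the paper uses for term $(I)$ in \eqref{symmweak}, namely a uniform bound $|\Psi^e_n|,|\Psi^e|\lesssim\langle v\rangle^{2+\gamma}\langle v_*\rangle^{2+\gamma}$ together with pointwise convergence and the growth condition of Lemma~\ref{DissiLemma}(iii). Second, what you call a ``Povzner-type inequality'' is, in the soft-potential regime, really just the second-order Taylor/cancellation bound \eqref{trianglees}--\eqref{tri1}; no genuine Povzner gain from the collision is needed for moment \emph{propagation}, only the symmetry \eqref{sym1} that kills the first-order term and leaves an $O(\theta^2|v-v_*|^2)$ remainder.
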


Meanwhile, we can further obtain the moment propagation property of any measure-valued solution $ F_{t} \in C \left( [0,\infty), P_{2}\left(\mathbb{R}^{3}\right)  \right) $ in the case of finite energy initial datum.
\begin{corollary}\label{momentspro}
	Let $ e \in (0,1] $ and the collision kernel $ B $ satisfy the \eqref{Bb}-\eqref{noncutoffb} with $ -2 \leq  \gamma < 0 $. For any measure-valued solution $ F_{t} \in C\left(  \left(0,\infty\right), P_{2}\left(\mathbb{R}^{3}\right) \right) $ to problem \eqref{IB}-\eqref{F0} with the initial datum $ F_{0} \in P_{2}(\mathbb{R}^{3}) $, the moment propagation property holds in the sense that:
	for any $ l > 0 $ and
	\begin{equation}
	\int_{\bR^{3}} |v|^{l} \,\rd F_{0}(v) < \infty,
	\end{equation}
	then for all $ T \in \left( 0,\infty \right) $,
	\begin{equation}
       \sup\limits_{t\in [0,T]} \int_{\bR^{3}} |v|^{l} \,\rd F_{t}(v) < \infty.
       \end{equation}   
\end{corollary}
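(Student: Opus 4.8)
The plan is to follow the elastic-case strategy of \cite{morimoto2016measure}, controlling the two singularities of the present model (the strong angular singularity of $b$ and the soft-potential factor $|v-\vs|^{\gamma}$) through a single Povzner-type estimate. First I reduce to the case $l>2$: for $0<l\le 2$, Jensen's inequality (each $F_t$ is a probability measure) together with the energy dissipation law \eqref{enerdiss} gives $\int_{\bR^{3}}|v|^{l}\,\rd F_t\le\big(\int_{\bR^{3}}|v|^{2}\,\rd F_t\big)^{l/2}\le\big(\int_{\bR^{3}}|v|^{2}\,\rd F_0\big)^{l/2}$ uniformly in $t$, so from now on $l>2$, and since $\langle v\rangle^{l}\le C_l(1+|v|^{l})$ it is equivalent to bound $M_l(t):=\int_{\bR^{3}}\langle v\rangle^{l}\,\rd F_t$, with $M_l(0)<\infty$ by hypothesis. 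As $C_b^{2}(\bR^{3})$ only allows quadratic growth, $\langle v\rangle^{l}$ is not itself an admissible test function, so I regularise: fix a radially non-increasing $\phi\in C_c^{\infty}(\bR^{3})$ with $\phi\equiv1$ on $B_1$ and $\mathrm{supp}\,\phi\subset B_2$, and set $\psi_n(v):=\langle v\rangle^{l}\phi(v/n)\in C_c^{\infty}(\bR^{3})\subset C_b^{2}(\bR^{3})$. Then $\psi_n\uparrow\langle v\rangle^{l}$ pointwise, and on $\mathrm{supp}\,\psi_n$, where $\langle v\rangle\lesssim n$, a routine computation gives $|\nabla^{k}\psi_n(v)|\le C_l\langle v\rangle^{l-k}$ for $k=0,1,2$, \emph{uniformly in} $n$. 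Plugging $\psi=\psi_n$ into the weak formulation \eqref{weakmeasure}, which is legitimate with both sides finite by parts (1) and (2) of Definition~\ref{measure}, yields
\[
\int_{\bR^{3}}\psi_n\,\rd F_t=\int_{\bR^{3}}\psi_n\,\rd F_0+\frac12\int_0^{t}\!\!\int_{\bR^{3}}\!\!\int_{\bR^{3}}L^{e}_{B}[\psi_n](v,\vs)\,\rd F_\tau(v)\,\rd F_\tau(\vs)\,\rd\tau .
\]

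The core step is a pointwise bound on $L^{e}_{B}[\psi_n]$ uniform in $n$. From \eqref{ve} one computes $v'-v=-(\vs'-\vs)=\tfrac{1+e}{4}|v-\vs|\big(\sigma-\tfrac{v-\vs}{|v-\vs|}\big)$, so $|v'-v|\le|v-\vs|\sin(\theta/2)$, and a second-order Taylor expansion gives
\[
\psi_n(v')+\psi_n(\vs')-\psi_n(v)-\psi_n(\vs)=\big[\nabla\psi_n(v)-\nabla\psi_n(\vs)\big]\cdot(v'-v)+R,
\]
with $|R|\le C|v'-v|^{2}\sup|\nabla^{2}\psi_n|\le C_l\big(\langle v\rangle^{l-2}+\langle \vs\rangle^{l-2}\big)|v-\vs|^{2}\sin^{2}(\theta/2)$. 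Integrating over $\sigma\in\Sd^{2}$, the remainder contributes $\lesssim\big(\langle v\rangle^{l-2}+\langle \vs\rangle^{l-2}\big)|v-\vs|^{2}\int_0^{\pi/2}\sin^{2}(\theta/2)b(\cos\theta)\sin\theta\,\rd\theta$; by rotational symmetry in $\sigma$ the linear term integrates to a multiple of $\big[\nabla\psi_n(v)-\nabla\psi_n(\vs)\big]\cdot\tfrac{v-\vs}{|v-\vs|}\,|v-\vs|\int_0^{\pi/2}(\cos\theta-1)b(\cos\theta)\sin\theta\,\rd\theta$, which, using $|\nabla\psi_n(v)-\nabla\psi_n(\vs)|\le C_l|v-\vs|\big(\langle v\rangle^{l-2}+\langle \vs\rangle^{l-2}\big)$ and $1-\cos\theta=2\sin^{2}(\theta/2)$, is again $\lesssim\big(\langle v\rangle^{l-2}+\langle \vs\rangle^{l-2}\big)|v-\vs|^{2}\int_0^{\pi/2}\sin^{2}(\theta/2)b\sin\theta\,\rd\theta$. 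Both angular integrals are finite, since $\alpha_0\le2$ forces $\sin^{2}(\theta/2)\le\sin^{\alpha_0}(\theta/2)$, so that \eqref{noncutoffb} applies. Restoring the factor $|v-\vs|^{\gamma}$ (which is independent of $\sigma$) gives
\[
\big|L^{e}_{B}[\psi_n](v,\vs)\big|\le C_l\,\big(\langle v\rangle^{l-2}+\langle \vs\rangle^{l-2}\big)\,|v-\vs|^{2+\gamma},
\]
with $C_l$ depending only on $l$, $e$ and $\int_0^{\pi/2}\sin^{\alpha_0}(\theta/2)b(\cos\theta)\sin\theta\,\rd\theta$. The key point is that the difference $\nabla\psi_n(v)-\nabla\psi_n(\vs)$ produces an extra power $|v-\vs|$, so the soft-potential kernel only ever enters as $|v-\vs|^{2+\gamma}$; since $-2\le\gamma<0$ we have $2+\gamma\in[0,2)$, hence $|v-\vs|^{2+\gamma}\le C(1+|v-\vs|^{2})\le C\langle v\rangle^{2}\langle \vs\rangle^{2}$, and the diagonal singularity disappears.

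To close, this yields $|L^{e}_{B}[\psi_n]|\le C_l'\big(\langle v\rangle^{l}\langle \vs\rangle^{2}+\langle v\rangle^{2}\langle \vs\rangle^{l}\big)$; integrating against $\rd F_\tau(v)\,\rd F_\tau(\vs)$ and using the uniform bound $\int_{\bR^{3}}\langle v\rangle^{2}\,\rd F_\tau\le 1+\int_{\bR^{3}}|v|^{2}\,\rd F_0=:E_0$ (again \eqref{enerdiss}) gives $\int_{\bR^{3}}\psi_n\,\rd F_t\le M_l(0)+C_l'E_0\int_0^{t}M_l(\tau)\,\rd\tau$. Letting $n\to\infty$ by monotone convergence produces the integral inequality $M_l(t)\le M_l(0)+C_l'E_0\int_0^{t}M_l(\tau)\,\rd\tau$. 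Since $M_l$ is the pointwise supremum of the continuous maps $t\mapsto\int_{\bR^{3}}\psi_n\,\rd F_t$ (weak continuity of $t\mapsto F_t$ and $\psi_n\in C_b$), it is lower semicontinuous; a standard bootstrap (local finiteness of $M_l$ on $[0,t_0]$ propagates to a right neighbourhood of $t_0$ by Gronwall's inequality, and $M_l(0)<\infty$ seeds it) shows $M_l$ is finite and locally bounded on $[0,\infty)$, and then Gronwall's inequality gives $M_l(t)\le M_l(0)\exp(C_l'E_0 t)$. In particular $\sup_{t\in[0,T]}\int_{\bR^{3}}|v|^{l}\,\rd F_t\le\sup_{[0,T]}M_l<\infty$.

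I expect the second step to be the main obstacle: one must extract the compensating factor $|v-\vs|^{2}$ from the collisional difference in a way that is robust to the strong angular singularity (only $\alpha_0\le2$ is available, which essentially forces keeping the symmetrised linear term rather than estimating it termwise), and it is precisely this factor, together with $\gamma\ge-2$, that absorbs the soft-potential singularity at $v=\vs$. The passage to the limit in $n$ and the Gronwall/bootstrap closure are then routine.
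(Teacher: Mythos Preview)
Your Povzner-type bound on $L^{e}_{B}[\psi_n]$ is correct and is the substantive part of the argument: you handle the angular singularity via the symmetrised linear term and absorb the soft-potential factor through $|v-v_*|^{2+\gamma}$ with $2+\gamma\ge 0$. Your route, however, differs from the paper's. The paper does not truncate by a compactly supported cutoff $\psi_n=\langle v\rangle^{l}\phi(v/n)$; it uses instead the weight $\mathcal{W}_{\delta,\kappa}(|v|^{2})=(1+|v|^{2})^{1+\kappa/2}\big/\bigl(1+\delta(1+|v|^{2})^{\kappa/2}\bigr)$, whose crucial feature is that it is bounded by $\delta^{-1}\langle v\rangle^{2}$, so that the \emph{truncated} moment is finite a priori by the energy bound. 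Gronwall is then applied at fixed $\delta$ to an honest finite quantity, and only afterwards does one send $\delta\to 0$ (with an additional iteration in $\kappa$ for large moments).

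There is a genuine gap in your closure. After $n\to\infty$ you arrive at $M_l(t)\le M_l(0)+C\int_0^{t}M_l(\tau)\,\rd\tau$ for a function $M_l:[0,\infty)\to[0,\infty]$ that is only known to be lower semicontinuous with $M_l(0)<\infty$. This inequality does \emph{not} exclude $M_l\equiv\infty$ on $(0,\infty)$: both sides are then $+\infty$ and the inequality is vacuous. Your bootstrap (``local finiteness on $[0,t_0]$ propagates to a right neighbourhood'') cannot be seeded, since lower semicontinuity at $0$ gives no upper control of $M_l$ near $0$. The culprit is the multiplicative estimate $|v-v_*|^{2+\gamma}\le C\langle v\rangle^{2}\langle v_*\rangle^{2}$, which places the top-order moment $M_l$ on the right-hand side before you know it is finite. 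A simple repair, in the spirit of the paper's iterative Step~2, is to use the additive bound $|v-v_*|^{2+\gamma}\le C\bigl(\langle v\rangle^{2+\gamma}+\langle v_*\rangle^{2+\gamma}\bigr)$ instead; this yields
\[
\big|L^{e}_{B}[\psi_n](v,v_*)\big|\le C\Bigl(\langle v\rangle^{l+\gamma}+\langle v_*\rangle^{l+\gamma}+\langle v\rangle^{l-2}\langle v_*\rangle^{2+\gamma}+\langle v\rangle^{2+\gamma}\langle v_*\rangle^{l-2}\Bigr),
\]
so that after integration only moments of orders $l+\gamma$, $l-2$ and $2+\gamma$ appear, all strictly below $l$. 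One then closes by induction on $l$ in steps of $|\gamma|>0$, starting from the energy bound at $l=2$; at each step the right-hand side is already known to be finite, and the monotone-convergence passage $n\to\infty$ gives $M_l(t)\le M_l(0)+C(T)\,t$ directly, with no bootstrap needed.
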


\subsection{Plan of Paper}
The paper is organized as follows. In the next section \ref{sec:pre} we will first introduce some preliminary properties of the characteristic function $ \p $ as well as the cutoff function $ \Phi_{c} $, which have the key merits in constructing the measure-valued solution and handling the singularity of the soft potential collision kernels. In section \ref{sec:cutoff}, the well-posed theory under cutoff assumption is proved by using the Banach Contraction Theorem, meanwhile, the conservative and dissipative properties of the various moments of the corresponded cutoff solution are shown as well. Finally, based on the cutoff solutions, the existence results without the cutoff assumption are established in section \ref{sec:noncutoff} by compactness argument for finite energy and infinite energy initial datum, respectively.

\section{Preliminaries}\label{sec:pre}

\subsection{Some Properties of Characteristic Functions}
\label{subsec:characteristic}

To illustrate our working function space, the characteristic function space is first introduced by following the same definition as in \cite{cannone2010infinite}, which consists of the Fourier transformations for the probability measures.

\begin{definition}
	A function $ \p:= \mathbb{R}^{d} \mapsto \mathbb{C} $ is called a characteristic function if there is a probability measure $ \mu $ (i.e. a Borel measure with $ \int_{\mathbb{R}^{d}} \mu \,\rd v = 1 $) such that the identity holds: $ \p(\xi) = \hat{\mu}(\xi) = \int_{\mathbb{R}^{d}} \e^{-\im v\cdot \xi} \mu \,\rd v $. Then the set of all characteristic function $ \p := \mathbb{R}^{d} \mapsto \mathbb{C} $ is denoted by $\mathcal{K}$.
\end{definition}

For the self-contained purposes, we include the following celebrated Theorem \ref{character1} by Bochner, which introduces an equivalent condition of the characteristic function:
\begin{theorem}\label{character1}
	A function $ \p : \mathbb{R}^{3} \rightarrow \mathbb{C} $ is a characteristic function if and only if the following conditions hold:\\
	(i) $ \p $ is a continuous function on $ \mathbb{R}^{3} $.\\
	(ii) $ \p(0) = 1 $.\\
	(iii) $ \p $ is positive-definite, \textit{i.e.}, for every $ k\in \mathbb{N} $, every vector $ \xi^{1},...,\xi^{k} \in \mathbb{R}^{3} $ and all $ \lambda_{1},...,\lambda_{k} \in \mathbb{C} $, such that
	\begin{equation}
	\sum_{j,l}^{k} \p\left(\xi^{j} - \xi^{l} \right) \lambda_{j} \overline{\lambda_{l}} \geq 0.
	\end{equation}
\end{theorem}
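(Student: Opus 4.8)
The plan is to prove the stated equivalence by establishing the two implications separately; the forward (``only if'') direction is elementary, while the converse carries all of the analytic weight and is the classical content of Bochner's theorem.

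\emph{Necessity.} Suppose $\p=\hat{\mu}$ for a probability measure $\mu$. Continuity (i) will follow from dominated convergence, since $|\e^{-\im v\cdot\xi}|\equiv 1$ is integrable against $\mu$; condition (ii) is immediate from $\p(0)=\int_{\bR^{3}}\rd\mu=1$; and for positive-definiteness (iii) I would substitute the integral representation into the quadratic form and recognize it as a square,
\[
\sum_{j,l=1}^{k}\p(\xi^{j}-\xi^{l})\lambda_{j}\overline{\lambda_{l}}
=\int_{\bR^{3}}\Big|\sum_{j=1}^{k}\lambda_{j}\,\e^{-\im v\cdot\xi^{j}}\Big|^{2}\rd\mu(v)\geq 0 .
\]

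\emph{Sufficiency and the key positivity.} Assume (i)--(iii). I first record the elementary consequences of (iii): taking $k=1$ gives $\p(0)\geq 0$, while the Hermitian positive-semidefinite $2\times2$ matrix built from the nodes $\{0,\xi\}$ yields $\p(-\xi)=\overline{\p(\xi)}$ and $|\p(\xi)|\leq\p(0)=1$, so $\p$ is bounded. The heart of the argument is to show that the Gaussian-regularized inverse transform
\[
p_{\epsilon}(v):=(2\pi)^{-3}\int_{\bR^{3}}\p(\zeta)\,\e^{\im v\cdot\zeta}\,\e^{-\epsilon|\zeta|^{2}/8}\,\rd\zeta,\qquad \epsilon>0,
\]
is \emph{nonnegative}. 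To this end I would establish the auxiliary fact that $\iint\p(\xi-\eta)g(\xi)\overline{g(\eta)}\,\rd\xi\,\rd\eta\geq 0$ for every $g\in C_{c}(\bR^{3})$ (extended to $g\in L^{1}\cap L^{2}$ by truncation, the double integral converging since $\p$ is bounded), by approximating the double integral by Riemann sums whose summands are exactly of the positive-definite form in (iii) with $\lambda_{j}=g(\xi^{j})\,\Delta$, hence nonnegative, and then passing to the limit using the uniform continuity of $\p$ on compacts. Choosing $g(\xi)=\e^{\im v\cdot\xi}\e^{-\epsilon|\xi|^{2}/4}$ and changing variables via $s=\xi+\eta$, $\zeta=\xi-\eta$ factorizes a positive Gaussian integral in $s$ and leaves precisely $p_{\epsilon}(v)$ up to a positive constant, giving $p_{\epsilon}\geq 0$. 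This positivity step is the decisive and only genuinely nontrivial part of the whole proof, since it is the single place where hypothesis (iii) is used essentially.

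\emph{Mass normalization and passage to the limit.} With $p_{\epsilon}\geq 0$ in hand I would identify $\mu_{\epsilon}(\rd v):=p_{\epsilon}(v)\,\rd v$ as a probability measure: inserting a convergence factor $\e^{-\delta|v|^{2}/2}$ and applying Fubini, the Gaussian $v$-integral produces an approximate identity in $\zeta$ that concentrates at the origin, so that $\int_{\bR^{3}}p_{\epsilon}(v)\e^{-\delta|v|^{2}/2}\,\rd v\to\p(0)=1$ as $\delta\to 0^{+}$; monotone convergence then yields $\int_{\bR^{3}}p_{\epsilon}\,\rd v=1$. A further Gaussian computation (equivalently, Fourier inversion) shows $\hat{\mu}_{\epsilon}(\xi)=\p(\xi)\e^{-\epsilon|\xi|^{2}/8}$, which converges pointwise as $\epsilon\to 0^{+}$ to the function $\p$, continuous with $\p(0)=1$. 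By L\'evy's continuity theorem the family $\{\mu_{\epsilon}\}$ then converges weakly to a probability measure $\mu$ with $\hat{\mu}=\p$, so $\p\in\K$ and the proof is complete. The remaining care beyond the positivity step is purely in justifying the Riemann-sum approximation (boundedness and uniform continuity of $\p$ together with integrability of $g$) and in invoking L\'evy's theorem, both of which are routine.
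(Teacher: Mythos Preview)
Your proof is correct and is the standard route to Bochner's theorem: the easy necessity, then Gaussian regularization to produce nonnegative approximate densities via the positive-definiteness hypothesis, mass normalization through an approximate-identity computation, and finally L\'evy's continuity theorem to pass to the limit. The only places I would tighten are minor: when you extend the Riemann-sum positivity from $g\in C_{c}(\bR^{3})$ to the Gaussian $g(\xi)=\e^{\im v\cdot\xi}\e^{-\epsilon|\xi|^{2}/4}$, make explicit that the truncations $g\mathbf{1}_{|\xi|\le R}$ converge in $L^{1}$ and that $|\p|\le 1$ lets you pass to the limit in the double integral by dominated convergence; and when you assert $\hat\mu_{\epsilon}(\xi)=\p(\xi)\e^{-\epsilon|\xi|^{2}/8}$, note that this is licensed once you know $p_{\epsilon}\in L^{1}$ (which you do, from the mass computation) together with $\p(\cdot)\e^{-\epsilon|\cdot|^{2}/8}\in L^{1}$, so Fourier inversion applies.

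As for comparison with the paper: the paper does not actually prove this theorem. It records it as the ``celebrated Theorem by Bochner'' and, together with Lemma~\ref{character2}, refers the reader to \cite[Sect.~3]{cannone2010infinite} and \cite[Sect.~2.1]{morimoto2016measure} for proofs. So there is no in-paper argument to compare against; you have supplied precisely the classical proof those references would give.
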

Then the following Lemma \ref{character2} can be obtained immediately from the definition of positive-definite functions:
\begin{lemma}\label{character2}
	Any linear combination with positive coefficients of positive-definite functions is a positive-definite function. The set of positive-definite functions is closed with respect to pointwise convergence.
\end{lemma}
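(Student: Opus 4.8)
The plan is to argue directly from the definition of positive-definiteness in Theorem~\ref{character1}(iii), observing that for each fixed integer $k$, each choice of points $\xi^{1},\dots,\xi^{k}\in\mathbb{R}^{3}$ and each choice of scalars $\lambda_{1},\dots,\lambda_{k}\in\mathbb{C}$, the quantity $\sum_{j,l}^{k}\varphi(\xi^{j}-\xi^{l})\lambda_{j}\overline{\lambda_{l}}$ is a \emph{finite} sum that depends linearly on the values of $\varphi$. Both assertions of the lemma are then immediate.

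For the first assertion, let $\varphi_{1},\dots,\varphi_{m}$ be positive-definite and let $c_{1},\dots,c_{m}>0$; set $\varphi=\sum_{i=1}^{m}c_{i}\varphi_{i}$. Fixing an arbitrary configuration $(k;\xi^{1},\dots,\xi^{k};\lambda_{1},\dots,\lambda_{k})$ as above, linearity of the double sum gives
\begin{equation*}
\sum_{j,l}^{k}\varphi(\xi^{j}-\xi^{l})\lambda_{j}\overline{\lambda_{l}}
=\sum_{i=1}^{m}c_{i}\sum_{j,l}^{k}\varphi_{i}(\xi^{j}-\xi^{l})\lambda_{j}\overline{\lambda_{l}}\geq 0,
\end{equation*}
since each inner sum is nonnegative by hypothesis and every $c_{i}>0$. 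As the configuration was arbitrary, $\varphi$ is positive-definite.

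For the second assertion, let $(\varphi_{n})_{n}$ be positive-definite functions converging pointwise to $\varphi$ on $\mathbb{R}^{3}$, and fix a configuration as before. Because the sum ranges over the finitely many indices $1\leq j,l\leq k$ and involves $\varphi_{n}$ only at the finitely many points $\xi^{j}-\xi^{l}$, pointwise convergence lets us pass to the limit term by term:
\begin{equation*}
\sum_{j,l}^{k}\varphi(\xi^{j}-\xi^{l})\lambda_{j}\overline{\lambda_{l}}
=\lim_{n\to\infty}\sum_{j,l}^{k}\varphi_{n}(\xi^{j}-\xi^{l})\lambda_{j}\overline{\lambda_{l}}\geq 0,
\end{equation*}
the inequality surviving the limit. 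Hence $\varphi$ is positive-definite.

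There is no genuine obstacle here: the only point to keep in mind is that positive-definiteness is tested solely against finite families of points and scalars, so no uniformity of the convergence — and no continuity or other regularity of the limit, none of which is claimed — is needed. In particular I would not invoke Bochner's Theorem~\ref{character1} for either part, since the statement concerns only the algebraic positivity condition (iii), not the full characterization of characteristic functions.
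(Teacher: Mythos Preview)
Your argument is correct: positive-definiteness is a condition tested only on finite configurations, so it is preserved under nonnegative linear combinations and under pointwise limits, exactly as you show. The paper itself does not supply a proof of this lemma but simply cites \cite[Sect.~3]{cannone2010infinite} and \cite[Sect.~2.1]{morimoto2016measure}; your direct verification from condition~(iii) of Theorem~\ref{character1} is the standard one and is entirely adequate here.
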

For the proof of the classical results above and more general properties about the positive-definite functions, we refer to the \cite[Sect.3]{cannone2010infinite}, \cite[Sect.2.1]{morimoto2016measure}, and the references therein.

In addition, we also introduce the space $ \K^{\alpha} $, initially defined by Cannone-Karch in \cite{cannone2010infinite}, which is the subspace of $ \K $ as following:
\begin{equation}
\K^{\alpha} = \left\lbrace \p\in\K; \ \left\| \p-1 \right\|_{\alpha}<\infty \right\rbrace 
\end{equation}
where 
\begin{equation}
\left\| \p-1 \right\|_{\alpha} = \sup_{\xi\in\mathbb{R}^{3}} \frac{\left| \p(\xi)-1 \right|}{|\xi|^{\alpha}} .
\end{equation}
Meanwhile, according to \cite[Proposition 3.10]{cannone2010infinite}, the space $ \mathcal{K}^{\alpha} $ equipped with the distance 
\begin{equation}
\|\p - \tilde{\p} \|_{\alpha} = \sup\limits_{\xi \in\mathbb{R}^{3}} \frac{|\p(\xi) - \tilde{\p}(\xi)|}{|\xi|^{\alpha}}
\end{equation}
is a complete metric space; and noticing that $ \mathcal{K}^{\alpha} = \{1\} $ for all $ \alpha > 2 $, the following embedding relation holds
\begin{equation}\label{embed}
\{1\} \subset \mathcal{K}^{\alpha} \subsetneq \mathcal{F}(P_{\alpha_{0}}) \subsetneq \mathcal{K}^{\alpha_{0}} \subset \mathcal{K}^{0} = \mathcal{K}
\end{equation}
for all $ 2 \geq \alpha \geq \alpha_{0} \geq 0 $ if $ \alpha \neq 1 $.
More general properties and estimates of the functions in $ \mathcal{K} $ and $ \mathcal{K}^{\alpha} $ are summarized in our preceding work \cite{Qi20}. Here we only recall the following technical Lemma \ref{L1}, which is useful for our proof in the rest of the paper.
\begin{lemma}\label{L1}
For any positive-definite function $ \p=\p(\xi)\in\mathcal{K} $ such that $ \p(0) = 1 $, we have
\begin{equation}\label{l1}
\left|\p(\xi) - \p(\eta)\right|^{2} \leq 2\left( 1 - \text{Re} \left[ \p(\xi-\eta) \right]\right) 
\end{equation}
and
\begin{equation}\label{l2}
\left|\p(\xi)\p(\eta) - \p(\xi+\eta) \right|^{2} \leq \left(1-\left|\p(\xi) \right|^{2 }\right)\left(1-\left|\p(\eta) \right|^{2 }\right)
\end{equation}
Moreover, if $ \p=\p(\xi)\in\mathcal{K}^{\alpha} $, then we have
\begin{equation}\label{l3}
\left| \p(\xi) - \p(\xi+\eta) \right| \leq \left\| \p-1 \right\|_{\alpha}\left( 4|\xi|^{\frac{\alpha}{2}} |\eta|^{\frac{\alpha}{2}} + |\eta|^{\alpha} \right).
\end{equation}
\end{lemma}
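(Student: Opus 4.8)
The plan is to prove the three inequalities in order, relying only on the positive-definiteness of $\p$ and the normalization $\p(0)=1$. For \eqref{l1}, I would exploit the standard fact that a characteristic function $\p=\hat\mu$ can be written as $\p(\xi)=\int_{\bR^3}\e^{-\im v\cdot\xi}\,\rd\mu(v)$ with $\mu$ a probability measure. Then $\p(\xi)-\p(\eta)=\int_{\bR^3}\e^{-\im v\cdot\eta}\bigl(\e^{-\im v\cdot(\xi-\eta)}-1\bigr)\,\rd\mu(v)$, and by the Cauchy–Schwarz inequality applied with respect to the probability measure $\mu$,
\begin{equation}
\left|\p(\xi)-\p(\eta)\right|^2 \leq \int_{\bR^3}\left|\e^{-\im v\cdot(\xi-\eta)}-1\right|^2\,\rd\mu(v).
\end{equation}
Expanding the integrand as $|\e^{-\im v\cdot\zeta}-1|^2 = 2-2\cos(v\cdot\zeta) = 2\bigl(1-\mathrm{Re}\,\e^{-\im v\cdot\zeta}\bigr)$ with $\zeta=\xi-\eta$, and integrating against $\mu$, gives exactly $2(1-\mathrm{Re}[\p(\xi-\eta)])$. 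Alternatively, if one wishes to stay purely at the level of positive-definiteness (Theorem \ref{character1}), one applies the positive-definiteness condition to the three points $\{\xi,\eta,0\}$ (or rather to $\xi-\eta$, $0$ after a shift) with a suitable choice of coefficients $\lambda_j$; this is the abstract version of the same computation, but the integral representation is cleaner and I would present that.

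For \eqref{l2} I would use the same integral representation. Write
\begin{equation}
\p(\xi)\p(\eta)-\p(\xi+\eta)=\int\int\left(\e^{-\im v\cdot\xi}\e^{-\im w\cdot\eta}-\e^{-\im v\cdot(\xi+\eta)}\right)\rd\mu(v)\,\rd\mu(w)
=\int\int \e^{-\im v\cdot\xi}\e^{-\im w\cdot\eta}\bigl(1-\e^{-\im(w-v)\cdot\eta}\bigr)\cdots
\end{equation}
— more symmetrically, I would symmetrize in $(v,w)$ and write $\p(\xi)\p(\eta)-\p(\xi+\eta)=\tfrac12\int\int(\e^{-\im v\cdot\xi}-\e^{-\im w\cdot\xi})(\e^{-\im w\cdot\eta}-\e^{-\im v\cdot\eta})\,\rd\mu(v)\,\rd\mu(w)$, which one checks by expanding the four terms. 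Then Cauchy–Schwarz in the product measure $\mu\otimes\mu$ bounds the modulus squared by
\begin{equation}
\frac14\left(\int\int\left|\e^{-\im v\cdot\xi}-\e^{-\im w\cdot\xi}\right|^2\rd\mu\,\rd\mu\right)\left(\int\int\left|\e^{-\im w\cdot\eta}-\e^{-\im v\cdot\eta}\right|^2\rd\mu\,\rd\mu\right),
\end{equation}
and each double integral equals $2(1-|\p(\xi)|^2)$, respectively $2(1-|\p(\eta)|^2)$, upon expanding $|\e^{-\im v\cdot\xi}-\e^{-\im w\cdot\xi}|^2 = 2 - \e^{-\im(v-w)\cdot\xi}-\e^{\im(v-w)\cdot\xi}$ and integrating (using $\int\e^{-\im v\cdot\xi}\rd\mu=\p(\xi)$ and $\int\int\e^{-\im(v-w)\cdot\xi}\rd\mu\,\rd\mu=|\p(\xi)|^2$). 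The factors of $\tfrac14$ and $2\cdot 2$ cancel, yielding \eqref{l2}.

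For \eqref{l3}, now assuming $\p\in\mathcal{K}^\alpha$, the idea is to combine \eqref{l1} with the defining bound $|\p(\zeta)-1|\leq\|\p-1\|_\alpha|\zeta|^\alpha$. From \eqref{l1} with the roles chosen so that the difference argument is $\eta$, we get $|\p(\xi)-\p(\xi+\eta)|^2\leq 2(1-\mathrm{Re}\,\p(-\eta))\leq 2|1-\p(\eta)|\leq 2\|\p-1\|_\alpha|\eta|^\alpha$ (using $\p(-\eta)=\overline{\p(\eta)}$ so the real parts agree), hence $|\p(\xi)-\p(\xi+\eta)|\leq \sqrt2\,\|\p-1\|_\alpha^{1/2}|\eta|^{\alpha/2}$. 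That bound is not yet of the stated form; to land on $\|\p-1\|_\alpha(4|\xi|^{\alpha/2}|\eta|^{\alpha/2}+|\eta|^\alpha)$ I would instead proceed by telescoping through $1$: write $\p(\xi)-\p(\xi+\eta)=(\p(\xi)-1)-(\p(\xi+\eta)-1)$ is too lossy, so better is to use the multiplicative structure. Actually the cleanest route is: apply \eqref{l2} with $(\xi,\eta)$ replaced by suitable arguments, or apply \eqref{l1} and then bound $1-\mathrm{Re}\,\p(\eta)$ together with an interpolation $|\xi|^{\alpha/2}|\eta|^{\alpha/2}$ coming from splitting $\p(\xi)-\p(\xi+\eta)$ using $|\p(\xi)\p(\eta)-\p(\xi+\eta)|\le \sqrt{(1-|\p(\xi)|^2)(1-|\p(\eta)|^2)}$ and $|\p(\xi)-\p(\xi)\p(\eta)|=|\p(\xi)||1-\p(\eta)|\le\|\p-1\|_\alpha|\eta|^\alpha$, then
\begin{equation}
|\p(\xi)-\p(\xi+\eta)|\leq |\p(\xi)||1-\p(\eta)| + |\p(\xi)\p(\eta)-\p(\xi+\eta)| \leq \|\p-1\|_\alpha|\eta|^\alpha + \sqrt{(1-|\p(\xi)|^2)(1-|\p(\eta)|^2)}.
\end{equation}
Since $1-|\p(\zeta)|^2\le 2|1-\p(\zeta)|\le 2\|\p-1\|_\alpha|\zeta|^\alpha$, the square root is bounded by $2\|\p-1\|_\alpha|\xi|^{\alpha/2}|\eta|^{\alpha/2}$, giving $|\p(\xi)-\p(\xi+\eta)|\le\|\p-1\|_\alpha(|\eta|^\alpha+2|\xi|^{\alpha/2}|\eta|^{\alpha/2})$, which is even slightly sharper than \eqref{l3} and certainly implies it.

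The main obstacle is essentially bookkeeping rather than conceptual: getting the constants right in \eqref{l3} and, in \eqref{l2}, setting up the symmetrized double-integral identity so that Cauchy–Schwarz applies cleanly without spurious cross terms. One must also be slightly careful that $\|\p-1\|_\alpha<\infty$ does not by itself control $|\p|$ from above by $1$ pointwise except through $|\p(\zeta)|\le1$, which holds automatically for any characteristic function; this is what lets us discard the factors $|\p(\xi)|,|\p(\eta)|\le1$ in the estimates above. Everything else is a direct computation with the Fourier integral representation.
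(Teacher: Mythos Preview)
Your argument is correct in all three parts. The paper itself does not give a self-contained proof of this lemma: it simply cites \cite[Lemma 3.8]{cannone2010infinite} for \eqref{l1}--\eqref{l2} and \cite[Lemma 2.1]{morimoto2012remark} for \eqref{l3}, noting only that the first two are ``based on the definition of positive-definite function.''

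Your route for \eqref{l1}--\eqref{l2} via the Bochner integral representation and Cauchy--Schwarz on $L^2(\mu)$ (resp.\ $L^2(\mu\otimes\mu)$) is a mild variant of the referenced approach: Cannone--Karch work directly with the positive-definiteness matrix inequality on finitely many points, whereas you pass through the underlying probability measure. Both are equivalent once one invokes Theorem~\ref{character1}, and yours is arguably cleaner. For \eqref{l3} your triangle-inequality splitting $\p(\xi)-\p(\xi+\eta)=\p(\xi)(1-\p(\eta))+(\p(\xi)\p(\eta)-\p(\xi+\eta))$ followed by \eqref{l2} is exactly the mechanism in Morimoto's lemma, and as you observe it even yields the sharper constant $2$ in place of $4$. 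The only cosmetic point: the presentation of the proof of \eqref{l3} meanders through a couple of false starts before landing on the right decomposition; in a final write-up you would simply begin with that decomposition.
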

\begin{proof}
	The proof of \eqref{l1}-\eqref{l2} is based on the definition of positive-definite function, which is proved in detail in \cite[Lemma 3.8]{cannone2010infinite}. And the proof of \eqref{l3} can be found in \cite[Lemma 2.1]{morimoto2012remark}. 
\end{proof}

\subsection{Some Properties of Cutoff Function $\Phi_{c}$. }
In this subsection, we first introduce the cutoff function $ \Phi_{c} $ for the kinetic collision kernel $ \Phi(|v-v_{*}|) = |v-v_{*}|^{\gamma} $.
\begin{definition}\label{D1}
For the moderately soft potential case $ \left( -2 \leq \gamma < 0\right) $, $ \Phi_{c} $ takes the form $ \Phi_{c}\left( |v-v_{*}| \right) = |v-v_{*}|^{\gamma}\phi_{c}\left( |v-v_{*}| \right) $, where the smooth function $ \phi_{c}\left(|v-v_{*}|\right) \in\mathcal{C}_{c}^{\infty}\left(\mathbb{R}^{3}\right) $ satisfies:\\
(i) $ \phi_{c} $ is supported on $ \left\lbrace v-v_{*}: 1/2r \leq |v-v_{*}| \leq 2r \right\rbrace $ with $ 0\leq \phi_{c} \leq 1  $;\\
(ii) $ \phi_{c} = 1 $ on $ \left\lbrace v-v_{*}: 1/r \leq |v-v_{*}| \leq r \right\rbrace $.
\end{definition}
Then, the useful estimate of the function $ \hat{\Phi}_{c} $ as in \cite{morimoto2016measure} is also presented as following,
\begin{lemma}\label{PhiDecay}
	Let the cutoff kinetic collision kernel $ \Phi_{c} $ be defined as in Definition \ref{D1}. 
       For the soft potential case $ \gamma < 0 $, we have, for all $ k,N\in\mathbb{N} $ and $ \zeta\in\mathbb{R}^{3} $, there exists a constant $ C_{r,N,k} $ such that
	\begin{equation}\label{PhiDecaySoft}
	\left| \partial_{\zeta}^{k} \hat{\Phi}_{c}(\zeta) \right| \lesssim \frac{C_{r,N,k}}{\left< \zeta\right>^{2N}}, \quad  \text{if}\  N \geq 1 \ \text{and}\  k \geq 0,
	\end{equation}
	where $ \hat{\Phi}_{c} $ is Fourier transformation of $ \Phi_{c} $ defined as in the following formula \eqref{IBE}.
\end{lemma}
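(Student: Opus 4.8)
The plan is to notice that, for the moderately soft potential $\gamma<0$, the cutoff $\phi_{c}$ removes exactly the region $\{v-v_{*}=0\}$ where $|v-v_{*}|^{\gamma}$ fails to be smooth, so that $\Phi_{c}$ is in fact a genuine test function in $\mathcal{C}_{c}^{\infty}(\mathbb{R}^{3})$; the asserted bound then reduces to the classical fact that the Fourier transform of a test function is rapidly decreasing, made quantitative through $L^{1}$ norms.

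First I would verify that $\Phi_{c}\in\mathcal{C}_{c}^{\infty}(\mathbb{R}^{3})$. By Definition~\ref{D1}, $\phi_{c}$ is supported in the annulus $\{1/(2r)\le|v-v_{*}|\le 2r\}$, which is bounded away from the origin; on that set the map $z\mapsto|z|^{\gamma}$ is $C^{\infty}$ for every $\gamma\in\mathbb{R}$, hence $\Phi_{c}(z)=|z|^{\gamma}\phi_{c}(z)$ is smooth, and it inherits the compact support of $\phi_{c}$. Consequently $\Phi_{c}$ and all of its derivatives lie in $L^{1}(\mathbb{R}^{3})$, with norms controlled only by $\gamma$, the inner and outer radii of the annulus, and the derivatives of $\phi_{c}$ --- that is, only by $r$ and the orders of differentiation.

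Next, since $\Phi_{c}$ is smooth with compact support, differentiation under the integral sign and repeated integration by parts (with no boundary terms) are legitimate, and they yield the standard identities, up to a harmless normalization constant fixed by the Fourier convention,
\[
\partial_{\zeta}^{k}\hat{\Phi}_{c}(\zeta)=\widehat{(-\im v)^{k}\Phi_{c}}(\zeta),\qquad |\zeta|^{2N}\,\partial_{\zeta}^{k}\hat{\Phi}_{c}(\zeta)=\widehat{(-\Delta_{v})^{N}\big[(-\im v)^{k}\Phi_{c}\big]}(\zeta).
\]
Combining these with the trivial bound $\|\hat{g}\|_{L^{\infty}}\le\|g\|_{L^{1}}$ gives at once
\[
\big|\partial_{\zeta}^{k}\hat{\Phi}_{c}(\zeta)\big|\le C_{r,N,k},\qquad |\zeta|^{2N}\,\big|\partial_{\zeta}^{k}\hat{\Phi}_{c}(\zeta)\big|\le C_{r,N,k},
\]
where $C_{r,N,k}$ denotes the maximum of $\|(-\im v)^{k}\Phi_{c}\|_{L^{1}}$ and $\|(-\Delta_{v})^{N}[(-\im v)^{k}\Phi_{c}]\|_{L^{1}}$, both finite by the previous paragraph and, by Leibniz's rule together with the support condition, bounded by a constant depending only on $r$, $N$, $k$. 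Adding the two inequalities and using $\langle\zeta\rangle^{2N}=(1+|\zeta|^{2})^{N}\lesssim_{N}1+|\zeta|^{2N}$ then gives $\langle\zeta\rangle^{2N}\big|\partial_{\zeta}^{k}\hat{\Phi}_{c}(\zeta)\big|\lesssim C_{r,N,k}$, which is precisely \eqref{PhiDecaySoft}.

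I do not expect a genuine obstacle here: the only point that needs attention --- and the only place where the specific structure of the soft-potential cutoff enters --- is the very first step, namely that $\phi_{c}$, and hence $\Phi_{c}$, vanishes in a neighborhood of $v-v_{*}=0$, so that the singularity of $|v-v_{*}|^{\gamma}$ never participates and $\Phi_{c}$ is genuinely a compactly supported $C^{\infty}$ function. Once this is in place, the rest is the routine Fourier-decay estimate, and it extends verbatim to $N=0$ and to any multi-index order $k\ge 0$.
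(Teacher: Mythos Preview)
Your argument is correct and is exactly the standard route: once you observe that the support of $\phi_{c}$ keeps $\Phi_{c}$ away from the origin so that $\Phi_{c}\in C_{c}^{\infty}(\mathbb{R}^{3})$, the decay estimate is just the elementary Schwartz--space fact obtained by trading $\langle\zeta\rangle^{2N}$ for $(1-\Delta_{v})^{N}$ and bounding an $L^{1}$ norm. The paper itself does not give a proof but simply cites \cite[Lemma~2.5]{morimoto2016measure}, whose argument is precisely the one you wrote out.
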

\begin{proof}
	The proof has been given in \cite[Lemma 2.5]{morimoto2016measure}.
\end{proof}

\section{Existence and Uniqueness with Cutoff Assumption}\label{sec:cutoff}

In this section, we consider the Cauchy problem with cutoff collision kernel in the sense that
\begin{equation}\label{IBEcut}
\partial_{t} f(t, v) = \int_{\bR^{3}} \int_{\Sd^{2}} b_{c}\left(\frac{v-v_{*}}{|v-v_{*}|} \cdot \sigma\right) \Phi_{c}(|v-v_{*}|)  \left[ J f(\vs')f(v')- f(\vs)f(v) \right] \,\rd \sigma \,\rd \vs,
\end{equation}
associated with the non-negative initial condition, including both finite and infinite energy case,
\begin{equation}\label{F0cut}
f(0,v) = F_{0}  \in P_{\alpha_{0}}(\mathbb{R}^{3}), \quad 0 < \alpha_{0} \leq 2,
\end{equation}
where, without loss of generality, the angular part $ b_{c} $ is assumed to be integrable and normalizable such that
\begin{equation}\label{cutoff}
\int_{\Sd^{2}} b_{c}\left(\frac{v-v_{*}}{|v-v_{*}|} \cdot \sigma\right) \,\rd \sigma = 1
\end{equation}
and the function $ \Phi_{c} $, the cutoff version of the kinetic part of collision kernel $ \Phi\left( v-v_{*} \right) =  \left| v-v_{*} \right|^{\gamma} $, is defined as in Definition \ref{D1}. 

\begin{theorem}\label{local}
	\emph{(Well-posedness under cutoff assumption)} For any $ e\in(0,1] $ and $ \alpha_{0}\in \left(0,2\right] $. Let the collision kernel $b_{c}$ satisfy the cutoff assumption \eqref{cutoff} and $ \Phi_{c} $ satisfy the Definition \ref{D1}. Then, for any initial datum $ F_{0}\in P_{\alpha_{0}}(\mathbb{R}^{3}) $, there exists a unique solution $ F_{t} \in C\left( \left[0,\infty \right), P_{\alpha} \right) $ with $ \alpha \in (0,\alpha_{0}) $ to problem \eqref{IBEcut}-\eqref{F0cut} in the measure-valued sense that, for $ \psi(v) \in C_{b}^{2}(\mathbb{R}^{3}) $,
	\begin{equation}
	\begin{split}
	\int_{\bR^{3}} \psi(v) \,\rd F_{t}(v) = &\int_{\bR^{3}} \psi(v) \,\rd F_{0} + \frac{1}{2} \int_{0}^{t} \int_{\bR^{3}} \int_{\bR^{3}} \int_{\Sd^{2}} b_{c}\left(\frac{v-v_{*}}{|v-v_{*}|} \cdot \sigma\right) \Phi_{c}(|v-v_{*}|)\\ &[ \psi(v'_{*}) + \psi(v')
	 - \psi(v_{*}) - \psi(v) ] \,\rd \sigma \,\rd F_{\tau}(v) \,\rd F_{\tau}(v_{*}) \,\rd \tau.
	\end{split}
	\end{equation}
\end{theorem}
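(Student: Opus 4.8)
The plan is to prove Theorem \ref{local} by a fixed-point argument on the Fourier side, working in the complete metric space $(\mathcal{K}^{\alpha}, \|\cdot - \cdot\|_{\alpha})$ for suitable $\alpha \in (0,\alpha_0)$. First I would take the Fourier transform of the cutoff equation \eqref{IBEcut}. Using the weak formulation and the $\omega$-form (or the $\sigma$-form \eqref{ve}) of the inelastic collision rules, the Bobylev-type identity yields
\begin{equation}\label{IBE}
\partial_t \hat{f}(t,\xi) = \int_{\mathbb{R}^3}\int_{\mathbb{S}^2} b_c\!\left(\tfrac{\xi}{|\xi|}\cdot\sigma\right) \hat{\Phi}_c(\zeta) \left[ \hat{f}(\xi^+)\hat{f}(\xi^-) - \hat{f}(\xi)\hat{f}(0) \right]\,\rd\sigma\,\rd\zeta,
\end{equation}
where $\xi^{\pm}$ are the appropriate linear combinations of $\xi$ and the Fourier dual variable determined by $e$ and $\sigma$, and the convolution with $\hat{\Phi}_c$ arises from the kinetic factor $\Phi_c(|v-v_*|)$ (this is exactly the point where the soft-potential weight is tamed: thanks to Lemma \ref{PhiDecay}, $\hat{\Phi}_c$ is rapidly decaying and smooth, so the $\zeta$-integral is harmless). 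Since $b_c$ is normalized by \eqref{cutoff} and $\hat{\Phi}_c \in L^1$, the right-hand side defines a bounded bilinear operator on $\mathcal{K}$; I would rewrite \eqref{IBE} in Duhamel/mild form
\begin{equation}
\hat{f}(t,\xi) = \e^{-\|\hat\Phi_c\|_{L^1} t}\,\hat{F}_0(\xi) + \int_0^t \e^{-\|\hat\Phi_c\|_{L^1}(t-\tau)}\, \mathcal{G}[\hat{f}(\tau)](\xi)\,\rd\tau,
\end{equation}
with $\mathcal{G}$ collecting the gain term, and set up the solution map $\Psi$.

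Next I would verify that $\Psi$ maps $C([0,T],\mathcal{K}^{\alpha})$ into itself and is a contraction for $T$ small (in fact, since the kernel is globally bounded, for any $T$, or by iteration on $[0,\infty)$). The two ingredients are: (a) \emph{stability in $\mathcal{K}$} — the gain term is a convex-combination-type operator (a product of two characteristic functions integrated against a probability measure $b_c\,\rd\sigma$ and the finite measure $\Phi_c$, suitably normalized), so by Lemma \ref{character2} and Bochner's Theorem \ref{character1} it preserves positive-definiteness, continuity and the normalization $\hat{f}(0)=1$; hence $\Psi$ preserves $\mathcal{K}$, and the moment/decay bound defining $\mathcal{K}^{\alpha}$ is propagated using \eqref{l3} and the boundedness of $\hat{\Phi}_c$; (b) \emph{Lipschitz estimate in $\|\cdot\|_{\alpha}$} — for two solutions $\hat{f},\hat{g}$, writing $\hat{f}(\xi^+)\hat{f}(\xi^-) - \hat{g}(\xi^+)\hat{g}(\xi^-) = (\hat{f}(\xi^+)-\hat{g}(\xi^+))\hat{f}(\xi^-) + \hat{g}(\xi^+)(\hat{f}(\xi^-)-\hat{g}(\xi^-))$, dividing by $|\xi|^{\alpha}$, and using $|\xi^{\pm}| \le |\xi|$ together with $|\hat{f}|,|\hat{g}|\le 1$, one gets a bound $\le C\, T\, \|\hat{f}-\hat{g}\|_{C([0,T],\mathcal{K}^{\alpha})}$ with $C = 2\|\hat{\Phi}_c\|_{L^1}$ (the key geometric fact, as in the elastic/Maxwellian case and in \cite{Qi20}, is that the inelastic map still satisfies $|\xi^+|^2 + |\xi^-|^2 \le |\xi|^2$, so $|\xi^{\pm}|/|\xi| \le 1$ and the weights cause no loss). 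Banach's fixed point theorem then gives a unique $\hat{F}_t \in C([0,T],\mathcal{K}^{\alpha})$; a continuation argument extends it to $[0,\infty)$ since the bound does not deteriorate.

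Finally I would translate back: by Bochner, $\hat{F}_t \in \mathcal{K}$ corresponds to a probability measure $F_t$, and the embedding \eqref{embed} together with the propagated $\mathcal{K}^{\alpha}$-bound shows $F_t \in P_{\alpha}(\mathbb{R}^3)$; the weak continuity in $t$ follows from continuity of $t\mapsto \hat{F}_t$ and Lévy's continuity theorem. To recover the measure-valued formulation in the statement, I would note that for $\psi \in C_b^2$ the map $t\mapsto \int \psi\,\rd F_t$ satisfies the integrated weak equation — this is obtained by testing the mild Fourier equation against $\hat{\psi}$ and undoing the transform, using Fubini (justified by $\hat{\Phi}_c\in L^1$, boundedness of $b_c$, and $\psi\in C_b^2$ so that $L_{B_c}^e[\psi]$ is bounded). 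The main obstacle I anticipate is purely bookkeeping rather than conceptual: carefully deriving the correct Bobylev identity \eqref{IBE} for the \emph{inelastic} kernel with the extra convolution against $\hat{\Phi}_c$, and checking that the resulting gain operator is genuinely a (sub)convex combination of characteristic functions so that positive-definiteness is preserved — i.e. controlling the interplay between the inelasticity parameter $e$ in $\xi^{\pm}$ and the cutoff structure. Once that identity is in place, the contraction estimates are routine given Lemmas \ref{character2}, \ref{PhiDecay} and \ref{L1}.
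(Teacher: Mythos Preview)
Your overall architecture (Fourier transform, Duhamel form, Banach fixed point in $\mathcal{K}^{\alpha}$, then inverse transform via \eqref{embed}) is exactly the paper's, but the Lipschitz step contains a genuine gap that is precisely the new difficulty in the soft-potential case.

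The Bobylev identity you wrote is not correct. With the kinetic factor $\Phi_c$ present, the Fourier-side equation (see \eqref{IBE} in the paper) reads
\[
\partial_t\varphi(t,\xi)=\int_{\mathbb{S}^2} b_c\Bigl(\tfrac{\xi\cdot\sigma}{|\xi|}\Bigr)\int_{\mathbb{R}^3}\hat\Phi_c(\zeta)\Bigl[\varphi(\xi_e^{+}-\zeta)\varphi(\xi_e^{-}+\zeta)-\varphi(\zeta)\varphi(\xi-\zeta)\Bigr]\,\rd\zeta\,\rd\sigma,
\]
so the arguments in the gain term are $\xi_e^{\pm}\mp\zeta$, not $\xi_e^{\pm}$, and the loss term is the genuine convolution $\varphi(\zeta)\varphi(\xi-\zeta)$, not $\varphi(\xi)\varphi(0)$. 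Consequently your telescoping bound
\[
|\varphi(\xi_e^{+}-\zeta)-\tilde\varphi(\xi_e^{+}-\zeta)|\le \|\varphi-\tilde\varphi\|_{\alpha}\,|\xi_e^{+}-\zeta|^{\alpha}
\]
produces, after integration against $|\hat\Phi_c(\zeta)|$, a term of order $\int|\hat\Phi_c(\zeta)||\zeta|^{\alpha}\,\rd\zeta>0$ which does \emph{not} vanish as $|\xi|\to0$; dividing by $|\xi|^{\alpha}$ then blows up and the contraction constant $2\|\hat\Phi_c\|_{L^1}$ is simply false. The geometric inequality $|\xi_e^{+}|^{2}+|\xi_e^{-}|^{2}\le|\xi|^{2}$ you invoke is correct but irrelevant once the $\zeta$-shift is present.

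The paper closes this gap in Lemma~\ref{GG}: after the change of variable $\zeta\mapsto\zeta-\xi_e^{-}$ the gain--loss difference becomes an integral against $[\hat\Phi_c(\zeta-\xi_e^{-})-\hat\Phi_c(\zeta)]\,\varphi(\zeta)\varphi(\xi-\zeta)$; a further symmetrization $\zeta\mapsto\xi-\zeta$ turns this into a \emph{second-order} finite difference of $\hat\Phi_c$ (see \eqref{need1}--\eqref{softpur}), which by Taylor expansion and the decay estimate \eqref{PhiDecaySoft} is $O(|\xi_e^{-}|^{2})=O(|\xi|^{2})$ uniformly, with integrable remainder in $\zeta$. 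This cancellation is what makes the $\|\cdot\|_{\alpha}$-Lipschitz bound \eqref{GG12} work for small $|\xi|$ and is the substantive content of the proof; it is not ``routine''. A secondary point: the damping constant should be $A=\sup_q\Phi_c(|q|)$ rather than $\|\hat\Phi_c\|_{L^1}$, so that the remainder $\mathcal{G}_2^e[\varphi]=A\varphi-\text{(loss)}$ is the Fourier transform of a nonnegative density and hence positive-definite; $\hat\Phi_c$ is a signed (oscillatory) function, so your ``convex combination against the finite measure $\Phi_c$'' justification for positive-definiteness needs to be argued on the physical side, as the paper does via the approximation \eqref{ff}.
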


The main strategy to prove the Theorem \ref{local} is transforming the study in the original physical space to the study in characteristic space. Indeed, the inelastic Boltzmann equation with general collision kernels after Fourier transformation can be written as following, see Appendix \ref{appen1} for complete derivation,
\begin{equation}\label{IBE}
\partial_{t} \varphi(\xi,t) = \int_{\Sd^{2}} b_{c}\left(\frac{\xi\cdot\sigma}{|\xi|}\right) \int_{\bR^{3}} \hat{\Phi}_{c}(\zeta) \left[ \varphi(\xie^{+}-\zeta, t)\varphi(\xie^{-}+\zeta, t) - \varphi(\zeta, t)\varphi(\xi-\zeta, t) \right] \rd\zeta \rd\sigma,
\end{equation}
where, for the sake of convenience, we introduce extra parameters $ a_{+} = \frac{1+e}{2} $ and $ a_{-} = \frac{1-e}{2} $ such that, 
\begin{equation}\label{xie+}
\xie^{+} = \left( \frac{1}{2} + \frac{\am}{2} \right)\xi + \frac{\ap}{2}|\xi|\sigma,
\end{equation}
\begin{equation}\label{xie-}
\xie^{-} = \left( \frac{1}{2} - \frac{\am}{2} \right)\xi - \frac{\ap}{2}|\xi|\sigma,
\end{equation}
Note that these two vectors $ \xie^{+} $ and $ \xie^{-} $ in the inelastic case satisfy the following relationship,
\begin{equation}
\xie^{+} + \xie^{-} = \xi, \quad |\xie^{+}|^{2} + |\xie^{-}|^{2} = \frac{1+\ap^{2}+ \am^{2}}{2}|\xi|^{2} + \ap\am|\xi|^{2} \frac{\xi\cdot\sigma}{|\xi|}.
\end{equation}
\begin{remark}
	To check this, one can compare with the elastic case by selecting $ e=1 $, which implies that $ \ap=1$ and $\am=0 $, then we find that 
	\begin{equation}
	\xi^{+} = \frac{\xi + |\xi|\sigma}{2}, \quad \xi^{-} = \frac{\xi - |\xi|\sigma}{2}, \quad |\xi^{+}|^{2} + |\xi^{-}|^{2} =|\xi|^{2}.
	\end{equation}
	which coincides with the well-known relations of elastic case.
\end{remark}

In the rest of this section, by first studying the well-posed properties of the solution to Cauchy problem \eqref{IBE} with respect to the following initial condition
\begin{equation}\label{initial}
\p(\xi, 0) = \p_{0}(\xi) \in \mathcal{K}^{\alpha}
\end{equation}
in the space of characteristic functions, we are able to finally obtain the existence and uniqueness of the original Boltzmann equation \eqref{IBEcut}-\eqref{F0cut} under cutoff assumption thanks to the inverse Fourier transformation and the embedded relation \eqref{embed}.

\subsection{Preliminary Lemma}
As in \cite{morimoto2016measure}, the equation \eqref{IBE} can be reformulated as following
\begin{equation}\label{phiequ}
\partial_{t} \varphi(t, \xi) + A\varphi(t, \xi) = \mathcal{G}^{e}_{1} [\varphi] \left(t, \xi\right) + \mathcal{G}^{e}_{2}[\varphi] \left(t, \xi\right)
\end{equation}
where $ A = \sup\limits_{q\in\mathbb{R}^{3}}\left|\Phi_{c}\left(\left|q\right| \right) \right| $ and 
\begin{align}
\mathcal{G}^{e}_{1} [\varphi] \left(t, \xi\right)  = &  \int_{\Sd^{2}} b_{c} \left(\frac{\xi\cdot\sigma}{|\xi|}\right) \int_{\bR^{3}} \hat{\Phi}_{c}(\zeta) \left[ \varphi(t, \xie^{+}-\zeta)\varphi(t, \xie^{-}+\zeta)\right]\,\rd\zeta \,\rd\sigma;\label{G1}\\
\mathcal{G}^{e}_{2} [\varphi] \left(t, \xi\right)  = & A\varphi(t, \xi)-\int_{\Sd^{2}} b_{c} \left(\frac{\xi\cdot\sigma}{|\xi|}\right) \int_{\bR^{3}} \hat{\Phi}_{c}(\zeta) \left[\varphi(t, \zeta)\varphi(t, \xi-\zeta) \right]\,\rd\zeta \,\rd\sigma. \label{G2}
\end{align}
Hence, we can further obtain the following integral form of the solution $ \varphi(t, \xi) $,
\begin{equation}
\varphi(t, \xi) = \e^{-At}\varphi_{0}(\xi) + \int_{0}^{t} \e^{-A(t-\tau)} \left( \mathcal{G}^{e}_{1}[\varphi] \left(\tau, \xi\right) + \mathcal{G}^{e}_{2}[\varphi] \left(\tau, \xi\right)  \right) \,\rd\tau.
\end{equation}

Before getting into proof of Theorem \ref{local}, we first introduce the following technical Lemmas. 

Based on the Lemma \ref{L1} above as well as some elementary inequalities, we give the following estimates, which will play a key role in controlling the inelastic Fourier variables $ \xie^{+} $ and $ \xie^{-} $.
\begin{lemma}\label{bound} 
	Let $ \xie^{+} $ and $ \xie^{-} $ be the vectors defined as \eqref{xie+} and \eqref{xie-} respectively, then for $ \alpha\in[0,2] $, we have
	\begin{equation}\label{bound+}
	\left[ \frac{\ap(1+\am)}{2} \right]^{\frac{\alpha}{2}} \left( 1+\frac{\xi\cdot\sigma}{|\xi|}\right)^{\frac{\alpha}{2}} \left|\xi\right|^{\alpha} \leq \left|\xie^{+}\right|^{\alpha} \leq \left[ \left(\frac{1+\am}{2}\right)^{2}+ \left(\frac{\ap}{2}\right)^{2}\right]^{\frac{\alpha}{2}} \left( 1+\frac{\xi\cdot\sigma}{|\xi|}\right)^{\frac{\alpha}{2}} \left|\xi\right|^{\alpha},
	\end{equation}
	and
	\begin{equation}\label{bound-}
	\left|\xie^{-}\right|^{\alpha} = \left(\frac{\ap^{2}}{2}\right)^{\frac{\alpha}{2}} \left( 1-\frac{\xi\cdot\sigma}{|\xi|}\right)^{\frac{\alpha}{2}} \left|\xi\right|^{\alpha}.
	\end{equation}
\end{lemma}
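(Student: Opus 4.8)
The plan is to reduce everything to the expansion of a square and two applications of the trivial inequality $(c-d)^{2}\ge 0$; no functional-analytic input (not even Lemma \ref{L1}) is really needed for this particular statement. I would start with $\xie^{-}$, which is exact. Since $\ap+\am=1$ we have $\tfrac12-\tfrac{\am}{2}=\tfrac{\ap}{2}$, so \eqref{xie-} collapses to $\xie^{-}=\tfrac{\ap}{2}\bigl(\xi-|\xi|\sigma\bigr)$. Expanding, $|\xie^{-}|^{2}=\tfrac{\ap^{2}}{4}\bigl(2|\xi|^{2}-2|\xi|(\xi\cdot\sigma)\bigr)=\tfrac{\ap^{2}}{2}\bigl(1-\tfrac{\xi\cdot\sigma}{|\xi|}\bigr)|\xi|^{2}$, and since $x\mapsto x^{\alpha/2}$ is nondecreasing on $[0,\infty)$ for $\alpha\in[0,2]$, raising to the power $\alpha/2$ gives \eqref{bound-} verbatim.

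For $\xie^{+}$ I would set $c:=\tfrac{1+\am}{2}$ and $d:=\tfrac{\ap}{2}$, so that \eqref{xie+} reads $\xie^{+}=c\xi+d|\xi|\sigma$, and write $t:=\tfrac{\xi\cdot\sigma}{|\xi|}\in[0,1]$ (see the next paragraph for why $t\ge 0$). Expanding the square, $|\xie^{+}|^{2}=\bigl(c^{2}+d^{2}+2cd\,t\bigr)|\xi|^{2}$. The lower bound follows from $c^{2}+d^{2}\ge 2cd$, i.e.\ $c^{2}+d^{2}+2cd\,t\ge 2cd(1+t)$, together with $2cd=\tfrac{\ap(1+\am)}{2}$; the upper bound follows from $(c^{2}+d^{2})(1+t)-\bigl(c^{2}+d^{2}+2cd\,t\bigr)=(c-d)^{2}\,t\ge 0$, which uses $t\ge 0$, together with $c^{2}+d^{2}=\bigl(\tfrac{1+\am}{2}\bigr)^{2}+\bigl(\tfrac{\ap}{2}\bigr)^{2}$. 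Taking the $\alpha/2$ power of the chain $\tfrac{\ap(1+\am)}{2}(1+t)|\xi|^{2}\le|\xie^{+}|^{2}\le(c^{2}+d^{2})(1+t)|\xi|^{2}$ then yields \eqref{bound+}.

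The only genuinely delicate point — and the closest thing to an obstacle — is the sign of $\tfrac{\xi\cdot\sigma}{|\xi|}$: the upper estimate in \eqref{bound+} is false once $\tfrac{\xi\cdot\sigma}{|\xi|}<0$, because then $(c-d)^{2}t<0$. I would therefore make explicit at the outset that, as is standard after the symmetrization of $b$ recorded following \eqref{noncutoffb}, the angular kernel $b_{c}$ is supported on the hemisphere $\{\sigma\in\Sd^{2}:\xi\cdot\sigma\ge 0\}$ (equivalently $\theta\in[0,\pi/2]$), so that throughout this lemma $\tfrac{\xi\cdot\sigma}{|\xi|}\in[0,1]$; the case $\xi=0$ is trivial since all terms vanish. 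With this convention in place the argument is just the one-line expansion above.
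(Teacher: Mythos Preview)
Your argument is correct and complete; the reduction to $(c-d)^{2}\ge 0$ together with the hemisphere restriction $\tfrac{\xi\cdot\sigma}{|\xi|}\ge 0$ (already adopted in the paper after \eqref{noncutoffnu}) is exactly what is needed, and your observation that Lemma~\ref{L1} plays no role here is accurate despite the paper's preamble suggesting otherwise. The paper itself does not supply a proof but only cites \cite[Lemma 3.4]{Qi20}, so there is no in-paper argument to compare against; your direct expansion is precisely the elementary computation one would expect to find behind that citation.
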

\begin{proof}
	See our preceding work \cite[Lemma 3.4]{Qi20} for complete proof.
\end{proof}


Furthermore, to obtain the existence and uniqueness of measure-valued solution by Banach Contraction Theorem, such as the elastic case as \cite[Lemma 2.6]{morimoto2016measure}, the following ``inelastic version" Lemma \ref{GG} is given by more delicate analysis.
\begin{lemma}\label{GG}
	 For any restitution coefficient $ e\in (0,1] $ and characteristic function $ \p \in \mathcal{K} $, both of $ \mathcal{G}^{e}_{1}[\varphi] $ and $ \mathcal{G}^{e}_{2}[\varphi] $, defined by \eqref{G1} and \eqref{G2} respectively, are continuous and positive-definite for any $ \varphi\in\mathcal{K} $. Furthermore, if $ \gamma < 0 $ and $ 0< \alpha \leq 2 $, 
	 then for any characteristic functions $ \varphi, \tilde{\p} \in \mathcal{K}^{\alpha}$, there exists a constant $ C_{e}>0 $ such that
	 \begin{equation}\label{GG12}
	 \left|\mathcal{G}^{e}_{1}[\varphi] + \mathcal{G}^{e}_{2}[\varphi] - \mathcal{G}^{e}_{1}[\tilde{\p}] - \mathcal{G}^{e}_{2}[\tilde{\p}] \right| \leq \left( A+C_{e}\right) \left\| \varphi - \tilde{\p} \right\|_{\alpha} \left| \xi\right|^{\alpha},
	 \end{equation}
	 holds for all $ \xi\in\mathbb{R}^{3} $.
\end{lemma}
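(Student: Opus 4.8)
The plan is to prove the two assertions separately: the qualitative one (continuity and positive-definiteness of $\mathcal{G}^{e}_{1}[\varphi]$, $\mathcal{G}^{e}_{2}[\varphi]$) from the probabilistic representation of the operators, and the quantitative bound \eqref{GG12} from a reformulation of the cutoff equation together with Lemmas \ref{L1}, \ref{bound} and \ref{PhiDecay}. For the first part I would write $\varphi=\hat{\mu}$ with $\mu\in P_{0}(\mathbb{R}^{3})$ and undo the change of variables of Appendix \ref{appen1} (the one leading from $Q_{e}$ to \eqref{IBE}) to obtain
\begin{equation*}
\mathcal{G}^{e}_{1}[\varphi](\xi)=\int_{\bR^{3}}\int_{\bR^{3}}\Phi_{c}(|v-\vs|)\left[\int_{\Sd^{2}}b_{c}\left(\tfrac{v-\vs}{|v-\vs|}\cdot\sigma\right)\e^{-\im v'\cdot\xi}\,\rd\sigma\right]\rd\mu(v)\,\rd\mu(\vs),
\end{equation*}
with $v'=v'(v,\vs,\sigma)$ as in \eqref{ve}. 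For fixed $v,\vs,\sigma$ the map $\xi\mapsto\e^{-\im v'\cdot\xi}$ is a characteristic function (it is $\widehat{\delta_{v'}}$), hence positive-definite, and since $b_{c}\ge0$, $\Phi_{c}\ge0$ the inner $\sigma$-integral is a non-negative superposition of positive-definite functions; so is the $\mu$-integral, whence $\mathcal{G}^{e}_{1}[\varphi]$ is positive-definite by Lemma \ref{character2}, and it is continuous as the Fourier transform of a finite non-negative measure (total mass $\int\int b_{c}\Phi_{c}\,\rd\mu\,\rd\mu<\infty$, $\Phi_{c}$ being bounded and $b_{c}$ integrable). The same computation identifies the integral term in $\mathcal{G}^{e}_{2}[\varphi]$ with $\widehat{\nu}$, where $\nu$ has density $v\mapsto\int_{\bR^{3}}\Phi_{c}(|v-\vs|)\,\rd\mu(\vs)$ with respect to $\mu$; since $A=\sup_{q}|\Phi_{c}(|q|)|$ dominates this density, $A\mu-\nu$ is a finite non-negative measure and $\mathcal{G}^{e}_{2}[\varphi]=\widehat{A\mu-\nu}$ is continuous and positive-definite.

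For \eqref{GG12} I would first split off the linear part of $\mathcal{G}^{e}_{2}$: since $|A\varphi(\xi)-A\tilde{\p}(\xi)|\le A\|\varphi-\tilde{\p}\|_{\alpha}|\xi|^{\alpha}$, this produces the $A$ in $(A+C_{e})$, and it remains to bound $\mathcal{H}[\varphi]-\mathcal{H}[\tilde{\p}]$ by $C_{e}\|\varphi-\tilde{\p}\|_{\alpha}|\xi|^{\alpha}$, where $\mathcal{H}[\varphi](\xi)=\int_{\Sd^{2}}b_{c}\int_{\bR^{3}}\hat{\Phi}_{c}(\zeta)[\varphi(\xie^{+}-\zeta)\varphi(\xie^{-}+\zeta)-\varphi(\zeta)\varphi(\xi-\zeta)]\,\rd\zeta\,\rd\sigma$. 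The key step is to use $\int_{\Sd^{2}}b_{c}\,\rd\sigma=1$ and the substitution $\eta=\xie^{-}+\zeta$ (so that $\xie^{+}-\zeta=\xi-\eta$ by $\xie^{+}+\xie^{-}=\xi$) to rewrite
\begin{equation*}
\mathcal{H}[\varphi](\xi)=\int_{\Sd^{2}}b_{c}\left(\tfrac{\xi\cdot\sigma}{|\xi|}\right)\int_{\bR^{3}}\left[\hat{\Phi}_{c}(\eta-\xie^{-})-\hat{\Phi}_{c}(\eta)\right]\varphi(\xi-\eta)\varphi(\eta)\,\rd\eta\,\rd\sigma,
\end{equation*}
so that $\mathcal{H}[\varphi]-\mathcal{H}[\tilde{\p}]=\int b_{c}\int[\hat{\Phi}_{c}(\eta-\xie^{-})-\hat{\Phi}_{c}(\eta)]\,[\varphi(\xi-\eta)\varphi(\eta)-\tilde{\p}(\xi-\eta)\tilde{\p}(\eta)]\,\rd\eta\,\rd\sigma$. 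Here the product bracket is $\le\|\varphi-\tilde{\p}\|_{\alpha}(|\xi-\eta|^{\alpha}+|\eta|^{\alpha})\lesssim\|\varphi-\tilde{\p}\|_{\alpha}(|\xi|^{\alpha}+|\eta|^{\alpha})$, while the kernel bracket vanishes at $\xi=0$ and, by the smoothness and rapid decay of $\hat{\Phi}_{c}$ (Lemma \ref{PhiDecay}) together with $|\xie^{-}|^{\alpha}=(\ap^{2}/2)^{\alpha/2}(1-\tfrac{\xi\cdot\sigma}{|\xi|})^{\alpha/2}|\xi|^{\alpha}\lesssim_{e}|\xi|^{\alpha}$ (Lemma \ref{bound}), is $\lesssim_{e,N}\min(1,|\xi|)\langle\eta\rangle^{-2N}$.

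I would then split into $|\xi|\ge1$ and $|\xi|\le1$. For $|\xi|\ge1$, bounding the kernel bracket crudely by $|\hat{\Phi}_{c}(\eta-\xie^{-})|+|\hat{\Phi}_{c}(\eta)|$, translating $\eta$, and using $\int|\hat{\Phi}_{c}|(1+|\eta|^{\alpha})\,\rd\eta<\infty$ gives $|\mathcal{H}[\varphi]-\mathcal{H}[\tilde{\p}]|\lesssim_{e}\|\varphi-\tilde{\p}\|_{\alpha}(|\xi|^{\alpha}+1)\lesssim_{e}\|\varphi-\tilde{\p}\|_{\alpha}|\xi|^{\alpha}$. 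For $|\xi|\le1$ I would Taylor-expand $\hat{\Phi}_{c}(\eta-\xie^{-})-\hat{\Phi}_{c}(\eta)=-\xie^{-}\!\cdot\nabla\hat{\Phi}_{c}(\eta)+O(|\xie^{-}|^{2}\langle\eta\rangle^{-2N})$: the quadratic remainder contributes $\lesssim_{e}|\xi|^{2}\|\varphi-\tilde{\p}\|_{\alpha}\le|\xi|^{\alpha}\|\varphi-\tilde{\p}\|_{\alpha}$ since $\alpha\le2$, and for the principal term one uses $\int_{\Sd^{2}}b_{c}(\tfrac{\xi\cdot\sigma}{|\xi|})\,\xie^{-}\,\rd\sigma=c_{e}\,\xi$ (a constant multiple of $\xi$) to reduce it to $-c_{e}\,\xi\cdot\int\nabla\hat{\Phi}_{c}(\eta)[\varphi(\xi-\eta)\varphi(\eta)-\tilde{\p}(\xi-\eta)\tilde{\p}(\eta)]\,\rd\eta$; its value at $\xi=0$ is $-c_{e}\,\xi\cdot\int\nabla\hat{\Phi}_{c}(\eta)(|\varphi(\eta)|^{2}-|\tilde{\p}(\eta)|^{2})\,\rd\eta=0$, because $\nabla\hat{\Phi}_{c}$ is odd and $|\varphi|^{2}-|\tilde{\p}|^{2}$ even, and the remaining $\xi$-increment, estimated through Lemma \ref{L1}\eqref{l3}, gives a bound of order $|\xi|^{1+\alpha/2}\le|\xi|^{\alpha}$ for $|\xi|\le1$ (the constant here also absorbing $\|\varphi-1\|_{\alpha}+\|\tilde{\p}-1\|_{\alpha}$, which is harmless since the contraction is run on a bounded subset of $\mathcal{K}^{\alpha}$). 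Collecting the two regimes then yields \eqref{GG12}.

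The main obstacle I expect is exactly the small-$|\xi|$ regime for $1<\alpha\le2$: the $\zeta$-integral forced by the soft-potential factor $\hat{\Phi}_{c}$ makes the naive estimate only $O(|\xi|)$, which is weaker than the required $O(|\xi|^{\alpha})$, and recovering the sharp power hinges on the gain--loss cancellation (made visible through the difference $\hat{\Phi}_{c}(\eta-\xie^{-})-\hat{\Phi}_{c}(\eta)$), on the parity of $\hat{\Phi}_{c}$, on Lemma \ref{bound} to trade the $\xie^{\pm}$-weights for $\xi$-weights, and on the rapid decay of $\hat{\Phi}_{c}$ --- which is available precisely because the soft-potential cutoff $\phi_{c}$ removes the origin singularity of $|q|^{\gamma}$. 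The bookkeeping of the $\sigma$-integrals (note that $\xie^{\pm}$ depend on $\sigma$ through $|\xi|\sigma$) is the most delicate routine part.
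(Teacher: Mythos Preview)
Your treatment of the positive-definiteness is correct and in fact slightly cleaner than the paper's (which introduces a Gaussian mollification $\e^{-|\cdot|^{2}/2n}$ before identifying $\mathcal{G}^{e,n}_{1}[\varphi]$ as the Fourier transform of a nonnegative density). Your rewriting of $\mathcal{H}[\varphi]$ via $\eta=\xie^{-}+\zeta$, the split $|\xi|\ge 1$ versus $|\xi|\le 1$, and the quadratic-remainder estimate are all the same as in the paper.

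The gap is in the principal term for $|\xi|\le 1$ and $1<\alpha\le 2$. Your argument there produces a bound $C_{e}(\|\varphi-1\|_{\alpha}+\|\tilde{\varphi}-1\|_{\alpha})\,|\xi|^{1+\alpha/2}$, not $C_{e}\|\varphi-\tilde{\varphi}\|_{\alpha}\,|\xi|^{\alpha}$; you acknowledge this and call it ``harmless'', but it means you have \emph{not} proved \eqref{GG12} as stated, since there $C_{e}$ is required to be independent of $\varphi,\tilde{\varphi}$. This is also not harmless for the downstream use: the proof of Theorem~\ref{local} applies \eqref{GG12} on all of $\chi^{\alpha}_{T}$ (both to show $\mathcal{P}$ maps $\chi^{\alpha}_{T}$ into itself with $\tilde{\varphi}=1$, and to contract), not on an a priori bounded ball, so a constant that blows up with $\|\varphi-1\|_{\alpha}$ would force a reworking of that proof.

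The paper avoids this dependence by exploiting a symmetry you do not use: since the product factor $P(\eta)=\varphi(\xi-\eta)\varphi(\eta)-\tilde{\varphi}(\xi-\eta)\tilde{\varphi}(\eta)$ is invariant under $\eta\mapsto\xi-\eta$, one may symmetrize the kernel bracket and replace $\hat{\Phi}_{c}(\eta-\xie^{-})-\hat{\Phi}_{c}(\eta)$ by the second-difference combination
\[
\tfrac{1}{2}\bigl[\hat{\Phi}_{c}(\eta-\xie^{-})+\hat{\Phi}_{c}(\xi-\eta)-\hat{\Phi}_{c}(\xi-\eta-\xie^{-})-\hat{\Phi}_{c}(\eta)\bigr],
\]
which is then shown (via Lemma~\ref{PhiDecay} and Taylor expansion) to be of size $O(|\xi|^{2})$ times a $\zeta$-integrable weight. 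This places all the $|\xi|$-smallness in the \emph{kernel}, so the only $\varphi,\tilde{\varphi}$ input needed is the universal bound $|P(\eta)|\le \|\varphi-\tilde{\varphi}\|_{\alpha}(|\xi-\eta|^{\alpha}+|\eta|^{\alpha})$, and the resulting $C_{e}$ is genuinely uniform. Your $\sigma$-averaging and parity trick extracts one power of $|\xi|$ from the kernel but then has to find the remaining $|\xi|^{\alpha-1}$ inside $P$, which is exactly where the unwanted $\|\varphi-1\|_{\alpha}$ enters; the paper's $\zeta$-symmetrization extracts $|\xi|^{2}\ge|\xi|^{\alpha}$ from the kernel alone.
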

\begin{proof}
	\textit{Step 1: } According to the Theorem \ref{character1} and Lemma \ref{character2}, it suffices to show that $ \mathcal{G}^{e}_{1}[\varphi] $ is continuous and positive-definite by proving $ \mathcal{G}^{e}_{1}[\varphi] $ is a characteristic function for any $ \varphi\in\mathcal{K} $. Let
	\begin{equation}
	\mathcal{G}^{e,n}_{1}[\varphi]\left(t, \xi\right) =  \int_{\Sd^{2}} b_{c} \left(\frac{\xi\cdot\sigma}{|\xi|}\right) \int_{\bR^{3}} \hat{\Phi}_{c}(\zeta) \left[ \varphi(t, \xie^{+}-\zeta) \e^{-\frac{|\xi^{+}_{e} - \zeta|^{2}}{2n}}   \varphi(t, \xie^{-}+\zeta) \e^{-\frac{|\xi^{-}_{e}+\zeta|^{2}}{2n}} \right] \,\rd\zeta \,\rd\sigma.
	\end{equation}
	Since both of the angular collision kernel $ b_{c} $ and kinetic collision kernel $ \hat{\Phi}_{c} $ are integrable, together with the Lemma \ref{bound} for $ \xi^{+}_{e} $ and $ \xi^{-}_{e} $, we're able to apply the Lebesgue dominated convergence theorem to obtain that
	\begin{equation}
	\mathcal{G}^{e,n}_{1}[\varphi]\left(\cdot, \xi\right) \rightarrow \mathcal{G}^{e}_{1}[\varphi]\left(\cdot, \xi\right) \ \text{pointwisely as}\ n\rightarrow\infty
	\end{equation}
	Then, following the proof of \cite[Lemma 2.1]{PTfourier1996}, we only need to show that $ \mathcal{G}^{e,n}_{1}[\varphi] $ is a characteristic function with respect to $ \xi $ for each $ n $. Denote $ F(v) = \mathcal{F}^{-1}(\p) $, then $ \mathcal{G}^{e,n}_{1}[\varphi](\cdot,\xi) $ is the Fourier transform of the following positive density
	\begin{equation}
	G_{1}^{e,n}[F](\cdot,v) = \int_{\bR^{3}} \int_{\Sd^{2}} b_{c}\left(\frac{v-v_{*}}{|v-v_{*}|} \cdot \sigma\right) \Phi_{c}\left(|v-v_{*}|\right) f_{n}(\cdot, v'_{*}) f_{n}(\cdot, v') \,\rd \sigma \,\rd v_{*},
	\end{equation}
	where $ v' $ and $ v'_{*} $ are given by \eqref{ve} and
	\begin{equation}\label{ff}
	f_{n}(v) = \int_{\bR^{3}} \omega_{n}(v-u) \,\rd F(u), \quad \omega_{n}(v) = \left(\frac{n}{2\pi}\right)^{\frac{3}{2}} \e^{-\frac{n}{2}|v|^{2}}.
	\end{equation}
	On the other hand, note that $ \mathcal{G}^{e}_{2}[\varphi] $ actually has the same definition as the elastic counterpart $ \mathcal{G}_{2}[\varphi] $, and one can refer to the \cite[Lemma 2.6]{morimoto2016measure} for the proof of $ \mathcal{G}^{e}_{2}[\varphi] $. This completes the proof of the continuity and positive-definite property of $ \mathcal{G}^{e}_{1}[\varphi] $ and $ \mathcal{G}^{e}_{2}[\varphi] $ for any $ \varphi\in\mathcal{K} $.
	
	\textit{Step 2:} To prove the estimate \eqref{GG12}, we start from substituting the $ \varphi $ and $ \tilde{\p} $ into \eqref{G1}-\eqref{G2} and taking the subtraction
	\begin{equation}\label{soft1}
	\begin{split}
	&\left|\mathcal{G}^{e}_{1}[\varphi] + \mathcal{G}^{e}_{2}[\varphi] - \mathcal{G}^{e}_{1}[\tilde{\p}] - \mathcal{G}^{e}_{2}[\tilde{\p}] \right| \\
	\leq& \int_{\Sd^{2}} b_{c} \left(\frac{\xi\cdot\sigma}{|\xi|}\right) \int_{\bR^{3}} \left| \hat{\Phi}_{c}(\zeta - \xie^{-}) - \hat{\Phi}_{c}(\zeta)\right|  \left| \varphi(\zeta)\varphi(\xi-\zeta) - \tilde{\p}(\zeta)\tilde{\p}(\xi-\zeta)\right| \,\rd\zeta \,\rd\sigma\\
	&+ A \left\| \varphi - \tilde{\p} \right\|_{\alpha} \left| \xi\right|^{\alpha},
	\end{split}
	\end{equation}
	where we utilize the change of variable $ \xie^{-} + \zeta \rightarrow \zeta $ and the relation $ \xie^{+} + \xie^{-} = \xi $ for \eqref{G1}.\\
       By noticing that $ \hat{\Phi}_{c}(\zeta) $ is a real-valued function and satisfies $ \hat{\Phi}_{c}(\zeta) = \hat{\Phi}_{c}(-\zeta) $, we apply the transformation $ \zeta \mapsto \xi - \zeta $ and find that
	\begin{equation}\label{need1}
	\begin{split}
	&\int_{\Sd^{2}} b_{c} \left(\frac{\xi\cdot\sigma}{|\xi|}\right) \int_{\bR^{3}} \left[ \hat{\Phi}_{c}(\zeta - \xie^{-}) - \hat{\Phi}_{c}(\zeta)\right]  \varphi(\zeta) \varphi(\xi- \zeta)  \,\rd\zeta \,\rd\sigma\\
	= & \int_{\Sd^{2}} b_{c} \left(\frac{\xi\cdot\sigma}{|\xi|}\right) \int_{\bR^{3}} \left[ \hat{\Phi}_{c}(\xi - \zeta - \xie^{-}) - \hat{\Phi}_{c}(\xi - \zeta)\right]  \varphi(\xi - \zeta) \varphi(\zeta)  \,\rd(\xi - \zeta) \,\rd\sigma\\
	= & \int_{\Sd^{2}} b_{c} \left(\frac{\xi\cdot\sigma}{|\xi|}\right) \int_{\bR^{3}} \left[ \hat{\Phi}_{c}(\xi - \zeta) - \hat{\Phi}_{c}(\xi - \zeta - \xie^{-}) \right]  \varphi(\xi - \zeta) \varphi(\zeta)  \,\rd\zeta \,\rd\sigma\\
	= & \frac{1}{2} \int_{\Sd^{2}} b_{c} \left(\frac{\xi\cdot\sigma}{|\xi|}\right) \int_{\bR^{3}} \left[ \hat{\Phi}_{c}(\zeta - \xie^{-}) + \hat{\Phi}_{c}(\xi - \zeta) - \hat{\Phi}_{c}(\xi - \zeta - \xie^{-}) - \hat{\Phi}_{c}(\zeta) \right]  \varphi(\xi - \zeta) \varphi(\zeta)  \,\rd\zeta \,\rd\sigma
	\end{split}
	\end{equation}
	which also works if replacing $ \p $ by $ \tilde{\p} $.\\
	Additionally, by the direct calculation, we have
	\begin{equation}\label{need2}
	\begin{split}
	&\left| \varphi(\zeta)\varphi(\xi-\zeta) - \tilde{\p}(\zeta)\tilde{\p}(\xi-\zeta)\right|\\
	\leq & \left| \varphi(\zeta)\varphi(\xi-\zeta) - \varphi(\zeta)\tilde{\p}(\xi-\zeta) + \varphi(\zeta)\tilde{\p}(\xi-\zeta) - \tilde{\p}(\zeta)\tilde{\p}(\xi-\zeta) \right|\\
	\leq &\left| \varphi(\zeta) \right| \left\|\varphi -\tilde{\p} \right\|_{\alpha} \left|\xi-\zeta \right|^{\alpha} + \left\|\varphi -\tilde{\p} \right\|_{\alpha} \left| \zeta \right|^{\alpha} \left|\tilde{\p}(\xi-\zeta)\right|\\
	\leq & \frac{\left|\xi-\zeta \right|^{\alpha} + \left| \zeta \right|^{\alpha}}{\left| \xi \right|^{\alpha}} \left\|\varphi -\tilde{\p} \right\|_{\alpha} \left| \xi \right|^{\alpha},
	\end{split}
	\end{equation}
	where the property of the characteristic functions that $ \left| \varphi(\zeta) \right| < 1 $ and $ \left|\tilde{\p}(\xi-\zeta)\right| < 1 $ is utilized in the last inequality.\\
       Hence, combing the estimates \eqref{need1} and \eqref{need2} , it only remains to show that
	\begin{equation}\label{softpur}
	\int_{\bR^{3}} \left| \hat{\Phi}_{c}(\zeta - \xie^{-}) + \hat{\Phi}_{c}(\xi - \zeta) - \hat{\Phi}_{c}(\xi - \zeta - \xie^{-}) - \hat{\Phi}_{c}(\zeta) \right|  \frac{\left|\xi-\zeta \right|^{\alpha} + \left| \zeta \right|^{\alpha}}{\left| \xi \right|^{\alpha}}  \,\rd\zeta \leq C_{e}.
	\end{equation}
	In fact, the $ |\xi| > 1 $ case can be directly obtained, since no singularity exists in the estimate above. Then, for $ |\xi| \leq 1 $, we find that
	\begin{equation}\label{soft11}
	\begin{split}
	&\left| \hat{\Phi}_{c}(\zeta - \xie^{-})  - \hat{\Phi}_{c}(\zeta) - \nabla\hat{\Phi}_{c}(\zeta)\cdot \xie^{-} \right|\\
	= &\left| \int_{0}^{1} (1-\tau)(\xie^{-})^{T} \nabla^{2}\hat{\Phi}_{c}(-\zeta + \tau\xie^{-})(\xie^{-}) \,\rd\tau  \right|\\
	\lesssim & \frac{\ap^{2}}{2} |\xi|^{2} \int_{0}^{1} \frac{1}{\left\langle -\zeta + \tau\xie^{-} \right\rangle^{2N} } \,\rd\tau,
	\end{split}
	\end{equation}
	and
	\begin{equation}\label{soft12}
	\begin{split}
	&\left| \hat{\Phi}_{c}(\xi - \zeta) - \hat{\Phi}_{c}(\xi - \zeta - \xie^{-}) -\nabla\hat{\Phi}_{c}(-\zeta+\xi)\cdot (-\xie^{-})  \right|\\
	= &\left| \int_{0}^{1} (1-\tau)(-\xie^{-})^{T} \nabla^{2}\hat{\Phi}_{c}(-\zeta +\xi - \tau\xie^{-})(-\xie^{-}) \,\rd\tau  \right|\\
	\lesssim & \frac{\ap^{2}}{2} |\xi|^{2} \int_{0}^{1} \frac{1}{\left\langle -\zeta + \xi - \tau\xie^{-} \right\rangle^{2N} } \,\rd\tau.
	\end{split}
	\end{equation}
	where we use the fact $ |\xie^{-}|^{2} \leq \frac{\ap^{2}}{2} |\xi|^{2} $ and Lemma \ref{PhiDecay} in the two inequalities above.\\
	As a consequence, combining \eqref{soft11} and \eqref{soft12}, we further obtain
	\begin{equation}
	\begin{split}
	&\left| \hat{\Phi}_{c}(\zeta - \xie^{-}) + \hat{\Phi}_{c}(\xi - \zeta) - \hat{\Phi}_{c}(\xi - \zeta - \xie^{-}) - \hat{\Phi}_{c}(\zeta) \right|\\
	 \lesssim& \left| \nabla\hat{\Phi}_{c}(\zeta)\cdot \xie^{-} +  \nabla\hat{\Phi}_{c}(-\zeta+\xi)\cdot (-\xie^{-})  \right| + \frac{\ap^{2}}{2} |\xi|^{2} \int_{0}^{1} \frac{1}{\left\langle -\zeta + \tau\xie^{-} \right\rangle^{2N} } + \frac{1}{\left\langle -\zeta + \xi - \tau\xie^{-} \right\rangle^{2N} } \,\rd\tau
	\end{split}
	\end{equation}
	Moreover, considering the following fact that, since $ |\xie^{-}|^{2} \leq \frac{\ap^{2}}{2} |\xi|^{2}  $,
	\begin{equation}
	\begin{split}
	\left|  \nabla\hat{\Phi}_{c}(\zeta)\cdot \xie^{-} +  \nabla\hat{\Phi}_{c}(-\zeta+\xi)\cdot (-\xie^{-}) \right| =& \left| \int_{0}^{1} (\xie^{-})^{T} \nabla^{2}\hat{\Phi}_{c}(-\zeta + \tau\xie^{-})(\xie^{-}) \,\rd\tau  \right|\\
	\leq &  \frac{\ap^{2}}{2} |\xi|^{2} \int_{0}^{1} \frac{1}{\left\langle -\zeta + \tau\xie^{-} \right\rangle^{2N} } \,\rd\tau,
	\end{split}
	\end{equation}
	we finally conclude that, if $  |\xi| \leq 1  $,
	\begin{equation}
	\begin{split}
	&\left| \hat{\Phi}_{c}(\zeta - \xie^{-}) + \hat{\Phi}_{c}(\xi - \zeta) - \hat{\Phi}_{c}(\xi - \zeta - \xie^{-}) - \hat{\Phi}_{c}(\zeta) \right|\\
	\lesssim& \frac{\ap^{2}}{2} |\xi|^{2} \int_{0}^{1} \frac{1}{\left\langle -\zeta + \tau\xie^{-} \right\rangle^{2N} } + \frac{1}{\left\langle -\zeta + \xi - \tau\xie^{-} \right\rangle^{2N} } + \frac{1}{\left\langle -\zeta + \tau\xie^{-} \right\rangle^{2N} } \,\rd\tau\\
	\leq & C_{e}
	\end{split}
	\end{equation}
	This completes the proof of \eqref{softpur}, which further implies our desired estimate \eqref{GG12} with the help of \eqref{soft1}.
\end{proof}

\subsection{Proof of the Theorem \ref{local}}

In this subsection, the Banach Contraction Theorem is applied to prove the existence and uniqueness of the solution to cutoff equation \eqref{phiequ}. To achieve this, we define the following non-linear operator: for fixed initial datum $ \p_{0}(\xi) \in \K^{\alpha}$,
\begin{equation}\label{P}
\mathcal{P}[\p]\left(t,\xi\right) := \p_{0}(\xi) \e^{-At} + \int_{0}^{t} \e^{-A(t-\tau)} \left( \mathcal{G}^{e}_{1} [\p] \left(\tau, \xi\right) + \mathcal{G}^{e}_{2}[\p] \left(\tau, \xi\right) \right) \,\rd\tau.
\end{equation}
Then we're ready to prove the local existence and uniqueness by showing that operator $\mathcal{P} : \chi^{\alpha}_{T} \mapsto C( [0,T], \K^{\alpha} )$ has a unique fixed point in the space  $\chi^{\alpha}_{T} \subset C( [0,T], \K^{\alpha} )$ defined as
\begin{equation}
\chi^{\alpha}_{T} := \left\lbrace  \p\in C\left( [0,T],\K^{\alpha} \right) : \sup\limits_{t\in [0,T]}\left\|\p(t, \cdot)\right\|_{\alpha} < \infty \right\rbrace,
\end{equation}
which is a complete metric space with respect to the induced norm
\begin{equation}
\left\| \cdot \right\|_{\chi^{\alpha}_{T}}: =\sup\limits_{t\in [0,T]} \left\| \cdot \right\|_{\alpha}.
\end{equation}

\begin{proof}
For any $ \p(t,\xi) \in \mathcal{K} $, the continuity and positivity of $ \mathcal{P}[\p](t,\xi) $ can be directly obtained. Indeed, from the Lemma \ref{GG}, we can find that $ \mathcal{G}^{e}_{1}[\p] + \mathcal{G}^{e}_{2}[\p]$ is continuous and positive-definite for every $ \tau\in\left[0,t\right] $; it then follows from Lemma \ref{character2} that $ \mathcal{P}[\p](t,\xi) \in \K^{\alpha}$ .\\
(i) To show that the non-linear operator $ \mathcal{P} $ maps $ \chi^{\alpha}_{T} $ into itself. By observing that $ \mathcal{G}^{e}_{1}[\p] (t,0) + \mathcal{G}^{e}_{2}[\p] (t,0) = A $, we have
\begin{equation}
\mathcal{P}[\p](t,0) =1 
\end{equation}
and then, for every $  \p\in \chi^{\alpha}_{T} $,
\begin{equation}
\begin{split}
\mathcal{P}[\p](t,\xi) - 1 = \e^{-At}(\p_{0} - 1) + \int_{0}^{t}& \e^{-A(t-\tau)} ( \mathcal{G}^{e}_{1}[\p] \left(\tau, \xi\right) + \mathcal{G}^{e}_{2}[\p] \left(\tau, \xi\right)\\
& - \mathcal{G}^{e}_{1}[1](\tau,\xi) - \mathcal{G}^{e}_{2}[1](\tau,\xi) ) \,\rd\tau
\end{split}
\end{equation}
Furthermore, by Lemma \ref{GG}, we find that, for any $ \tau \in (0,t) $,
\begin{equation}
\left| \mathcal{P}[\p](\xi,t) - 1 \right| \leq \left\| \p_{0} - 1 \right\|_{\alpha} \left| \xi \right|^{\alpha} + \left( A + C_{e} \right) \int_{0}^{t} \left\| \p(\xi,\tau) - 1 \right\|_{\alpha} \ \rd \tau \left| \xi \right|^{\alpha}
\end{equation} 
After dividing the inequality above by $ |\xi|^{\alpha} $ and computing the supremum with respect to the variable $ \xi\in\mathbb{R}^{3} $ and $ t\in\left[0,T\right] $, we then obtain that
\begin{equation}
\sup_{t \in [0,T]} \| \mathcal{P}[\p](t,\cdot) - 1 \|_{\alpha} \leq \| \p_{0} -1\|_{\alpha} + \left( A + C_{e} \right) T \| \p - 1 \|_{\chi^{\alpha}_{T}} < \infty
\end{equation}
which implies that the operator $ \mathcal{P} $ maps $ \chi^{\alpha}_{T} $ into itself.\\
(ii) To prove that $ \mathcal{P}[\p] \in \K^{\alpha}$ is a contraction in $\chi^{\alpha}_{T}$, we introduce another $ \mathcal{P}[\tilde{\p}] \in \K^{\alpha} $, and make the subtraction between them. Then for the same initial datum $ \p_{0} $, we have, 
\begin{equation}
\begin{split}
\left| \mathcal{P}[\p](t,\xi) - \mathcal{P}[\tilde{\p}](t,\xi) \right| \leq& \int_{0}^{t} \e^{-A(t-\tau)} (\mathcal{G}^{e}_{1}[\p] (\tau, \xi) + \mathcal{G}^{e}_{2}[\p] (\tau, \xi) \\& - \mathcal{G}^{e}_{1}[\tilde{\p}] (\tau, \xi) -\mathcal{G}^{e}_{2}[\tilde{\p}] (\tau, \xi) ) \,\rd\tau\\
\leq& ( A + C_{e} ) T \left\|\p - \tilde{\p}  \right\|_{\chi^{\alpha}_{T}} \left| \xi \right|^{\alpha}
\end{split}
\end{equation}
where we utilize the Lemma \ref{GG} in the last inequality. Consequently, after dividing the inequality above by $ |\xi|^{\alpha} $ with respect to the variable $ \xi\in\mathbb{R}^{3} $, we can obtain
\begin{equation}
\left\| \mathcal{P}[\p] - \mathcal{P}[\tilde{\p}] \right\|_{\chi^{\alpha}_{T}} \leq ( A + C_{e} ) T \left\|\p - \tilde{\p}  \right\|_{\chi^{\alpha}_{T}}.
\end{equation}
Combining (i) and (ii) above, the Banach Contraction Theorem admits the unique solution of \eqref{IBEcut} in the space $ \chi^{\alpha}_{T} $ provided that $ T < \frac{1}{A+C_{e}}. $

Note that we construct the unique solution on the time interval $ \left[ 0 ,T\right] $, where $ T $ is independent of the initial datum, therefore, by the continuation argument, we can extend the unique solution to $ \left[ T,2T\right] $ by choosing $ \p(T,\xi) $ as the initial datum. Consequently, by repeating the same procedure, it suffices to conclude that there exists an unique solution $ \p(t,\xi) \in C( [0,\infty), \K^{\alpha} ) $ to the Cauchy problem \eqref{IBE} associated with any initial datum \eqref{initial}. 

As a result, by noticing the inclusive relation $ \mathcal{F}^{-1}(\mathcal{K}^{\alpha_{0}}) \subset P_{\alpha} $, we can finally prove the existence and uniqueness of the solution to the original Cauchy problem \eqref{IBEcut}-\eqref{F0cut} under cutoff assumption.
\end{proof}

\subsection{Conservative and Dissipative Property}
In this subsection, the conservative and dissipative properties of the measure-valued solution obtained by the Theorem \ref{local} will be derived in the following Corollary \ref{DissCutoff}.
\begin{corollary}\label{DissCutoff}
	For the $ F_{t} \in C\left(  \left[0,\infty\right), P_{\alpha} \right) $, which is the unique solution to Cauchy problem \eqref{IBEcut}-\eqref{F0cut} constructed in Theorem \ref{local}, if the initial datum $ F_{0} \in P_{\alpha_{0}}(\mathbb{R}^{3}) $ with $ \alpha_{0}\in \left(0,2\right] $, it satisfies
	\begin{equation}\label{energydiss}
	\forall t \geq 0, \quad \int_{\bR^{3}} |v|^{2} \,\rd F_{t}(v) \leq \int_{\bR^{3}} |v|^{2} \,\rd F_{0}.
	\end{equation}
       furthermore, if the initial datum $ F_{0} \in P_{\alpha_{0}}(\mathbb{R}^{3}) $ with $ \alpha_{0} \geq 1 $, it satisfies
       \begin{equation}\label{momentumcon}
       \forall t \geq 0, \quad \int_{\bR^{3}} v_{j} \,\rd F_{t}(v) = \int_{\bR^{3}} v_{j}\,\rd F_{0}, \quad j=1,2,3.
       \end{equation}
\end{corollary}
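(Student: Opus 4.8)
Both identities concern the test functions $\psi(v)=v_j$ and $\psi(v)=|v|^2$, which do not lie in $C_b^2(\mathbb R^3)$; the plan is therefore to test the weak formulation of Theorem~\ref{local} against smooth compactly supported truncations and then pass to the limit. The structural fact that makes this work in the cutoff setting is that $\Phi_c(|v-v_*|)=|v-v_*|^\gamma\phi_c(|v-v_*|)$ is bounded and supported in the annulus $\{1/2r\le|v-v_*|\le 2r\}$, so on the support of the collision integrand $|v'-v|,\ |v'_*-v_*|\le\frac{1+e}{2}|v-v_*|\le 2r$: a collision displaces each velocity by at most $2r$. Hence, for any such truncation, the collisional difference $\psi(v')+\psi(v'_*)-\psi(v)-\psi(v_*)$ vanishes away from the large-velocity region where the truncation is not yet faithful, and there it is controlled by the size of $\psi$.

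\emph{Momentum (assume $\alpha_0\ge1$, so $\int\langle v\rangle\,\rd F_0<\infty$).} I first record an a priori first-moment bound. Fix a radial, radially nonincreasing $\chi\in C_c^\infty$ with $0\le\chi\le1$, $\chi=1$ on $B_1$, $\mathrm{supp}\,\chi\subset B_2$, and a smooth nondecreasing $1$-Lipschitz $\rho_R$ with $\rho_R(s)=s$ for $s\le R$ and $\rho_R(s)=R+1$ for $s\ge R+2$; then $\psi_R:=\rho_R(\langle\cdot\rangle)\in C_b^2$, and since $\rho_R$ and $\langle\cdot\rangle$ are $1$-Lipschitz while $|v'-v|,|v'_*-v_*|\le 2r$, the collisional difference of $\psi_R$ is $\le 4r$ in modulus. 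Testing Theorem~\ref{local} with $\psi_R$ and using $\int_{\mathbb S^2}b_c\,\rd\sigma=1$ and $\Phi_c$ bounded gives $\int\psi_R\,\rd F_t\le\int\psi_R\,\rd F_0+Ct$, whence by monotone convergence $\int\langle v\rangle\,\rd F_t=\lim_R\int\psi_R\,\rd F_t\le\int\langle v\rangle\,\rd F_0+Ct<\infty$ for every $t$. Now test with $\psi_R=v_j\chi(\cdot/R)$: when $v,v_*,v',v'_*\in B_R$ the collisional difference equals $v'_j+v'_{*,j}-v_j-v_{*,j}=0$ by $v'+v'_*=v+v_*$; since collisions move velocities by at most $2r$, it is otherwise supported in $\{|v|\ge R-2r\}\cup\{|v_*|\ge R-2r\}$ and dominated there by $C(\langle v\rangle+\langle v_*\rangle)$, which is $\rd F_\tau\!\otimes\!\rd F_\tau$-integrable with bound uniform on $[0,t]$. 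Dominated convergence (in $(v,v_*,\sigma)$ for each $\tau$, then in $\tau$) sends the error term to $0$ as $R\to\infty$, and what remains is $\int v_j\,\rd F_t=\int v_j\,\rd F_0$.

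\emph{Energy.} The inequality is trivial unless $\int|v|^2\,\rd F_0<\infty$, so assume $\alpha_0=2$; then the first-moment bound above is available. An a priori second-moment bound follows the same pattern with $\psi_R:=\rho_R(|\cdot|^2)\in C_b^2$: the collisional difference is no longer bounded, but $\big||v'|^2-|v|^2\big|=|(v'-v)\cdot(v'+v)|\le 2r(2|v|+2r)$ forces it to be dominated by $C(\langle v\rangle+\langle v_*\rangle)$, and testing Theorem~\ref{local} together with the already-known bound $\int\langle v\rangle\,\rd F_\tau\le\int\langle v\rangle\,\rd F_0+C\tau$ gives $\int\psi_R\,\rd F_t\le\int|v|^2\,\rd F_0+C\!\int_0^t\!\big(\int\langle v\rangle\,\rd F_0+C\tau\big)\rd\tau$, uniform in $R$, so $\sup_{t\in[0,T]}\int|v|^2\,\rd F_t<\infty$. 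Finally test with $\psi_R=|v|^2\chi(\cdot/R)=|v|^2-g_R$, $g_R(v):=|v|^2\big(1-\chi(v/R)\big)\ge0$ supported in $\{|v|\ge R\}$: the $|v|^2$-part of the collisional difference is the nonpositive quantity $|v'|^2+|v'_*|^2-|v|^2-|v_*|^2$ of the $\omega$-representation in the introduction, while the $g_R$-part is supported in $\{|v|\ge R-4r\}\cup\{|v_*|\ge R-4r\}$ and bounded by $C(|v|^2+|v_*|^2+1)$, which is now $\rd F_\tau\!\otimes\!\rd F_\tau$-integrable. Dropping the nonpositive term and letting $R\to\infty$ — monotone convergence for $\int\psi_R\,\rd F_t$ and $\int\psi_R\,\rd F_0$, dominated convergence for the $g_R$-error — yields $\int|v|^2\,\rd F_t\le\int|v|^2\,\rd F_0$ (indeed the precise dissipation identity, on keeping the $\omega$-term).

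\emph{Main obstacle.} The delicate point is the circularity between knowing a moment stays finite along the flow and being entitled to pass to the limit $R\to\infty$ in the weak formulation. It is resolved by the two-tier scheme above: a crude but $R$-uniform estimate, using only the displacement bound $|v'-v|\le 2r$, first produces finiteness of the moment; the refined passage to the limit is then carried out with the sign-definite principal term kept exactly ($0$ for $v_j$, nonpositive for $|v|^2$). Two subordinate points deserve care: the second-moment bound genuinely uses the first-moment bound, so the energy statement rests on the momentum step; and the truncation must be taken as $|v|^2\chi(v/R)$ rather than a concave profile of $|v|^2$, precisely so that the leading collisional contribution is exactly $|v'|^2+|v'_*|^2-|v|^2-|v_*|^2$ and the inelastic dissipation sign is preserved.
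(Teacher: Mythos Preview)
Your proof is correct and takes a genuinely different route from the paper. The paper establishes $\int \langle v\rangle^{\alpha_0}\,\rd F_t<\infty$ in one stroke by testing with the regularised weight $W_\delta(v)=\langle v\rangle^{\alpha_0}\langle\delta v\rangle^{-\alpha_0}$, using only the crude sublinearity $W_\delta(v')\lesssim W_\delta(v)+W_\delta(v_*)$ (from $|v'|^2\le |v|^2+|v_*|^2$) together with Gr\"onwall, and then simply invokes the identities \eqref{QEcon}--\eqref{QEdiss} to conclude. You instead exploit a feature that is specific to the doubly cut-off kernel: because $\Phi_c$ is supported in the annulus $\{1/2r\le|v-v_*|\le 2r\}$, collisions displace each velocity by at most $\frac{1+e}{2}|v-v_*|\le 2r$, and this lets you run a two-tier scheme (crude $R$-uniform moment bound from Lipschitz truncations, then exact cancellation/sign on the untruncated test function with a tail error controlled by the already-established moment). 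The paper's argument is shorter, yields the full $\alpha_0$-moment for arbitrary $\alpha_0\in(0,2]$ rather than just $\alpha_0=1,2$, and the $W_\delta$ device is reused later in the moments-propagation corollary; on the other hand, your approach is more self-contained in that it actually carries out the passage from ``$\alpha_0$-moment finite'' to ``momentum conserved / energy dissipated'' at the level of measures, which the paper leaves as an appeal to \eqref{QEcon}--\eqref{QEdiss}. Your displacement-bound observation is a nice shortcut that would not survive to the non-cutoff limit, but it is entirely appropriate here.
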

\begin{proof}
	Similar to the proof of elastic case in \cite{morimoto2016measure}, we introduce the weighted function $ W_{\delta}(v) = \left\langle v \right\rangle^{\alpha_{0}} \left\langle \delta v \right\rangle^{-\alpha_{0}} $ for $ \delta > 0 $, and since $ \alpha_{0}\in \left(0,2\right] $ and $ |v'|^{2} \leq |v|^{2} + |v_{*}|^{2} $ still holds in the inelastic collision process, if follows that
	\begin{equation}\label{W1}
	\begin{split}
	 W_{\delta}(v') = \left( \frac{1+|v'|^{2}}{1 + \delta^{2}|v'|^{2}} \right)^{\frac{\alpha_{0}}{2}} \leq& \left( \frac{ 1 + |v|^{2} + |v_{*}|^{2}}{ 1 + \delta^{2}\left(|v|^{2} +|v_{*}|^{2}\right)} \right)^{\frac{\alpha_{0}}{2}}\\
	 \leq& \left( \frac{1+ |v|^{2} }{1 + \delta^{2}|v|^{2} } +\frac{1+|v_{*}|^{2}}{1 + \delta^{2}|v_{*}|^{2}} \right)^{\frac{\alpha_{0}}{2}}\\
	 \lesssim & W_{\delta}(v) + W_{\delta}(v_{*})
	\end{split}
	\end{equation}
	Then, by the derivation of weak formulation, we can obtain that
	\begin{equation}
	\begin{split}
	&\frac{\rd}{\rd t} \int_{\bR^{3}} W_{\delta}(v) \,\rd F_{t}(v)\\
	= & \int_{\bR^{3}} \int_{\bR^{3}} \int_{\Sd^{2}} b_{c} \left(\frac{v-v_{*}}{|v-v_{*}|} \cdot \sigma\right) \Phi_{c}(|v-v_{*}|) W_{\delta}(v) \,\rd \sigma  \left[ \,\rd F_{t}(v') \,\rd F_{t}(v'_{*}) -\,\rd \sigma \,\rd F_{t}(v) \,\rd F_{t}(v_{*}) \right]\\
	\leq & \int_{\bR^{3}} \int_{\bR^{3}} \int_{\Sd^{2}} b_{c} \left(\frac{v-v_{*}}{|v-v_{*}|} \cdot \sigma\right) \Phi_{c}(|v-v_{*}|) W_{\delta}(v') \,\rd \sigma \,\rd F_{t}(v) \,\rd F_{t}(v_{*})\\
	\lesssim & \int_{\bR^{3}} \int_{\bR^{3}} \int_{\Sd^{2}} b_{c} \left(\frac{v-v_{*}}{|v-v_{*}|} \cdot \sigma\right) \Phi_{c}(|v-v_{*}|) \left[ W_{\delta}(v) + W_{\delta}(v_{*}) \right] \,\rd \sigma \,\rd F_{t}(v) \,\rd F_{t}(v_{*})\\
	\lesssim & \int_{\bR^{3}} W_{\delta}(v)\,\rd F_{t}(v)
	\end{split}
	\end{equation}
	where the estimate \eqref{W1} is utilized in the second inequality above.\\
	By applying the Gronwall's inequality, it further implies that
	\begin{equation}
	\int_{\bR^{3}} W_{\delta}(v)\,\rd F_{t}(v) < C'_{e}.
	\end{equation}
	Then, by letting $ \delta \rightarrow 0 $, we obtain $ \int_{\bR^{3}} \left\langle v \right\rangle^{\alpha_{0}} \,\rd F_{t}(v) < \infty $. Finally, by observing the property \eqref{QEcon}, the \eqref{momentumcon} holds since $ v_{j}, j=1,2,3 $ are collisional invariants in the inelastic process, while \eqref{energydiss} holds by using the energy dissipation estimate \eqref{QEdiss} of the inelastic Boltzmann operator.
\end{proof}

\section{Existence without Cutoff Assumption}\label{sec:noncutoff}
In this section, we are prepared to consider the Cauchy problem of inelastic Boltzmann equation \eqref{IB}-\eqref{F0} without the cutoff assumption, which implies that
\begin{equation}\label{noncutoff}
\int_{\Sd^{2}} b\left(\frac{v-v_{*}}{|v-v_{*}|} \cdot \sigma\right) \,\rd \sigma = \infty,
\end{equation}
more precisely, $ b $ satisfies the singularity condition \eqref{noncutoffb}, and kinetic collision kernel $ \Phi(|v-v_{*}|) = |v-v_{*}|^{\gamma} $ with $ -2 \leq \gamma <0 $.

In fact, our strategy is to construct the solutions to \eqref{IB}-\eqref{F0} with non-cutoff collision kernel based on compactness argument, hence, we first consider the increasing sequence of bounded collision kernels, that is, for each $  n \geq 2 $
\begin{equation}
b_{n}\left(\frac{v-v_{*}}{|v-v_{*}|} \cdot \sigma\right) := \min \left\lbrace b\left(\frac{v-v_{*}}{|v-v_{*}|} \cdot \sigma\right), n \right\rbrace \leq b\left(\frac{v-v_{*}}{|v-v_{*}|} \cdot \sigma\right), \quad n\in\mathbb{N},
\end{equation}
where $ \Phi_{n}(|v-v_{*}|) = \Phi(|v-v_{*}|) \phi_{n}(|v-v_{*}|) $ for the moderately soft potential case $ -2 \leq \gamma <0 $ .

\subsection{Finite Energy Initial Datum}
In this subsection, we will discuss the finite energy initial datum case, \textit{i.e.}, $ F_{0} \in P_{2}(\mathbb{R}^{3}) $. By Theorem \ref{local}, for any initial datum $ F_{0} \in P_{2}(\mathbb{R}^{3}) $, there exists a unique measure-valued solution $ F_{t}^{n} $ to the following cutoff equation,
\begin{equation}\label{eqpcuto}
\partial_{t} f(t, v) = \int_{\bR^{3}} \int_{\Sd^{2}} b_{n}\left(\frac{v-v_{*}}{|v-v_{*}|} \cdot \sigma\right) \Phi_{n}(|v-v_{*}|)  \left[ J f(\vs')f(v')- f(\vs)f(v) \right] \,\rd \sigma \,\rd \vs,
\end{equation}
hence, note that the corresponded characteristic function $ \p^{n}(t,\xi) = \mathcal{F}(F_{t}^{n}) \in C\left([0,\infty), \mathcal{K}^{\alpha} \right) $ satisfies the following equation,
\begin{equation}\label{eqpcut}
\partial_{t} \varphi(t,\xi) = \int_{\Sd^{2}} b_{n}\left(\frac{\xi\cdot\sigma}{|\xi|}\right) \int_{\bR^{3}} \hat{\Phi}_{n}(\zeta) \left[ \varphi(t,\xie^{+}-\zeta)\varphi(t,\xie^{-}+\zeta) - \varphi(t,\zeta)\varphi(t,\xi-\zeta) \right]\,\rd\zeta \,\rd\sigma
\end{equation}
with the initial datum $ \p_{0}(\xi) = \mathcal{F}(F_{0}) $.

\subsubsection{Preliminary}

\begin{proposition}\label{finitePro}
	For $  e \in (0,1] $ and $ -2 \leq \gamma <0 $. let the angular collision kernel $ b $ satisfy the non-cutoff assumption \eqref{noncutoffb}, then the sequence of solutions $ \left\lbrace \p^{n}(t,\xi) \right\rbrace_{n=1}^{\infty}  $ to \eqref{eqpcut} are bounded in $ C\left( [0,\infty) \times \mathbb{R}^{3} \right) $ and equi-continuous on any compact subsets of $ [0,\infty) \times \mathbb{R}^{3} $.
\end{proposition}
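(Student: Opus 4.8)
The plan is to prove the two assertions separately. Uniform boundedness is immediate: each $\p^{n}(t,\cdot)$ is a characteristic function, hence $|\p^{n}(t,\xi)|\le 1$ for every $(t,\xi)\in[0,\infty)\times\bR^{3}$. The substance is the equicontinuity, which I would obtain by establishing a Lipschitz bound in $\xi$ that is uniform in $n$ and $t$, together with a Lipschitz bound in $t$ that is uniform in $n$ and locally uniform in $\xi$; summing the two gives equicontinuity on every compact subset of $[0,\infty)\times\bR^{3}$.

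\emph{Equicontinuity in $\xi$.} Since $F_{0}\in P_{2}(\bR^{3})$ and $F_{t}^{n}$ solves the cutoff problem \eqref{eqpcuto}, the physical energy dissipation \eqref{QEdiss}, already recorded in Corollary \ref{DissCutoff}, yields $\int_{\bR^{3}}|v|^{2}\,\rd F_{t}^{n}(v)\le\int_{\bR^{3}}|v|^{2}\,\rd F_{0}(v)=:E_{0}$ \emph{uniformly} in $n$ and $t\ge 0$. Combining $1-\mathrm{Re}\,\p^{n}(t,\zeta)=\int_{\bR^{3}}(1-\cos(v\cdot\zeta))\,\rd F_{t}^{n}(v)\le\tfrac12|\zeta|^{2}E_{0}$ with inequality \eqref{l1} of Lemma \ref{L1} gives $|\p^{n}(t,\xi)-\p^{n}(t,\eta)|^{2}\le 2\big(1-\mathrm{Re}\,\p^{n}(t,\xi-\eta)\big)\le E_{0}|\xi-\eta|^{2}$, a Lipschitz bound uniform in $n$ and $t$.

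\emph{Equicontinuity in $t$.} Testing the weak formulation \eqref{weakmeasure} (written for the truncated kernels $b_{n},\Phi_{n}$, and applied to the real and imaginary parts) against $\psi_{\xi}(v)=\e^{-\im v\cdot\xi}\in C_{b}^{2}(\bR^{3})$ gives $\p^{n}(t,\xi)-\p^{n}(s,\xi)=\tfrac12\int_{s}^{t}\!\iint |v-v_{*}|^{\gamma}\phi_{n}(|v-v_{*}|)\Big(\int_{\Sd^{2}}b_{n}(\cos\theta)\big[\psi_{\xi}(v'_{*})+\psi_{\xi}(v')-\psi_{\xi}(v_{*})-\psi_{\xi}(v)\big]\,\rd\sigma\Big)\rd F_{\tau}^{n}\,\rd F_{\tau}^{n}\,\rd\tau$, so the point is an $n$-uniform bound on the inner $\sigma$-integral. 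Writing $v'=v+w$, $v'_{*}=v_{*}-w$ with $w=\tfrac{1+e}{4}|v-v_{*}|(\sigma-k)$ and $k=(v-v_{*})/|v-v_{*}|$ (so $|w|=\tfrac{1+e}{2}|v-v_{*}|\sin(\theta/2)$), a second-order Taylor expansion yields $\psi_{\xi}(v')+\psi_{\xi}(v'_{*})-\psi_{\xi}(v)-\psi_{\xi}(v_{*})=\big(\nabla\psi_{\xi}(v)-\nabla\psi_{\xi}(v_{*})\big)\cdot w+R$ with $|R|\lesssim\langle\xi\rangle^{2}|w|^{2}$. By azimuthal symmetry $\int_{\Sd^{2}}b_{n}(\cos\theta)(\sigma-k)\,\rd\sigma=\big(\int_{\Sd^{2}}b_{n}(\cos\theta)(\cos\theta-1)\,\rd\sigma\big)k$, so after integration in $\sigma$ both the first-order term and the remainder are controlled by $\langle\xi\rangle^{2}|v-v_{*}|^{2}\int_{0}^{\pi/2}\sin^{2}(\theta/2)\,b(\cos\theta)\sin\theta\,\rd\theta$; since $\alpha_{0}\le 2$ and $b_{n}\le b$, the non-cutoff assumption \eqref{noncutoffb} makes this quantity finite and independent of $n$. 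Hence the inner $\sigma$-integral is $\lesssim\langle\xi\rangle^{2}|v-v_{*}|^{\gamma+2}\phi_{n}(|v-v_{*}|)\le\langle\xi\rangle^{2}|v-v_{*}|^{\gamma+2}$, and because $0\le\gamma+2\le 2$ one has $|v-v_{*}|^{\gamma+2}\le 1+2|v|^{2}+2|v_{*}|^{2}$; the uniform energy bound then gives $\iint(\cdots)\,\rd F_{\tau}^{n}\,\rd F_{\tau}^{n}\lesssim\langle\xi\rangle^{2}(1+E_{0})$, whence $|\p^{n}(t,\xi)-\p^{n}(s,\xi)|\lesssim\langle\xi\rangle^{2}|t-s|$ uniformly in $n$.

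Adding the two bounds, on any compact $K\subset[0,\infty)\times\bR^{3}$ one obtains $|\p^{n}(t,\xi)-\p^{n}(s,\eta)|\le C_{K}(|t-s|+|\xi-\eta|)$ uniformly in $n$, which is the asserted equicontinuity. The main obstacle is precisely the soft-potential singularity: the weight $|v-v_{*}|^{\gamma}$ blows up at $v=v_{*}$, and the cutoff $\phi_{n}$ does not help uniformly — indeed $\widehat{\Phi}_{n}$ carries only an $n$-dependent constant in Lemma \ref{PhiDecay}, so one cannot estimate the Fourier form \eqref{eqpcut} term by term. The cure is to keep the full four-term collision difference and integrate in $\sigma$ \emph{before} taking absolute values: the cancellation forced by the symmetry of $b_{n}$ supplies a compensating factor $|v-v_{*}|^{2}$, so the effective weight $|v-v_{*}|^{\gamma+2}$ is integrable against $\rd F_{\tau}^{n}\,\rd F_{\tau}^{n}$ — in fact dominated by the uniformly bounded second moment — exactly because $\gamma\ge-2$. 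This is the step where the restriction to moderately soft potentials is used, and where the argument would break for $\gamma<-2$.
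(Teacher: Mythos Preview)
Your proof is correct and follows essentially the same route as the paper: the uniform bound comes from $|\p^{n}|\le 1$, the $\xi$-equicontinuity from inequality \eqref{l1} of Lemma~\ref{L1} combined with the uniform second-moment control of Corollary~\ref{DissCutoff}, and the $t$-equicontinuity from the weak formulation, a second-order Taylor expansion, the azimuthal cancellation on $\Sd^{2}$, and the key observation that the resulting weight $|v-v_{*}|^{\gamma+2}$ is controlled by the energy when $\gamma\ge -2$. The only cosmetic differences are that the paper routes the $\xi$-bound through $\|\p^{n}-1\|_{\alpha}$ (yielding a H\"older modulus $|\xi-\eta|^{\alpha/2}$) whereas you go directly via $1-\cos(v\cdot\zeta)\le\tfrac12|\zeta|^{2}|v|^{2}$ to get a Lipschitz bound, and the paper splits $\sigma-\hat q$ into its perpendicular and parallel parts before invoking symmetry while you integrate $\sigma-k$ in one stroke; neither changes the substance.
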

\begin{proof}
\textit{Step 1: Uniform Bound.} According to Theorem \ref{local}, the sequence of solution $ \p^{n}\left(\cdot,\xi\right) \in \K^{\alpha} $ under cutoff assumption are all characteristic function for every $ t\geq 0 $, hence, we have 
\begin{equation}
\left| \p^{n}\left(  t,\xi \right) \right| \leq  1
\end{equation}
for all $ \xi\in\mathbb{R}^{3} $ and $ t\geq 0 $, which implies the uniform bound of $ \p^{n}\left(  t,\xi \right) $.\\
\textit{Step 2: Equi-continuity in the Fourier variable $ \xi $.} To prove the equi-continuity with respect to variable $ \xi $ in a compact set of $ \mathbb{R}^{3} $, it suffices to apply Lemma \ref{L1}, combined with \cite[Lemma 3.3]{Qi20} to obtain the following estimate: for all $ t\geq 0 $,
\begin{equation}
\begin{split}
\left| \p^{n} (t,\xi) - \p^{n}(t,\eta) \right| \leq & \sqrt{2\left[ 1- \text{Re} \p^{n}\left( t,\xi-\eta\right)\right]}\\
\leq & \sqrt{2}\left| \xi-\eta \right|^{\frac{\alpha}{2}} \left\| \p^{n}(t) - 1 \right\|_{\alpha}^{\frac{1}{2}}\\
\leq & \sqrt{2} C_{\alpha}\left| \xi-\eta \right|^{\frac{\alpha}{2}} \sup_{t \in [0,\infty)} \int_{\bR^{3}} |v|^{\alpha} \,\rd F_{t}^{n}\\
\leq & \sqrt{2} C_{\alpha}\left| \xi-\eta \right|^{\frac{\alpha}{2}} \sup_{t \in [0,\infty)} \int_{\bR^{3}} \left( 1 + |v|^{2} \right) \,\rd F_{0}.
\end{split}
\end{equation}
where the energy dissipation Corollary \ref{DissCutoff} is applied for $ F_{t}^{n} $ in the last inequality above.\\
\textit{Step 3: Equi-continuity in the time variable $ t $.} 
To achieve this, by recalling the Definition \ref{measure} of measure-valued solution and letting $ \psi \in C_{b}^{2}(\mathbb{R}^{3}) $ be the test function, it suffices to find that
\begin{equation}\label{equit1}
\begin{split}
& \left|  \int_{\bR^{3}} \psi(v) \,\rd F_{t}^{n}(v) - \int_{\bR^{3}} \psi(v) \,\rd F_{s}^{n}(v) \right|\\
\leq & \frac{1}{2} \int_{s}^{t} \left| \int_{\bR^{3}} \int_{\bR^{3}} \int_{\Sd^{2}} b_{n}\left(\psi'_{*} + \psi' - \psi_{*} - \psi \right)\,\rd \sigma \left|v-v_{*}\right|^{\gamma} \psi_{n}(|v-v_{*}|) \,\rd F_{\tau}^{n}(v) \,\rd F_{\tau}^{n}(v_{*}) \right| \,\rd \tau
\end{split}
\end{equation}
where the shorthand notations $ \psi'_{*} = \psi(v'_{*}), \psi' = \psi(v'), \psi_{*} = \psi(v_{*}), \psi = \psi(v) $ are introduced  for simplicity. Further considering the pre-post collision velocities relation \eqref{ve} that
\begin{equation}
\begin{split}
v'-v = & \frac{v_{*}- v}{2} + \frac{1-e}{4}(v-v_{*}) + \frac{1+e}{4} |v-v_{*}|\sigma\\
= & \frac{1+e}{2} \left[ \frac{1}{2} |v-v_{*}|\sigma - \frac{1}{2}(v-v_{*})   \right]\\
= & a_{+} \frac{|v-v_{*}|}{2} \left[ \sigma - \left(\sigma\cdot\frac{v-v_{*}}{|v-v_{*}|} \right)\frac{v-v_{*}}{|v-v_{*}|} \right]  + a_{+} \frac{v-v_{*}}{2} \left[ \left(\sigma\cdot\frac{v-v_{*}}{|v-v_{*}|}\right) - 1  \right]
\end{split}
\end{equation}
and letting $ \hat{q} = \frac{v-v_{*}}{|v-v_{*}|} $, then the Taylor expansion up to the second order gives that
\begin{equation}
\begin{split}
\psi' - \psi = & \nabla \psi(v)(v'-v) + \int_{0}^{1} (1-\tau) \nabla^{2}\psi \left[v+\tau(v'-v)\right] \,\rd \tau (v'-v)^{2}\\
= & a_{+} \frac{|v-v_{*}|}{2} \nabla\psi(v) \left[\sigma - (\sigma\cdot\hat{q})\hat{q}\right] + a_{+} \nabla\psi(v) \frac{v-v_{*}}{2} \left[(\sigma\cdot\hat{q}) - 1\right] + O\left(|v-v_{*}|^{2} \theta^{2}\right),
\end{split}
\end{equation}
and the similar expansion can be obtained for $ \psi'_{*} - \psi_{*} $ as well, which further implies that
\begin{equation}\label{fourpsi}
\psi'_{*} + \psi' - \psi_{*} - \psi + a_{+} \frac{|v-v_{*}|}{2} \left[\nabla\psi(v) - \nabla\psi(v_{*})\right] \left[(\sigma\cdot\hat{q})\hat{q} - \sigma \right] = O\left(|v-v_{*}|^{2} \theta^{2}\right).
\end{equation}
On the other hand, noticing that the symmetry on the sphere integral that
\begin{equation}\label{sym1}
\begin{split}
&\int_{\Sd^{2}} b_{n}(\sigma\cdot\hat{q}) \left[(\sigma\cdot\hat{q})\hat{q} - \sigma \right] \,\rd\sigma\\
 = & \int_{0}^{2\pi} \int_{0}^{\pi} b_{n}(\cos\theta) \left(\hat{h} \sin\theta\cos\phi + \hat{j} \sin\theta\sin\phi \right) \sin\theta \,\rd\theta \,\rd \phi   = 0
\end{split}
\end{equation}
which can be calculated by decomposing the $ \sigma \in \Sd^{2} $ as
\begin{equation}
\sigma = \hat{q} \cos\theta + \hat{h} \sin\theta\cos\phi + \hat{j} \sin\theta\sin\phi 
\end{equation}
by the following orthogonal basis constructed by $ \hat{q} $ and
\begin{equation}
\hat{j} := \frac{v \times v_{*}}{|v \times v_{*}|}, \qquad \hat{h}:= \hat{j} \times \hat{q} = \frac{( (v-v_{*})\cdot v )v_{*} - ( (v-v_{*})\cdot v_{*} )v}{|v-v_{*}| | v\times v_{*}|}.
\end{equation}
Combing \eqref{fourpsi} and \eqref{sym1}, together with the non-cutoff assumption \eqref{noncutoffb}, it follows that
\begin{equation}\label{OO1}
\left| \int_{\Sd^{2}} b_{n}\left(\psi'_{*} + \psi' - \psi_{*} - \psi \right) \,\rd \sigma \right| \leq C_{e,\psi} |v-v_{*}|^{2}
\end{equation}
As a result, after substituting estimation \eqref{OO1} into \eqref{equit1}, we finally obtain that, for any $ 0 \leq s < t \leq T $, there exists a constant $ C_{e,\psi} > 0 $ (independent of $ n $) such that
\begin{equation}
\begin{split}\label{equitsoft}
\left|  \int_{\bR^{3}} \psi \,\rd F_{t}^{n}(v) - \int_{\bR^{3}} \psi \,\rd F_{s}^{n}(v) \right|
\leq & \left|t-s\right| \frac{C_{e, \psi}}{2} \sup_{s\leq\tau\leq t} \int_{\bR^{3}} \int_{\bR^{3}} \left|v-v_{*}\right|^{\gamma+2} \,\rd F_{\tau}^{n}(v) \,\rd F_{\tau}^{n}(v_{*})\\
\leq & \left|t-s\right| \frac{C_{e, \psi}}{2} \int_{\bR^{3}} \left( 1 + |v|^{2} \right) \,\rd F_{0}.
\end{split}
\end{equation}
since $ -2 \leq \gamma <0 $ in the moderately soft potential case. Since we merely consider the compact subset of $ [0,\infty) \times \mathbb{R}^{3} $, then for any fixed bounded $ \xi\in\mathbb{R}^{3} $, the function $ \e^{\im \xi\cdot v} \in C_{b}^{2}(\mathbb{R}^{3}) $ is sufficient to be the test function. And this completes the proof by selecting $ \psi(v) = \e^{\im \xi\cdot v} $ in \eqref{equitsoft}.
\end{proof}

By the Proposition \ref{finitePro} above, we are able to apply the Ascoli-Arzela Theorem and the Cantor diagonal rule to admit the subsequence of solutions $ \{\p^{n_{k}} (t,\xi) \}^{\infty}_{k=1} \subset \{\p^{n} (t,\xi) \}^{\infty}_{n=1} $, which converges uniformly on every compact subset of $ \mathbb{R} \times [0,\infty) $. Then, by letting
\begin{equation}
\p(t,\xi) = \lim\limits_{n\rightarrow \infty} \p^{n}(t,\xi),
\end{equation}
and $ \p(t,\xi) $ is the point-wise limit of characteristic functions, which are the solution to \eqref{eqpcut}, for every $ t > 0 $. Furthermore, denote $ F^{n}_{t} := \mathcal{F}^{-1}\left( \p^{n}(t,\xi) \right) $ and $ F_{t} := \mathcal{F}^{-1}\left( \p(t,\xi) \right) $, then we have
\begin{equation}\label{weakconF}
F^{n}_{t} \rightharpoonup F_{t}, \quad \forall \psi\in C_{b}^{2}(\mathbb{R}^{3}).
\end{equation}
Thus, the rest is to prove $ F_{t} $ is a measure-valued solution to \eqref{IB}. 

Before that, the following lemma \ref{DissiLemma} is introduced to illustrate that some properties of the measure-valued solution $ F^{n}_{t} $, especially for moments, will be maintained in the limiting process.  
\begin{lemma}\label{DissiLemma}
	Let $ F^{n}_{t} = \mathcal{F}^{-1}\left( \p^{n}(t,\xi) \right) $ and $ F_{t} = \mathcal{F}^{-1}\left( \p(t,\xi) \right) $ as defined in \eqref{weakconF} above. Then\\
	(i) For every $ t > 0 $, we have
	\begin{equation}\label{dissFt}
	\int_{\bR^{3}} |v|^{2} \,\rd F_{t}(v) \leq \int_{\bR^{3}} |v|^{2} \,\rd F_{0}(v).
	\end{equation}
	(ii) For any $  T >0 $ and $ \psi (v) \in C(\mathbb{R}^{3}) $ with $ |\psi(v) | \leq \left\langle v \right\rangle^{l}, 0 < l < 2 $, we have,
	\begin{equation}\label{limFt1}
	\lim\limits_{n\rightarrow \infty} \int_{\bR^{3}} \psi(v) \,\rd F_{t}^{n}(v) = \int_{\bR^{3}} \psi(v) \,\rd F_{t}(v)
	\end{equation} 
       uniformly for $ t\in [0,T] $.\\
       (iii) For any $  T >0 $ and $ \Psi (v,\vs) \in C(\mathbb{R}^{3}\times \mathbb{R}^{3}) $ with $ |\Psi(v,\vs) | \leq \left\langle v \right\rangle^{l} \left\langle \vs \right\rangle^{l}, 0 < l < 2 $, we have,
       \begin{equation}\label{limFt2}
       \lim\limits_{n\rightarrow \infty} \int_{\bR^{3}} \int_{\bR^{3}} \Psi (v,\vs) \,\rd F_{t}^{n}(v)\,\rd F_{t}^{n}(\vs) = \int_{\bR^{3}} \int_{\bR^{3}} \Psi (v,\vs) \,\rd F_{t}(v) \,\rd F_{t}(\vs)
       \end{equation} 
       uniformly for $ t\in [0,T] $.
\end{lemma}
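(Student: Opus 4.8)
The plan is to prove the three assertions in sequence, leaning on two facts already in hand. First, since $\varphi^{n}\to\varphi$ uniformly on compact subsets of $[0,\infty)\times\bR^{3}$, the limit $\varphi(t,\cdot)$ is continuous with $\varphi(t,0)=1$, so by L\'evy's continuity theorem each $F_{t}$ is a probability measure and $F^{n}_{t}\rightharpoonup F_{t}$ narrowly for every fixed $t$, i.e. $\int g\,\rd F^{n}_{t}\to\int g\,\rd F_{t}$ for every bounded continuous $g$. Second, Corollary~\ref{DissCutoff} applied with $\alpha_{0}=2$ to each cutoff solution $F^{n}_{t}$ (whose initial datum is $F_{0}\in P_{2}(\bR^{3})$) gives the uniform bound $M:=\sup_{n}\sup_{t\ge0}\int_{\bR^{3}}|v|^{2}\,\rd F^{n}_{t}(v)\le\int_{\bR^{3}}|v|^{2}\,\rd F_{0}(v)<\infty$. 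Note that the equicontinuity in $t$ of $\{\varphi^{n}\}$ from Proposition~\ref{finitePro} also makes the pointwise limit $t\mapsto\varphi(t,\xi)$ continuous, hence $t\mapsto F_{t}$ narrowly continuous.

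For (i) I would simply invoke lower semicontinuity of $v\mapsto|v|^{2}$ under narrow convergence (the portmanteau theorem): for each $t$, $\int|v|^{2}\,\rd F_{t}\le\liminf_{n}\int|v|^{2}\,\rd F^{n}_{t}\le M$, which is \eqref{dissFt}. I would record here the by-product $\int|v|^{2}\,\rd F_{t}\le M$ for every $t$, needed below.

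For (ii) and (iii) I would use the same two-step scheme. \emph{Step 1 (tails).} Since $0<l<2$ and $\langle v\rangle^{l}\le 2^{l/2}R^{l-2}|v|^{2}$ on $\{|v|\ge R\}$ for $R\ge1$, the tail $\int_{|v|\ge R}|\psi|\,\rd F^{n}_{t}$ is $\le 2^{l/2}R^{l-2}M$ uniformly in $n,t$, and the same bound holds with $F_{t}$ in place of $F^{n}_{t}$ by the by-product of (i); for (iii) the integral of $|\Psi|$ over $\{|v|>R\}\cup\{|\vs|>R\}$ is controlled by $2\big(\int_{|v|>R}\langle v\rangle^{l}\,\rd F^{n}_{t}\big)\big(\int\langle\vs\rangle^{l}\,\rd F^{n}_{t}\big)\lesssim R^{l-2}$ after also bounding $\int\langle v\rangle^{l}\,\rd F^{n}_{t}\le 1+M^{l/2}$ by H\"older's inequality (again uniformly, with the same for $F_{t}$). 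Thus it suffices to prove the stated limits with $\psi$ replaced by $\psi\chi_{R}$ and $\Psi$ by $\Psi(v,\vs)\chi_{R}(v)\chi_{R}(\vs)$, where $\chi_{R}\in C^{\infty}_{c}(\bR^{3})$ and $\mathbf{1}_{\{|v|\le R\}}\le\chi_{R}\le\mathbf{1}_{\{|v|\le 2R\}}$; these truncated integrands are bounded and continuous. \emph{Step 2 (uniformity in $t$).} For a bounded continuous $g$ (or $G$ on the product) I would show $\int g\,\rd F^{n}_{t}\to\int g\,\rd F_{t}$ uniformly on $[0,T]$ by a subsequence/contradiction argument: if it failed, one would get $\varepsilon>0$, $n_{k}\uparrow\infty$ and $t_{k}\to t_{*}$ in $[0,T]$ with $|\int g\,\rd F^{n_{k}}_{t_{k}}-\int g\,\rd F_{t_{k}}|\ge\varepsilon$; but the uniform-on-compacts convergence $\varphi^{n}\to\varphi$ together with the continuity of $t\mapsto\varphi(t,\xi)$ forces $\varphi^{n_{k}}(t_{k},\xi)\to\varphi(t_{*},\xi)$ for every $\xi$, hence $F^{n_{k}}_{t_{k}}\rightharpoonup F_{t_{*}}$ by L\'evy, and likewise $F_{t_{k}}\rightharpoonup F_{t_{*}}$ by narrow continuity of $t\mapsto F_{t}$; testing both against the bounded continuous $g$ gives $\int g\,\rd F^{n_{k}}_{t_{k}},\,\int g\,\rd F_{t_{k}}\to\int g\,\rd F_{t_{*}}$, contradicting the gap $\varepsilon$. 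For (iii) the same works once one notes $F^{n_{k}}_{t_{k}}\otimes F^{n_{k}}_{t_{k}}\rightharpoonup F_{t_{*}}\otimes F_{t_{*}}$, seen at once from the factorisation of the joint characteristic function and L\'evy's theorem in $\bR^{6}$. Combining the two steps gives \eqref{limFt1} and \eqref{limFt2}.

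I expect the main obstacle to be precisely the uniformity in $t$ of Step 2: the natural estimates deliver convergence only for each fixed $t$, and promoting this to uniform convergence on $[0,T]$ is where the equicontinuity in time of the characteristic functions (Proposition~\ref{finitePro}) must be fed into the compactness-and-contradiction mechanism above. By comparison, part (i) is just lower semicontinuity under weak convergence, and Step 1 is the standard uniform-integrability estimate exploiting the uniform second-moment bound $M$ and the strict inequality $l<2$.
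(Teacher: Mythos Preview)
Your proposal is correct, but the route differs from the paper's in each of the three parts. For (i) you invoke the portmanteau theorem (lower semicontinuity of $\int |v|^{2}$ under narrow convergence), whereas the paper works on the Fourier side, passing from $\sup_{\delta<|\xi|<\delta^{-1}}|\varphi^{n}-1|/|\xi|^{2}$ to $\|\varphi-1\|_{2}$ and using $\mathcal{K}^{2}=\mathcal{F}(P_{2})$. For (ii), after the same tail cut, the paper does not argue by contradiction via L\'evy but rather writes, for a Schwartz approximation $\psi_{\varepsilon}$ of the truncated test function, $\int\psi_{\varepsilon}\,\rd F^{n}_{t}-\int\psi_{\varepsilon}\,\rd F_{t}=(2\pi)^{3}\int\overline{\widehat{\psi_{\varepsilon}}}(\varphi^{n}-\varphi)$ and splits $\{|\xi|\ge M\}\cup\{|\xi|<M\}$; the uniformity in $t$ then drops out of the uniform-on-compacts convergence of $\varphi^{n}\to\varphi$ directly, with no subsequence argument. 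For (iii) the paper avoids L\'evy on $\bR^{6}$ and instead uses the Weierstrass approximation theorem to replace $\Psi$ on the compact box $K_{R_{\varepsilon}}$ by a finite sum $\sum_{j}p_{j}(v)\tilde p_{j}(v_{*})$, reducing to part (ii) factor by factor. Your probabilistic route is tidier and requires less bookkeeping; the paper's Fourier/Parseval route is more explicit (one sees the dependence on $M$, $R$, $\widehat{\psi_{\varepsilon}}$) and stays within the characteristic-function machinery used elsewhere in the argument.
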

\begin{proof}
	The proof in inelastic case is almost same as the counterpart \cite[Lemma 3.2]{morimoto2016measure} of the elastic case, since the estimates are only associated with the variables $ v $ and $ v_{*} $ in the limiting process, which have no distinction between the elastic and inelastic cases. See Appendix \ref{appen2} for the complete proof.
\end{proof}

\subsubsection{Proof of Theorem \ref{main1} (i)}

In this subsection, we are prepared to complete the proof of the main Theorem \ref{main1} (i) in the case of finite energy initial datum, based the preliminary results above.
\begin{proof}
\textit{Step 1:} Noticing that $ \nabla (\psi(v) - \psi(v_{*})) = O(|v-v_{*}|) $ for $ \psi \in C_{b}^{2}(\mathbb{R}^{3}) $, as well as the symmetric formula \eqref{sym1} on the spherical integration that,
\begin{equation}\label{sym2}
\int_{\Sd^{2}} b(\sigma \cdot \hat{q}) \left[  (\sigma\cdot\hat{q})\hat{q} -\sigma  \right] \,\rd \sigma = \lim\limits_{n\rightarrow \infty} \int_{\Sd^{2}} b_{n}(\sigma \cdot \hat{q}) \left[  (\sigma\cdot\hat{q})\hat{q} -\sigma  \right] \,\rd \sigma = 0
\end{equation}
and if we denote 
\begin{equation}\label{triangle}
\triangle \psi^{e}: = \psi(v'_{*}) + \psi(v') - \psi(v_{*}) - \psi(v) + a_{+}\frac{|v-v_{*}|}{2} \left[ \nabla\psi(v) - \nabla\psi(v_{*}) \right] \left[  (\sigma\cdot\hat{q})\hat{q} -\sigma  \right]
\end{equation}
it then follows from \eqref{fourpsi} that
\begin{equation}\label{trianglees}
\left| \triangle \psi^{e} \right|= O\left(|v-v_{*}|^{2} \theta^{2} \right),\ \text{or more precisely}, \    \left| \triangle \psi^{e} \right| \lesssim \sup_{j,k} \| \partial_{j} \partial_{k} \psi \|_{L^{\infty}} \left| v-v_{*} \right|^{2} \theta^{2}.
\end{equation}
Taking the symmetric property \eqref{sym1} and \eqref{sym2} on the spherical integration once again, we have
\begin{equation}\label{symmweak}
\begin{split}
\int_{\bR^{3}} \int_{\bR^{3}} \int_{\Sd^{2}} & b_{n}(\sigma \cdot \hat{q}) \left| v-v_{*}\right|^{\gamma} \phi_{n}(|v-v_{*}|) \left[ \psi(v'_{*}) + \psi(v') - \psi(v_{*}) - \psi(v)  \right] \,\rd \sigma \,\rd F^{n}_{t} (v) \,\rd F^{n}_{t} (v_{*}) \\
& - \int_{\bR^{3}} \int_{\bR^{3}} \int_{\Sd^{2}} b(\sigma \cdot \hat{q}) \left| v-v_{*}\right|^{\gamma}  \left[ \psi(v'_{*}) + \psi(v') - \psi(v_{*}) - \psi(v)  \right] \,\rd \sigma \,\rd F_{t} (v) \,\rd F_{t} (v_{*}) \\
= & \int_{\bR^{3}} \int_{\bR^{3}} \int_{\Sd^{2}} b_{n}(\sigma \cdot \hat{q}) \left| v-v_{*}\right|^{\gamma} \phi_{n}(|v-v_{*}|) \triangle \psi^{e} \,\rd \sigma \,\rd F^{n}_{t} (v) \,\rd F^{n}_{t} (v_{*}) \\
& - \int_{\bR^{3}} \int_{\bR^{3}} \int_{\Sd^{2}} b(\sigma \cdot \hat{q})  \left| v-v_{*}\right|^{\gamma}  \triangle \psi^{e} \,\rd \sigma \,\rd F_{t} (v) \,\rd F_{t} (v_{*}) \\
=& \underbrace{ \int_{\bR^{3}} \int_{\bR^{3}} \left[ \Psi^{e}_{n}(v,v_{*}) - \Psi^{e}(v,v_{*}) \right] \,\rd F^{n}_{t}(v) \,\rd F^{n}_{t}(v_{*}) }_{(I)} \\
& + \underbrace{ \left(  \int_{\bR^{3}} \int_{\bR^{3}} \Psi^{e}(v,v_{*})  \,\rd F^{n}_{t}(v) \,\rd F^{n}_{t}(v_{*}) - \int_{\bR^{3}} \int_{\bR^{3}} \Psi^{e}(v,v_{*})  \,\rd F_{t}(v) \,\rd F_{t}(v_{*}) \right) }_{(II)}
\end{split}
\end{equation}
where we denote $ B_{n} = b_{n}(\sigma \cdot \hat{q}) |v-v_{*}|^{\gamma} \phi_{n}(|v-v_{*}|)  $ and further $ \Psi^{e}_{n}(v,v_{*}), \Psi^{e}(v,v_{*}) $ as following, 
\begin{equation}
\Psi^{e}_{n}(v,v_{*}) = |v-v_{*}|^{\gamma} \phi_{n}(|v-v_{*}|) \int_{\Sd^{2}} b_{n}(\sigma \cdot \hat{q}) \triangle \psi^{e} \,\rd \sigma = L^{e}_{B_{n}}[\psi] (v,v_{*})
\end{equation}
and 
\begin{equation}
\Psi^{e}(v,v_{*}) = |v-v_{*}|^{\gamma} \int_{\Sd^{2}} b(\sigma \cdot \hat{q}) \triangle \psi^{e} \,\rd \sigma = L^{e}_{B}[\psi] (v,v_{*}).
\end{equation}
By using the elementary inequality $  |v-v_{*}|^{2 + \gamma} \leq \left\langle v \right\rangle^{2 + \gamma} \left\langle v_{*} \right\rangle^{2 + \gamma}  $ with $ -2 \leq \gamma < 0 $, and noting that both $ \Psi^{e}_{n}(v,v_{*}) $ and $ \Psi^{e}(v,v_{*}) $ satisfy the growth condition in Lemma \ref{DissiLemma}(iii) with $ l = 2 + \gamma < 2 $, then, for any $ R > 0 $, we have,
\begin{equation}
\left| \int_{|v|>R} \int_{|v_{*}|> R} \left[ \Psi^{e}_{n}(v,v_{*}) -  \Psi^{e}(v,v_{*})  \right] \,\rd F^{n}_{t}(v) \,\rd F^{n}_{t} (v_{*})  \right| \lesssim R^{l-2}\left( \int_{\bR^{3}} \left\langle v \right\rangle^{2} \,\rd F_{0}(v) \right)^{2}.
\end{equation}
Thus, the first term $ (I) $ of \eqref{symmweak} converges to zero uniformly for $ t \geq 0 $, as $ n\rightarrow \infty $, since the function $ \Psi^{e}_{n}(v,v_{*}) $ converges to $ \Psi^{e}(v,v_{*}) $ uniformly on a compact set of $ (v,v_{*}) \in  \mathbb{R}_{v}^{3} \times \mathbb{R}_{v_*}^{3} $; on the other hand, by Lemma \ref{DissiLemma} (iii), the second term $ (II) $ of \eqref{symmweak} also converges to zero uniformly on a compact set of $ t \in [0,\infty) $, as $ n \rightarrow \infty $. \\
Hence, it concludes that $ F_{t} $ satisfies the definition of measure-valued solution \eqref{weakmeasure}, \textit{i.e.}, the following mapping
\begin{equation}
t \in [0,\infty) \mapsto \int_{\bR^{3}} \int_{\bR^{3}} L^{e}_{B}[\psi](v,v_{*}) \,\rd F_{t}(v) \,\rd F_{t}(v_{*})
\end{equation}
lies in the space $ C[0,\infty)  \cap  L^{1}_{\text{loc}} [0,\infty) $, which is derived from the fact that $ F_{t}^{n} \in C \left(  [0,\infty); P_{2}(\mathbb{R}^{3}) \right) $, \textit{i.e.}, the mapping
\begin{equation}
t \mapsto \int_{\bR^{3}} \int_{\bR^{3}} L^{e}_{B_{n}}[\psi](v,v_{*}) \,\rd F^{n}_{t}(v) \,\rd F^{n}_{t}(v_{*})
\end{equation}
is continuous with respect to $t \in [0,\infty) $, in the sense of the Weierstrass Approximation Theorem as in the proof of \cite[Lemma 3.2 (iii)]{morimoto2016measure}.\\
\textit{Step 2:} What's more, the energy dissipation property \eqref{enerdiss} of the measure-valued solution $ F_{t}(v) $ has been shown in Lemma \ref{DissiLemma} (i); 
then the momentum conservation \eqref{momencon} can be proved by considering another cutoff function $  \psi_{m}(v) =  v_{j} \phi_{c}(|v| / m) \in C_{b}^{2}(\mathbb{R}^{3}), j=1,2,3 $ that, 
\begin{equation}\label{CC1}
\begin{split}
&\int_{\bR^{3}} \psi_{m}(v) \,\rd F_{t}(v) - \int_{\bR^{3}} \psi_{m}(v) \,\rd F_{0}(v) \\
= &\frac{1}{2} \int_{0}^{t} \left[ \int_{\bR^{3}} \int_{\bR^{3}} \int_{\Sd^{2}} b(\sigma \cdot \hat{q}) |v-v_*|^{\gamma} \triangle \psi^{e}_{m}(v) \,\rd F_{\tau}(v) \,\rd F_{\tau}(v_*) \,\rd \sigma   \right] \,\rd\tau < \infty
\end{split}
\end{equation}
where the finiteness with respect to $ m $ comes from the estimate \eqref{triangle}. Hence, for any $ \eta > 0 $, there always exists a $ \delta > 0 $ independent of $ m $ such that
\begin{equation}\label{CC2}
\left| \int \int_{|v-v_*| < \delta} \int_{\theta < \delta} b(\sigma \cdot \hat{q}) |v-v_*|^{\gamma} \triangle \psi^{e}_{m}(v) \,\rd F_{\tau}(v) \,\rd F_{\tau}(v_*) \,\rd \sigma \right| < \eta, 
\end{equation}
as a consequent, combining \eqref{CC1} together with \eqref{CC2}, the Lebesgue Dominated Convergence Theorem yields that
\begin{equation}
\lim\limits_{m\rightarrow \infty} \left| \int_{\bR^{3}} \psi_{m}(v) \,\rd F_{t}(v) - \int_{\bR^{3}} \psi_{m}(v) \,\rd F_{0}(v)  \right| \leq \frac{t\eta}{2},
\end{equation}
in fact, 
with the help of Lemma \ref{DissiLemma} (ii), we can also obtain momentum conservation \eqref{momencon} by the following estimate 
\begin{equation}
\int_{\bR^{3}} v_{j} \,\rd F_{t}(v) = \lim\limits_{n\rightarrow \infty} \int_{\bR^{3}} v_{j} \,\rd F^{n}_{t}(v) = \int_{\bR^{3}} v_{j} \,\rd F_{0}(v), \ j=1,2,3
\end{equation}
where we utilize \eqref{limFt1} in the first equality above, and \eqref{momentumcon} in the second equality.\\
\textit{Step 3:} Finally, it only remains to show that $ F_{t}(v) \in C\left([0,\infty); P_{2}(\mathbb{R}^{3})\right) $. In fact, considering the equi-continuous estimate \eqref{equitsoft} together with the Lemma \ref{DissiLemma}(i), it suffices to prove that $ F_{t}(v) \in C\left([0,\infty); P_{2}(\mathbb{R}^{3})\right) $, since it follows from \cite[Lemma 1]{ToscaniVillani1999} that, for any $ t_{0} \in [0,\infty) $, 
\begin{equation}
\lim\limits_{R \rightarrow \infty} \left( \lim\sup\limits_{t \rightarrow t_{0}} \int_{|v|\geq R} |v|^{2} \,\rd F_{t}(v) \right) = 0.
\end{equation}
This completes the proof of Theorem \ref{main1} (i) in the case of finite energy initial datum.
\end{proof}
Once the measure-valued solution $ F_{t} $ is constructed with finite energy estimate, the moment propagation Corollary \ref{momentspro} would immediately follow by using the another weighted function as the elastic case \cite[Proposition 1.5]{morimoto2016measure}, see Appendix \ref{appPro} for complete proof.

\subsection{Infinite Energy Initial Datum}
In this subsection, we will turn to the possibly infinite energy initial datum case, \textit{i.e.}, $ F_{0} \in P_{\alpha}(\mathbb{R}^{3}) $ with $ \alpha < 2 $.

\subsubsection{Preliminary}

We first give the following Proposition \ref{infinitePro}, which implies more strict moments estimation of the cutoff solution $ F_{t}^{n} $ with infinite energy initial datum $ F_{0} $.

\begin{proposition}\label{infinitePro}
	For any $ e \in (0,1]$, and assume that $ -2 \leq \gamma < 0 $ and $ c_{\gamma,s} < \alpha <2 $. If $ F^{n}_{t} \in C\left( [0,\infty), P_{\alpha'}(\mathbb{R}^{3}) \right)$ with $ 0 < \alpha' < \alpha $ is a measure-valued solution obtained in Theorem \ref{local} for the initial datum $ F_{0} \in P_{\alpha}(\mathbb{R}^{3}) $, then there exists a constant $ C_{e}(T) > 0 $ independent of $ n $ such that, for any $ T > 0 $,
	\begin{equation}\label{infinitees}
	\sup_{t \in [0,T]} \int_{\bR^{3}} \left\langle v \right\rangle^{\alpha} \,\rd F^{n}_{t}(v) \leq C_{e}(T) \int_{\bR^{3}} \left\langle v \right\rangle^{\alpha} \,\rd F_{0}(v).
	\end{equation}	
\end{proposition}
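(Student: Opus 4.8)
The plan is to derive a Gronwall estimate for the $\alpha$--moment $m^{n}_{\alpha}(t):=\int_{\bR^{3}}\wv^{\alpha}\,\rd F^{n}_{t}(v)$ which is \emph{uniform in} $n$, and then take the supremum over $t\in[0,T]$. Since each $F^{n}_{t}$ solves the cutoff problem \eqref{eqpcuto} with $b_{n}$ and $\Phi_{n}$ bounded, Corollary \ref{DissCutoff} (applied with $\alpha_{0}=\alpha$) already shows that $m^{n}_{\alpha}(t)$ is finite and locally bounded in $t$ for each fixed $n$; the content of the proposition is to make the bound independent of $n$. As $\wv^{\alpha}\in C^{2}_{b}(\bR^{3})$ for $0<\alpha\le 2$, I take $\psi(v)=\wv^{\alpha}$ in the weak (measure--valued) formulation of \eqref{eqpcuto}, which gives
\begin{equation*}
m^{n}_{\alpha}(t)=m^{n}_{\alpha}(0)+\frac{1}{2}\int_{0}^{t}\int_{\bR^{3}}\int_{\bR^{3}}L^{e}_{B_{n}}[\wv^{\alpha}](v,\vs)\,\rd F^{n}_{\tau}(v)\,\rd F^{n}_{\tau}(\vs)\,\rd\tau,\qquad B_{n}=b_{n}(\sigma\cdot\hat{q})\,|v-\vs|^{\gamma}\phi_{n}(|v-\vs|),
\end{equation*}
so everything reduces to an $n$--uniform upper bound on $\int\!\!\int L^{e}_{B_{n}}[\wv^{\alpha}]\,\rd F^{n}_{\tau}\,\rd F^{n}_{\tau}$.

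The heart of the proof is a Povzner--Bobylev type inequality for $L^{e}_{B_{n}}$ that is uniform in $n$, in the spirit of the elastic treatment of \cite{morimoto2016measure}. As in Proposition \ref{finitePro} and Theorem \ref{main1}(i), the spherical cancellation \eqref{sym1}--\eqref{sym2} lets one replace $\psi(\vs')+\psi(v')-\psi(\vs)-\psi(v)$ inside $\int_{\Sd^{2}}b_{n}(\sigma\cdot\hat{q})\,(\cdot)\,\rd\sigma$ by $\triangle\psi^{e}$ of \eqref{triangle}, which satisfies both the genuine second--order bound $|\triangle\psi^{e}|\lesssim\|D^{2}\wv^{\alpha}\|_{L^{\infty}}\,|v-\vs|^{2}\sin^{2}(\theta/2)$ and the crude bound $|\triangle\psi^{e}|\lesssim\wv^{\alpha}+\langle\vs\rangle^{\alpha}$ (the latter from $|v'|^{2}\le|v|^{2}+|\vs|^{2}$ and $|\nabla\wv^{\alpha}|\lesssim\wv^{\alpha-1}$). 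On $\{|v-\vs|\le1\}$ the second--order bound already gives $\int_{\Sd^{2}}b_{n}|\triangle\psi^{e}|\,\rd\sigma\lesssim|v-\vs|^{2}$ uniformly in $n$ (by \eqref{noncutoffb} with $\alpha_{0}=2$), so, since $\gamma\ge-2$, $|L^{e}_{B_{n}}[\wv^{\alpha}]|\lesssim|v-\vs|^{2+\gamma}\le1$ there --- a harmless contribution; this is the only place where $-2\le\gamma$ and the grazing structure $|v'-v|\lesssim|v-\vs|\theta$ enter, to tame the diagonal singularity of $|v-\vs|^{\gamma}$. On $\{|v-\vs|>1\}$ one splits the $\sigma$--integral at an angle $\theta_{1}=\theta_{1}(v,\vs)$, uses the second--order bound (sharpened by the Hessian decay of $\wv^{\alpha}$ along the collision segments) for $\theta<\theta_{1}$ and the crude bound together with $\int_{\theta_{1}}^{\pi/2}b_{n}\sin\theta\,\rd\theta\lesssim\theta_{1}^{-2s}$ for $\theta>\theta_{1}$ --- both uniformly in $n$ via \eqref{noncutoffb} --- and optimizes over $\theta_{1}$; after inserting $|v-\vs|^{\gamma}\phi_{n}\le|v-\vs|^{\gamma}$ and integrating against $\rd F^{n}_{\tau}\,\rd F^{n}_{\tau}$ (with $\int\rd F^{n}_{\tau}=1$ and the Jensen bound $\int\wv^{r}\,\rd F^{n}_{\tau}\le(m^{n}_{\alpha}(\tau))^{r/\alpha}$, $0\le r\le\alpha$), one gets a bound of the form $C_{e}\,(m^{n}_{\alpha}(\tau)+1)$ with $C_{e}$ independent of $n$. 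The precise thresholds in this last optimization --- balancing the angular singularity of order $s$, the kinetic weight $\gamma$ and the moment order $\alpha$ --- are exactly what force $\alpha>c_{\gamma,s}$, the three competing regimes ($s<\tfrac12$; $\tfrac12\le s<1$ with $\gamma+2s<1$; and $\gamma+2s\ge1$) producing the three branches in the definition of $c_{\gamma,s}$.

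With this in hand, the proof finishes by a one--line Gronwall argument: $m^{n}_{\alpha}(t)\le m^{n}_{\alpha}(0)+C_{e}\int_{0}^{t}(m^{n}_{\alpha}(\tau)+1)\,\rd\tau$ with $C_{e}$ independent of $n$, and since $m^{n}_{\alpha}$ is a priori locally bounded, $m^{n}_{\alpha}(t)\le e^{C_{e}t}(m^{n}_{\alpha}(0)+1)$ for $t\le T$; using $m^{n}_{\alpha}(0)=\int_{\bR^{3}}\wv^{\alpha}\,\rd F_{0}\ge1$ this is $\le 2e^{C_{e}T}\int_{\bR^{3}}\wv^{\alpha}\,\rd F_{0}$, so $C_{e}(T):=2e^{C_{e}T}$ works. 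I expect the main obstacle to be the Povzner--Bobylev estimate with sharp exponents: one must squeeze the optimal gain out of the interplay between the non--integrable angular singularity, the singular soft weight $|v-\vs|^{\gamma}$ and the low moment order $\alpha<2$ (this is precisely why $c_{\gamma,s}$ breaks into three cases), all while keeping every constant independent of $n$ --- the latter resting entirely on \eqref{noncutoffb} and on the grazing smallness $|v'-v|\lesssim|v-\vs|\theta$, which together with $\gamma\ge-2$ absorb the diagonal singularity, so that the cutoff $\phi_{n}$ is used only through $0\le\phi_{n}\le1$.
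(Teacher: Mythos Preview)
Your overall architecture --- test with $\psi=\wv^{\alpha}$, bound $L^{e}_{B_{n}}[\wv^{\alpha}]$ uniformly in $n$, then close by Gronwall --- is exactly the paper's. The difference is in how the pointwise $\sigma$--integral is controlled on $\{|v-\vs|>1\}$, and here your plan diverges from the paper in a way that leaves a gap.

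The paper does \emph{not} optimize an angle splitting between your second--order bound and your crude bound. Its workhorse is the \emph{first--order} Taylor estimate exploiting the gradient decay $|\nabla\wv^{\alpha}|\lesssim\wv^{\alpha-1}$, namely $|\psi(v')-\psi(v)|\lesssim\theta\,|v-\vs|\,\wv^{(\alpha-1)^{+}}$. For $0<s<\tfrac12$ the paper works with the raw $\triangle\psi_{\alpha}$ (no spherical symmetrization) and interpolates this first--order bound against the crude one, obtaining $|\triangle\psi_{\alpha}|\lesssim\theta^{\delta}|v-\vs|^{\delta}(\wv+\langle\vs\rangle)^{(1-\delta)\alpha+\delta(\alpha-1)^{+}}$; the admissible $\delta>2s$ then forces $\alpha>\max\{\gamma/(2s)+1,0\}$. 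For $s\ge\tfrac12$ the paper uses the symmetrized $\triangle\psi^{e}_{\alpha}$ and interpolates the first--order bound against the second--order one, getting $|\triangle\psi^{e}_{\alpha}|\lesssim\theta^{1+\delta}|v-\vs|^{1+\delta}(\wv^{(\alpha-1)^{+}}+\langle\vs\rangle^{(\alpha-1)^{+}})^{1-\delta}$; the explicit choices $\delta=1$ on $\{|v-\vs|<1\}$ and $\delta=2s-1+\epsilon$ on $\{|v-\vs|\ge1\}$ produce the remaining two branches of $c_{\gamma,s}$.

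Your scheme never invokes this first--order gradient estimate; the only sharpening you allow yourself is ``Hessian decay along the collision segments'', which you do not carry out. If one runs your angle--splitting with just $|\triangle\psi^{e}|\lesssim|v-\vs|^{2}\theta^{2}$ versus $|\triangle\psi^{e}|\lesssim\wv^{\alpha}+\langle\vs\rangle^{\alpha}$ and optimizes in $\theta_{1}$, the resulting exponent balance does not, in general, reproduce $c_{\gamma,s}$ --- in particular the $(\alpha-1)^{+}$ that drives the third branch $\gamma/(2s-1)+2$ never appears. To salvage your route you would need to make the Hessian--decay sharpening precise (effectively recovering the $\wv^{\alpha-2}$ weight in the Taylor remainder) and redo the optimization; this is possible but is essentially a repackaging of the paper's interpolation. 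As written, the Povzner--type step is a plan rather than a proof, and the sharp threshold is asserted rather than derived.
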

\begin{proof}
	By once again applying the weighted function $ W_{\delta} $ in the Corollary \ref{DissiLemma}, we have, for any fixed $ n $ and the solution $ F_{t}^{n} $ obtained in Theorem \ref{local},
	\begin{equation}
	\int_{\bR^{3}} \left\langle v \right\rangle^{\alpha} \,\rd F^{n}_{t}(v) < \infty.
	\end{equation}
	\textit{Step 1: For the $ 0 \leq s < 1/2 $ case.} We first simply denote $ \triangle \psi_{\alpha}: = \psi_{\alpha}(v'_{*}) + \psi_{\alpha}(v')  - \psi_{\alpha}(v_{*}) - \psi_{\alpha}(v) $ with $ \psi_{\alpha}(v) = \left\langle v \right\rangle^{\alpha} $, then by applying the mean value theorem to $ \psi_{\alpha}(v'_{*}) - \psi_{\alpha}(v_{*}) $ and $ \psi_{\alpha}(v') - \psi_{\alpha}(v) $, the following delicate estimate holds,
	\begin{equation}
	\left| \triangle \psi_{\alpha} \right| = O \left( a_{+} \theta |v-v_{*}|^{2} \right) = O \left( a_{+} \theta \left|v-v_{*}\right| \left[ \left\langle v \right\rangle^{(\alpha-1)^{+}} + \left\langle v_{*} \right\rangle^{(\alpha-1)^{+}} \right] \right)
	\end{equation}
	where $ \alpha^{+} := \max\{\alpha,0\} $. It further yields that, for any $ \delta \in (0,1] $,
	\begin{equation}
	\left| \triangle \psi_{\alpha} \right| \lesssim a^{\delta}_{+} \theta^{\delta} \left|v-v_{*}\right|^{\delta} \left( \left\langle v \right\rangle + \left\langle v_{*} \right\rangle \right)^{(1-\delta)\alpha + \delta (\alpha-1)^{+}}.
	\end{equation}
	Thus, the desired estimate \eqref{infinitees} can be directly derived by substituting $ \psi(v) = \left\langle v \right\rangle^{\alpha} $ into the definition of measure-valued solution \eqref{weakmeasure}, as long as $ \alpha > \max\{\frac{\gamma}{2s} + 1, 0\} $.\\
	\textit{Step 2: For the $ s \geq 1/2 $ case.} Letting $ \psi_{\alpha}(v) = \left\langle v \right\rangle^{\alpha} $ again, then we have
	\begin{equation}
	\begin{split}
	&\frac{\rd}{\rd t} \int_{\bR^{3}} \psi_{\alpha}(v) \,\rd F_{t}^{n}(v)\\
	=& \frac{1}{2} \int_{\bR^{3}}\int_{\bR^{3}} \int_{\Sd^{2}} B_{n} \triangle \psi^{e}_{\alpha}(v) \,\rd\sigma \,\rd F_{t}^{n}(v) \,\rd F_{t}^{n}(v_{*})\\
	=& \underbrace{ \frac{1}{2} \int \int_{|v-v_{*}|<1} \int_{\Sd^{2}} B_{n} \triangle \psi^{e}_{\alpha}(v) \,\rd\sigma \,\rd F_{t}^{n}(v) \,\rd F_{t}^{n}(v_{*}) }_{(I)}\\
	&\qquad \qquad \qquad \qquad \qquad \qquad  + \underbrace{ \frac{1}{2} \int \int_{|v-v_{*}| \geq 1} \int_{\Sd^{2}} B_{n} \triangle \psi^{e}_{\alpha}(v) \,\rd\sigma \,\rd F_{t}^{n}(v) \,\rd F_{t}^{n}(v_{*}) }_{(II)}
	\end{split}
	\end{equation}
	where $ B_{n} $ has the same definition of \eqref{symmweak}, and $ \triangle \psi^{e}_{\alpha} $ is similarly defined as $ \triangle \psi^{e} $ in \eqref{triangle} by selecting $ \psi(v) = \left\langle v \right\rangle^{\alpha}  $; since $ \left\langle v \right\rangle^{\alpha} \in C_{b}^{2}(\mathbb{R}^{3}) $, it yields the same estimate $ \left| \triangle \psi^{e}_{\alpha} \right| \lesssim  \left| v-v_{*} \right|^{2} \theta^{2} $ as \eqref{trianglees}. On the other hand, if applying the mean value theorem to $ \psi_{\alpha}(v'_{*}) - \psi_{\alpha}(v_{*}) $ and $ \psi_{\alpha}(v') - \psi_{\alpha}(v) $, we find, 
	\begin{equation}
	\left| \triangle \psi^{e}_{\alpha} \right| \lesssim \theta \left|v-v_{*}\right| \left( \left\langle v \right\rangle^{(\alpha-1)^{+}} + \left\langle v_{*} \right\rangle^{(\alpha-1)^{+}} \right)
	\end{equation}
	which yields another estimate of $ \left| \triangle \psi^{e}_{\alpha} \right| $ that, for any $ \delta \in (0,1] $,
	\begin{equation}\label{tri1}
	\left| \triangle \psi^{e}_{\alpha} \right| \lesssim \theta^{1+\delta} \left|v-v_{*}\right|^{1+\delta} \left( \left\langle v \right\rangle^{(\alpha-1)^{+}} + \left\langle v_{*} \right\rangle^{(\alpha-1)^{+}} \right)^{1-\delta}.
	\end{equation}
	(i) When $ \gamma + 2s < 1 $, we can choose an $ \epsilon > 0 $ such that $ \gamma+2s<1 -\epsilon $ if $ \alpha > 1 $ such that the desired estimate \eqref{infinitees} works from \eqref{tri1}; while, if $ \gamma+2s < \alpha \leq 1 $, we just choose another $ \epsilon > 0 $ such that $ \alpha > \gamma+2s + \epsilon $. In summary, we need to select $ \alpha > \max\{\gamma+2s, 0\} $ in this case.\\
	(ii) When $ \gamma + 2s \geq 1 $, it suffices to choose an $ \epsilon > 0 $ such that $ \alpha > \gamma / (2s -1 + \epsilon) > c_{\gamma, s} > 1 $, as well as an $ \delta = 2s-1+\epsilon $ in the estimate \eqref{tri1} for $ (II) $; on the other hand, by selecting $ \delta = 1 $ in the estimate \eqref{tri1} for $ (I) $, we are able to find that there exists a constant $ C_{e} > 0 $ independent of $ n $ such that 
	\begin{equation}
	\frac{\rd}{\rd t} \int_{\bR^{3}} \psi_{\alpha}(v) \,\rd F_{t}^{n}(v) \leq C_{e} \int_{\bR^{3}} \psi_{\alpha}(v) \,\rd F_{t}^{n}(v)
	\end{equation}
	which leads to estimate \eqref{infinitees} by directly applying Gronwall's inequality.
\end{proof}

\subsubsection{Proof of Theorem \ref{main1} (ii)}
In this subsection, the proof of the main Theorem \ref{main1} (ii) in the case of infinite energy initial datum is presented. In fact, the taking limit process can be guaranteed in the infinite energy case due to the same Proposition \ref{infinitePro}, and it only remains to show the limit function $ F_{t} \in C\left( [0,\infty); P_{\alpha}(\mathbb{R}^{3}) \right) $ is definitely the measure-valued solution.
\begin{proof}
Starting from the Proposition \ref{infinitePro} above, we find $ F_{t}^{n} \in P_{\alpha}(\mathbb{R}^{3}) $ with $ c_{\gamma,s} < \alpha <2 $; further noting the fact that $ v_{j}, j=1,2,3 $ is a collision invariant for any $ \alpha \geq 1 $. Then, similar to the elastic case \cite[(3.16)]{morimoto2016measure}, by recalling that $ \p^{n}(t, \xi) = \mathcal{F} \left( F_{t}^{n} \right) $ and the subsequence of $ \left\lbrace \p^{n}(t, \xi) \right\rbrace^{\infty}_{n=1}  $ converges to $ \p(t, \xi) = \mathcal{F} \left( F_{t} \right) $ uniformly in every compact subset of $ \mathbb{R} \times [0,\infty) $, it yields that, for any $ t\in (0,T] $, there exists a constant $ c_{\alpha} $ such that
\begin{equation}
\begin{split}
c_{\alpha} \int_{\bR^{3}} |v|^{\alpha} \,\rd F_{t}^{n}(v)  = & \lim\limits_{\delta \rightarrow 0^{+}} \int_{\delta \leq |\xi| \leq \delta^{-1}} \frac{\left| 1- \text{Re} \p(t, \xi)\right|}{|\xi|^{3 + \alpha}} \,\rd \xi\\
= & \lim\limits_{\delta \rightarrow 0^{+}} \lim\limits_{n \rightarrow \infty} \int_{\delta \leq |\xi| \leq \delta^{-1}} \frac{\left| 1- \text{Re} \p^{n}(t, \xi)\right|}{|\xi|^{3 + \alpha}} \,\rd \xi \\
\leq & C_{e}(T) c_{\alpha} \int_{\bR^{3}} \left\langle v \right\rangle^{\alpha} \,\rd F_{0}(v),
\end{split}
\end{equation}
from which we can find that Corollary \ref{DissiLemma} (i) still holds if replacing the weights' parameter $ 2 $  by $ c_{\gamma,s} < \alpha <2 $. Thus, for any $ 0 < \alpha' < \alpha $, we prove $ F_{t} \in C\left( [0,\infty); P_{\alpha'}(\mathbb{R}^{3}) \right) $ is a measure-valued solution to the equation \eqref{IB} -\eqref{F0} by the same steps in the finite energy case.\\
Furthermore, since $ F_{t} $ belongs to the space $ P_{\alpha}(\mathbb{R}^{3}) $ uniformly with respect to $ t $ in any finite time interval of $ t\in [0,\infty) $, we then obtain that
\begin{equation}
\left| \int_{\bR^{3}} \left\langle v \right\rangle^{\alpha} \,\rd F_{t}(v) - \int_{\bR^{3}} \left\langle v \right\rangle^{\alpha} \,\rd F_{s}(v)   \right| = O\left( |t-s| \right)
\end{equation}
by applying the same estimation for $ \triangle \psi_{\alpha} $ and $ \triangle \psi^{e}_{\alpha} $ used in Proposition \ref{infinitePro} above, which implies that $ F_{t} \in C\left( [0,\infty); P_{\alpha}(\mathbb{R}^{3}) \right) $. And this completes the proof of Theorem \ref{main1} (ii) in the case of infinite energy initial datum.
\end{proof}



\begin{appendices}
\setcounter{equation}{0}
\renewcommand{\theequation}{A.\arabic{equation}}
\renewcommand{\thesubsection}{A.\arabic{subsection}}	
	
	\subsection{Fourier Transformation of the inelastic Boltzmann operator with soft potential}\label{appen1}
	We take the gain term $ Q^{+}_{e}(g,f) $ as example, since the loss term $ Q^{-}_{e}(g,f) $  is the same as the elastic case. Considering the pre-post velocity relation \eqref{ve} in inelastic case, we have
	\begin{equation}
	\begin{split}
	&\mathcal{F}\left[ Q^{+}_{e}(g,f) \right](\xi) \\
	=& \int_{\bR^{6}} \int_{\Sd^{2}} g(\vs) f(v) b\left(\frac{v-\vs}{|v-\vs|}\cdot \sigma\right) \Phi_{c}\left(|v-v_{*}|\right) \e^{ -i \left(\frac{v+\vs}{2} + \frac{1-e}{4}(v-\vs) + \frac{1+e}{4}|v-\vs|\sigma\right)\cdot \xi}  \rd \sigma \rd \vs \rd v\\
	=& \int_{\bR^{6}} \int_{\Sd^{2}} g(\vs) f(v) b\left(\frac{v-\vs}{|v-\vs|}\cdot \sigma\right)\Phi_{c}\left(|v-v_{*}|\right) \e^{ -i\frac{v+\vs}{2}\cdot\xi}\e^{-i \left(\frac{1-e}{4}(v-\vs) + \frac{1+e}{4}|v-\vs|\sigma\right)\cdot\xi} \rd \sigma \rd \vs \rd v 
	\end{split}
	\end{equation}
	according to the general change of variable,
	\begin{equation}
	\int_{\Sd^{2}} F(k\cdot\sigma, l\cdot\sigma) \rd \sigma = \int_{\Sd^{2}} F(l\cdot\sigma, k\cdot\sigma) \rd \sigma, \ |l| = |k| = 1 
	\end{equation}
	due to the existence of an isometry on $ \Sd^{2} $ exchanging $ l $ and $ k $, we have, by exchanging the rule of $ \frac{\xi}{|\xi|} $ and $ \frac{v-\vs}{|v-\vs|} $
	\begin{equation}
	\begin{split}
	\int_{\Sd^{2}} g(\vs) f(v) b\left(\frac{v-\vs}{|v-\vs|}\cdot \sigma\right)\Phi_{c}\left(|v-v_{*}|\right) \e^{-i \left(\frac{1-e}{4}(v-\vs) + \frac{1+e}{4}|v-\vs|\sigma\right)\cdot\xi}  \rd \sigma \\
	= \int_{\Sd^{2}} g(\vs) f(v) b\left(\frac{\xi}{|\xi|}\cdot \sigma\right) \int_{\bR^{3}} \hat{\Phi}_{c}\left(\zeta\right) \e^{i\zeta\cdot(v-\vs)} \e^{-i \left(\frac{1-e}{4}\xi + \frac{1+e}{4}|\xi|\sigma\right)\cdot(v-\vs)} \rd\zeta \rd \sigma
	\end{split}
	\end{equation}
	where the $ \hat{\Phi}_{c}\left(\zeta\right) = \mathcal{F}\left[ \Phi_{c} \right] $. Thus, 
	\begin{equation}
	\begin{split}
	&\mathcal{F}\left[ Q^{+}_{e}(g,f) \right](\xi) \\
	=&\int_{\bR^{6}} \int_{\Sd^{2}} g(\vs) f(v) b\left(\frac{v-\vs}{|v-\vs|}\cdot \sigma\right) \Phi_{c}\left(|v-v_{*}|\right) \e^{ -i\frac{v+\vs}{2}\cdot\xi}\e^{-i \left(\frac{1-e}{4}(v-\vs) + \frac{1+e}{4}|v-\vs|\sigma\right)\cdot\xi} \rd \sigma \rd \vs \rd v \\
	=& \int_{\bR^{6}} \int_{\Sd^{2}} b\left(\frac{\xi}{|\xi|}\cdot \sigma\right) \int_{\bR^{3}} \hat{\Phi}_{c}\left(\zeta\right) \e^{i\zeta\cdot(v-\vs)}  g(\vs) f(v)  \e^{ -i\frac{v+\vs}{2}\cdot\xi}
	\e^{-i \left(\frac{1-e}{4}\xi + \frac{1+e}{4}|\xi|\sigma\right)\cdot(v-\vs)} \rd\zeta \rd \sigma \rd \vs \rd v\\
	=& \int_{\bR^{6}}\int_{\Sd^{2}} b\left(\frac{\xi}{|\xi|}\cdot \sigma\right)\int_{\bR^{3}} \hat{\Phi}_{c}(\zeta) g(\vs) f(v)  \e^{ -iv\cdot\left( \frac{\xi}{2} + \frac{1-e}{4}\xi + \frac{1+e}{4}|\xi|\sigma - \zeta \right)} \e^{-i \vs \cdot\left( \frac{\xi}{2} - \frac{1-e}{4}\xi - \frac{1+e}{4}|\xi|\sigma + \zeta \right) } \rd\zeta \rd \sigma \rd \vs \rd v\\
	=& \int_{\Sd^{2}} b\left(\frac{\xi}{|\xi|}\cdot \sigma\right) \int_{\bR^{3}} \hat{\Phi}_{c}(\zeta) \hat{f}(\xie^{+} - \zeta) \hat{g}(\xie^{-}+ \zeta)  \rd\zeta \rd \sigma
	\end{split}
	\end{equation}
	where, unlike the elastic case, the $ \xi^{+} $ and $ \xi^{-} $ are defined as
	\begin{equation}
	\xie^{+} = \frac{\xi}{2} + \frac{1-e}{4}\xi + \frac{1+e}{4}|\xi|\sigma, \quad
	\xie^{-} = \frac{\xi}{2} - \frac{1-e}{4}\xi - \frac{1+e}{4}|\xi|\sigma.
	\end{equation}

	\subsection{Proof of Lemma \ref{DissiLemma}}\label{appen2}
	\begin{proof}
	For (i), considering the definition of the Fourier transform of the probability measure $ F^{n}_{t} $, which is the measure-valued solution to the cutoff equation \eqref{eqpcuto} obtained by Theorem \ref{local}, that
	\begin{equation}
	\frac{\p^{n}(\xi) - 1}{|\xi|^{\alpha}} \leq \int_{\bR^{3}} \frac{|\e^{-iv\cdot\xi} - 1|}{|\xi|^{\alpha}} \,\rd F^{n}_{t}(v), 
	\end{equation}
	together with \cite[Lemma 3.15]{cannone2010infinite}, we have
	\begin{equation}
	\|\p^{n} - 1\|_{2} \lesssim \int_{\bR^{3}} |v|^{2}\,\rd F^{n}_{t}(v)
	\end{equation}
	as a consequent, for any $ \delta \in (0,1) $, it follows that
	\begin{equation}
	\sup_{\delta < |\xi| < \delta^{-1}} \frac{|\p^{n}(\xi) - 1|}{|\xi|^{2}} \leq \|\p^{n} - 1\|_{2} \lesssim \int_{\bR^{3}} |v|^{2}\,\rd F^{n}_{t}(v) \leq \int_{\bR^{3}} |v|^{2}\,\rd F_{0}(v).
	\end{equation}
	where we utilize the \eqref{energydiss} in the last inequality above.
	By letting $ \delta \rightarrow 0 $, we then obtain,
	\begin{equation}
	\|\p - 1\|_{2} = \lim\limits_{\delta \rightarrow 0} \sup_{\delta < |\xi| < \delta^{-1}} \frac{|\p - 1|}{|\xi|^{2}} \leq \lim\limits_{\delta \rightarrow 0} \lim\limits_{n\rightarrow \infty} \sup_{\delta < |\xi| < \delta^{-1}} \frac{|\p^{n} - 1|}{|\xi|^{2}} \lesssim \int_{\bR^{3}} |v|^{2}\,\rd F_{0}(v).
	\end{equation}
	Due to the fact that $ \mathcal{K}^{2} = \mathcal{F} \left( P_{2}(\mathbb{R}^{3}) \right) $, it suffices to prove that $ \int_{\bR^{3}} |v|^{2}\,\rd F_{t}(v) < \infty $ for any $ t>0 $, $ i.e. $, for any $ \epsilon > 0 $, we are able to find a $ R_{t, \epsilon} > 0 $ large enough such that
	\begin{equation}
	\int_{|v| \geq R_{t,\epsilon}} |v|^{2} \,\rd F_{t}(v) < \epsilon.
	\end{equation}
	With help of the convergent result \eqref{weakconF}, we can further obtain
	\begin{equation}
	\int_{\bR^{3}} |v|^{2} \,\rd F_{t}(v) \leq \lim\limits_{n\rightarrow \infty} \int_{\bR^{3}} |v|^{2} \mathbf{1}_{|v|\leq R_{t, \epsilon}} \,\rd F^{n}_{t}(v) + \epsilon \leq \int_{\bR^{3}} |v|^{2}\,\rd F_{0}(v) + \epsilon.
	\end{equation}
	Finally we can prove \eqref{dissFt} by letting $ \epsilon \rightarrow 0 $.
	
	For (ii), for any test function $ \psi (v) \in C(\mathbb{R}^{3}) $ satisfying the growing condition $ |\psi(v) | \leq \left\langle v \right\rangle^{l}$ with $ 0 < l < 2 $, we have, for any $ R > 0 $,
	\begin{equation}
	R^{2-l} \int_{|v|\geq R} |\psi(v)| \,\rd F_{t}^{n}(v) \lesssim \int_{|v|\geq R} \left\langle v \right\rangle^{2}  \,\rd F_{t}^{n}(v) \leq \int_{\bR^{3}} \left\langle v \right\rangle^{2}  \,\rd F_{t}^{n}(v) \leq \int_{\bR^{3}} \left\langle v \right\rangle^{2}  \,\rd F_{0}(v)
	\end{equation}
	where the the limit $ F_{t}(v) $ satisfies the estimate above as well.
	Then, for any $ \epsilon > 0 $, we can find $ R_{1}(\epsilon) > 0 $ and $ R_{2}(\epsilon) > 0 $ big enough such that
	\begin{equation}
	\int_{|v|\geq R_{1}(\epsilon)} |\psi(v)| \,\rd F_{t}^{n}(v) \leq \epsilon \quad \text{and} \quad \int_{|v|\geq R_{2}(\epsilon)} |\psi(v)| \,\rd F_{t}(v) \leq \epsilon
	\end{equation}
	Hence, after selecting $ R_{\epsilon} = \max\{ R_{1}(\epsilon), R_{2}(\epsilon) \} $, we can obtain that
	\begin{equation}
	\int_{|v|\geq R_{1}(\epsilon)} |\psi(v)| \,\rd F_{t}^{n}(v) + \int_{|v|\geq R_{2}(\epsilon)} |\psi(v)| \,\rd F_{t}(v) \leq 2\epsilon.
	\end{equation}
	Let $ \chi_{R_{\epsilon}} (v) \in C_{c}^{\infty}(\mathbb{R}^{3}) $ be a indicator function that
	\begin{equation}\label{chiR}
	\chi_{R_{\epsilon}} (v) := \begin{cases}
	1,& \text{if}\ |v| \leq R_{\epsilon} \\
	0,& \text{if}\ |v| > R_{\epsilon}
	\end{cases}
	\end{equation}
	and $ \psi_{\epsilon} \in C_{c}^{\infty}(\mathbb{R}^{3}) $ be such that
	\begin{equation}
	\| \psi_{\epsilon} - \chi_{R_{\epsilon}} \psi_{\epsilon} \| < \epsilon
	\end{equation}
	Considering that $ \widehat{\psi_{\epsilon}} = \mathcal{F}(\psi_{\epsilon}) \in \mathcal{S}(\mathbb{R}^{3}) $, where $ \mathcal{S} $ is the classical Schwartz space, we have 
	\begin{equation}
	\begin{split}
	\left| \int_{\bR^{3}} \chi_{R_{\epsilon}} \psi_{\epsilon} \,\rd F^{n}_{t} - \int_{\bR^{3}} \chi_{R_{\epsilon}} \psi_{\epsilon} \,\rd F_{t}  \right| \leq & \left| \int_{\bR^{3}} \psi_{\epsilon} \,\rd F^{n}_{t} - \int_{\bR^{3}} \psi_{\epsilon} \,\rd F_{t}  \right| + 2\epsilon \\
	= & \left(2\pi\right)^{3} \left| \int_{\bR^{3}} \overline{\widehat{\psi_{\epsilon}}(\xi)} \left[  \p^{n}(t,\xi) - \p(t,\xi)  \right] \right| + 2\epsilon\\
	\lesssim & \frac{1}{M} \int_{|\xi|\geq M} \left\langle \xi \right\rangle \overline{\widehat{\psi_{\epsilon}}(\xi)} \,\rd \xi + \sup_{\xi\leq M} \left| \p^{n}(t,\xi) - \p(t,\xi) \right| + \epsilon
	\end{split}
	\end{equation}
	for any $ M >0 $. This completes the proof of \eqref{limFt1}.
	
	For (iii), we denote, for any $ R >0 $, 
	\begin{equation}
	K_{R} = \left\lbrace (v,v_{*}) : |v|\leq R, |v_{*}|\leq R \right\rbrace 
	\end{equation}
	which is a compact set of $ \mathbb{R}^{3} \times \mathbb{R}^{3} $ and $ K^{c}_{R} $ is the complement set of $ K_{R} $. Then, we have the following estimate of the cutoff solution $ F_{t}^{n} $ in $ K^{c}_{R} $,
	\begin{equation}
	\begin{split}
	\int\int_{K^{c}_{R}} \left| \Psi(v,v_{*}) \right| \,\rd F_{t}^{n}(v) \,\rd F_{t}^{n}(v_{*}) \lesssim & \frac{1}{\left\langle R \right\rangle^{2-l} } \int\int_{K^{c}_{R}} \left\langle v \right\rangle^{2} \left\langle v_{*} \right\rangle^{2}  \,\rd F_{t}^{n}(v) \,\rd F_{t}^{n}(v_{*})\\
	\leq  & \frac{1}{\left\langle R \right\rangle^{2-l} } \left(  \int_{\bR^{3}} \left\langle v \right\rangle^{2} \,\rd F_{t}^{n}(v)  \right)^{2}\\
	\leq  & \frac{1}{\left\langle R \right\rangle^{2-l} } \left(  \int_{\bR^{3}} \left\langle v \right\rangle^{2} \,\rd F_{0}(v)  \right)^{2}
	\end{split}
	\end{equation}
	similar to the technique in the proof of (ii) above, it follows that, we can find $ R_{\epsilon} > 0 $ large enough, such that
	\begin{equation}
	\int\int_{K^{c}_{R_{\epsilon}}} \left| \Psi(v,v_{*}) \right| \,\rd F_{t}^{n}(v) \,\rd F_{t}^{n}(v_{*}) + \int\int_{K^{c}_{R_{\epsilon}}} \left| \Psi(v,v_{*}) \right| \,\rd F_{t}(v) \,\rd F_{t}(v_{*}) \leq \epsilon .
	\end{equation}
	As a result, to prove \eqref{limFt2}, it remains to show that, in $ K^{c}_{R} $,
	\begin{equation}
	\lim\limits_{n\rightarrow \infty} \int\int_{ K^{c}_{R} } \Psi (v,\vs) \,\rd F_{t}^{n}(v)\,\rd F_{t}^{n}(\vs) = \int\int_{ K^{c}_{R} } \Psi (v,\vs) \,\rd F_{t}(v) \,\rd F_{t}(\vs)
	\end{equation}
	Recalling the Weierstrass Approximation Theorem applied in \cite[Lemma 3.2(iii)]{morimoto2016measure}, there exists a polynomial function $ \sum_{j=1}^{N} p_{j}(v) \tilde{p}_{j}(v_{*}) $ such that 
	\begin{equation}\label{Weier}
	\sup_{(v,v_{*}) \in K_{R_{\epsilon}}} \left| \Psi(v,v_{*}) - \sum_{j=1}^{N(\epsilon)} p_{j}(v) \tilde{p}_{j}(v_{*}) \right| < \epsilon.
	\end{equation}
	Taking advantage of indicator function $ \chi_{R_{\epsilon}} $ as \eqref{chiR} once again, we have, for each $ p_{j}(v) $ and $ \tilde{p}_{j}(v_{*}) $,
	\begin{equation}
	\lim\limits_{n\rightarrow \infty} \int_{\bR^{3}} p_{j}(v) \chi_{R_{\epsilon}} \,\rd F^{n}_{t}(v) = \int_{\bR^{3}} p_{j}(v) \chi_{R_{\epsilon}} \,\rd F_{t}(v)
	\end{equation}
	and 
	\begin{equation}
	\lim\limits_{n\rightarrow \infty} \int_{\bR^{3}} \tilde{p}_{j}(v_{*}) \chi_{R_{\epsilon}} \,\rd F^{n}_{t}(v_{*}) = \int_{\bR^{3}} \tilde{p}_{j}(v_{*}) \chi_{R_{\epsilon}} \,\rd F_{t}(v_{*})
	\end{equation}
	uniformly for $ t \in [0,T] $, \textit{i.e.}, for any $ \epsilon > 0 $, we can find $ N_{j,\epsilon} $ large enough, such that for all $ n > N_{j,\epsilon} $
	\begin{equation}\label{pn1}
	\left| \int_{|v|\leq R_{\epsilon}} p_{j}(v) \,\rd F^{n}_{t}(v) - \int_{|v|\leq R_{\epsilon}} p_{j}(v) \,\rd F_{t}(v) \right| \leq \frac{\epsilon}{\alpha_{j}N(\epsilon)}
	\end{equation}
	where $ \alpha_{j} = \sup_{|v|\leq R_{\epsilon}} \left| p_{j}(v)\right| $; similarly, we can also find another  $ \tilde{N}_{j,\epsilon} $ large enough, such that for all $ n > \tilde{N}_{j,\epsilon} $
	\begin{equation}\label{pn2}
	\left| \int_{|v_{*}|\leq R_{\epsilon}} \tilde{p}_{j}(v_{*}) \,\rd F^{n}_{t}(v_{*}) - \int_{|v_{*}|\leq R_{\epsilon}}  \tilde{p}_{j}(v_{*}) \,\rd F_{t}(v_{*}) \right| \leq \frac{\epsilon}{\tilde{\alpha}_{j}N(\epsilon)}
	\end{equation}
	where $ \tilde{\alpha}_{j} = \sup_{|v_{*}|\leq R_{\epsilon}} \left| \tilde{p}_{j}(v_{*})\right| $.\\
	Thus, if selecting $ N = \max_{1 \leq j \leq N(\epsilon)} \{ N_{j,\epsilon}, \tilde{N}_{j,\epsilon}  \} $ together with the estimates \eqref{Weier}, \eqref{pn1} and \eqref{pn2}, it yields that, for all $ n > N  $,
	\begin{equation}
	\begin{split}
	&\left| \int\int_{K_{R_{\epsilon}}} \Psi(v,v_{*}) \,\rd F^{n}_{t}(v) \,\rd F^{n}_{t}(v_{*}) - \int\int_{K_{R_{\epsilon}}} \Psi(v,v_{*}) \,\rd F_{t}(v) \,\rd F_{t}(v_{*})  \right| \\
	\lesssim & \sum_{j=1}^{N(\epsilon)} \left| \int\int_{K_{R_{\epsilon}}} p_{j}(v) \tilde{p}_{j}(v_{*}) \,\rd F^{n}_{t}(v) \,\rd F^{n}_{t}(v_{*}) - \int\int_{K_{R_{\epsilon}}} p_{j}(v) \tilde{p}_{j}(v_{*})  \,\rd F_{t}(v) \,\rd F_{t}(v_{*})  \right| + 2\epsilon \\
	\lesssim & \sum_{j=1}^{N(\epsilon)} \left| \int_{|v\leq R_{\epsilon}|}  p_{j}(v) \,\rd F^{n}_{t}(v) \left( \int_{|v_{*}|\leq R_{\epsilon}} \tilde{p}_{j}(v_{*}) \,\rd F^{n}_{t}(v_{*}) - \int_{|v_{*}|\leq R_{\epsilon}} \tilde{p}_{j}(v_{*}) \,\rd F^{n}_{t}(v_{*})  \right) \right|\\
	& + \sum_{j=1}^{N(\epsilon)} \left|  \left( \int_{|v|\leq R_{\epsilon}} p_{j}(v) \,\rd F^{n}_{t}(v) - \int_{|v|\leq R_{\epsilon}} p_{j}(v) \,\rd F_{t}(v)  \right) \int_{|v_{*} \leq R_{\epsilon}|}  \tilde{p}_{j}(v_{*}) \,\rd F_{t}(v_{*}) \right| + 2\epsilon\\
	\lesssim & 3\epsilon
	\end{split}
	\end{equation}
	By letting $ \epsilon \rightarrow 0 $, this will complete the proof of \eqref{limFt2}.
       \end{proof}

       \subsection{Proof of Corollary \ref{momentspro}}\label{appPro}
       \begin{proof}
       	\textit{Step 1: For the case of $ 0 < \kappa \leq 2 $.} We apply the similar strategy as the proof of cutoff solution in Corollary \ref{DissCutoff}. By introducing the weighted function:
       	\begin{equation}
       	\mathcal{W}_{\delta,\kappa}(x) = \frac{(1+x)^{1+\kappa/2}}{1+\delta(1+x)^{\kappa/2}},\quad \delta > 0,
       	\end{equation}
       	together with the dissipation of energy \eqref{dissFt} in Lemma \ref{DissiLemma}, we obtain that
       	\begin{equation}
       	\int_{\bR^{3}} \mathcal{W}_{\delta,\kappa}(|v|^{2}) \,\rd F_{t} < \infty.
       	\end{equation}
       	Furthermore, as the proof of cutoff case, we need to show the convexity of the weighted function $ \mathcal{W}_{\delta} $. Starting from the general form: for all $ n\in \mathbb{N} $ and $ n \geq 1 $,
       	\begin{equation}
       	\mathcal{W}_{\delta,\kappa,n}(x) = \frac{(1+x)^{1+(n\kappa)/2}}{1+\delta(1+x)^{\kappa/2}}
       	\end{equation}
       	it follows from the direct calculation that
       	\begin{equation}
       	\mathcal{W}''_{\delta,\kappa,n}(x) = \frac{(1+x)^{n\kappa/2-1}K}{\left[ 1+\delta(1+x)^{\kappa/2}\right]^{3}}
       	\end{equation}
       	where $ K $ is denoted by 
       	\begin{equation}
       	\begin{split}
       	K :=& \left(1+\frac{n\kappa}{2}\right) \left(\frac{n\kappa}{2} + \left[\delta(1+x)^{\kappa/2}\right] \frac{(n-2)\kappa}{2} \right) \\
       	&+ \left( 1 +\frac{(n-1)\kappa}{2} \right) \left( \frac{(n+1)\kappa}{2} + \frac{(n-1)\kappa}{2}\left[\delta(1+x)^{\kappa/2}\right] \right)\left[\delta(1+x)^{\kappa/2}\right].
       	\end{split}
       	\end{equation} 
       	Then, when $ n=1 $,
       	\begin{equation}
       	0 \leq K = \frac{\kappa}{2}\left(1 + \frac{\kappa}{2} \right) + \frac{\kappa}{2}\left(1 - \frac{\kappa}{2} \right) \left[\delta(1+x)^{\kappa/2}\right] \leq \frac{\kappa}{2}\left(1 + \frac{\kappa}{2} \right) \left[1+\delta(1+x)^{\kappa/2}\right]
       	\end{equation}
       	 when $ n\geq 2 $,
       	 \begin{equation}
       	 \begin{split}
       	 0 \leq K = &\frac{n\kappa}{2}\left(1 + \frac{n\kappa}{2} \right) \left[1+\delta(1+x)^{\kappa/2}\right] \\
       	 &+ \frac{(n+1)\kappa}{2}\left(1 + \frac{(n-1)\kappa}{2} \right)  \left[\delta(1+x)^{\kappa/2}\right] \left[1+\delta(1+x)^{\kappa/2}\right]\\
       	  \leq& \frac{(n+1)\kappa}{2}\left(1 + \frac{(n-1)\kappa}{2} \right)  \left[\delta(1+x)^{\kappa/2}\right].
       	 \end{split}
       	 \end{equation}
       	 Therefore, we obtain the bounded estimate for the second-order derivative the weighted function $ \mathcal{W}_{\delta,\kappa,n}(x) $, that is 
       	 \begin{equation}
       	 0 \leq \mathcal{W}''_{\delta,\kappa,n}(x) \leq C_{n.\kappa} (1+x)^{n\kappa/2 - 1}
       	 \end{equation}
       	 where the upper bound is uniform for $ \delta $.
       	 As a result, by selecting $ \mathcal{W}_{\delta,\kappa}(x) = \mathcal{W}_{\delta,\kappa,1}(x) $, we can get the similar estimate as the cutoff case, that is 
       	 \begin{equation}
       	 \begin{split}
       	 \int_{\bR^{3}} \mathcal{W}_{\delta,\kappa}(|v|^{2}) \,\rd F_{t}(v) \leq& \int_{\bR^{3}} \mathcal{W}_{\delta,\kappa}(|v|^{2}) \,\rd F_{0}(v) + C \int_{\bR^{3}} \left\langle v \right\rangle^{2} \,\rd F_{0}(v) \int_{0}^{t}  \int_{\bR^{3}} \left\langle v \right\rangle^{2} \,\rd F_{\tau}(v) \,\rd \tau\\
       	 \leq & C_{1} + C_{2}t
       	 \end{split}
       	 \end{equation} 
       	 Finally, by letting $ \delta \rightarrow 0 $, we obtain that, for any $ t \in [0,T] $, the following estimate holds:
       	 \begin{equation}
       	 \int_{\bR^{3}} \left\langle v \right\rangle^{2+ \kappa} \,\rd F_{t}(v) \leq C_{1} + C_{2} T.
       	 \end{equation}
       	 \textit{Step 2: For the case of $ \kappa > 2 $.} In this case, we can always find a $ \kappa' \in (0,2) $ and an integer $ N $ such that $ \kappa' N = \kappa $. Then it follows that, for all $ j=1,...,N $,
       	 \begin{equation}
       	 \int_{\bR^{3}} \left\langle v \right\rangle^{2+ j \kappa'} \,\rd F_{0}(v) \leq \int_{\bR^{3}} \left\langle v \right\rangle^{2+ \kappa} \,\rd F_{0}(v)
       	 \end{equation}
       	 Now we are able to take advantage of the argument in Step 1 above to $ \mathcal{W}_{\delta,\kappa',1}(|v|^{2}) $, thus, by letting $ \delta \rightarrow 0 $, we have, for all $ t \in [0,T] $,
       	 \begin{equation}
       	 \int_{\bR^{3}} \left\langle v \right\rangle^{2+\kappa'} \,\rd F_{t}(v) \leq C_{1} + C_{2}T,
       	 \end{equation}
       	 which implies that, 
       	 \begin{equation}
       	 \int_{\bR^{3}} \mathcal{W}_{\delta,\kappa',2}(|v|^{2}) \,\rd F_{t}(v) \leq \infty, \ \text{for all}\ t\in [0,T].
       	 \end{equation}
       	 Moreover, by letting $ 0 < \kappa' < m\kappa' < 2 < (m+1)\kappa' < N\kappa' = \kappa $, we can apply the step 1 above once again to $ \mathcal{W}_{\delta,\kappa',j}(|v|^{2}), j = 1,...,m $ to obtain that, for all $ t \in [0,T] $,
       	 \begin{equation}
       	 \int_{\bR^{3}} \left\langle v \right\rangle^{2+m\kappa'} \,\rd F_{t}(v) \leq C_{1} + C_{2}T.
       	 \end{equation}
       	 Due to the fact that $ (m+1)\kappa' > 2 $, we can further deduce that
       	 \begin{equation}
       	 \begin{split}
       	 &\int_{\bR^{3}} \mathcal{W}_{\delta,\kappa',m+1}(|v|^{2}) \,\rd F_{t}(v)\\
       	 \leq& \int_{\bR^{3}} \mathcal{W}_{\delta,\kappa',m+1}(|v|^{2}) \,\rd F_{0}(v)
       	 + C \int_{\bR^{3}} \left\langle v \right\rangle^{2} \,\rd F_{0}(v) \int_{0}^{t} \int_{\bR^{3}} \left\langle v \right\rangle^{(m+1)\kappa'} \,\rd F_{\tau}(v) \,\rd \tau\\
       	 \leq & \int_{\bR^{3}} \left\langle v \right\rangle^{2+\kappa} \,\rd F_{0}(v) + C \int_{\bR^{3}} \left\langle v \right\rangle^{2} \,\rd F_{0}(v) \int_{0}^{t} \int_{\bR^{3}} \left\langle v \right\rangle^{2+m\kappa'} \,\rd F_{\tau}(v) \,\rd \tau\\
       	 \lesssim & \int_{\bR^{3}} \left\langle v \right\rangle^{2+\kappa} \,\rd F_{0}(v) + C_{\kappa}(t)
       	 \end{split}
       	 \end{equation}
       	 By continuing the estimate above, we finally obtain that, for any $ \kappa > 2 $,
       	 \begin{equation}
       	 \int_{\bR^{3}} \left\langle v \right\rangle^{2+\kappa} \,\rd F_{t}(v) \lesssim \int_{\bR^{3}} \left\langle v \right\rangle^{2+\kappa} \,\rd F_{0}(v) + C_{\kappa}(t)
       	 \end{equation}
       	 where $ C_{\kappa}(t) $ is the generic polynomial function of $ t $ arsing from the repeated calculations. 
       \end{proof}
\end{appendices}

\section*{Acknowledgement}
\label{sec:ack}
The author would like to thank
Professor Tong Yang for his introducing and discussing the related topic.

\bibliographystyle{plain}
\bibliography{Bib_Kunlun_inelastic}

\end{document}